\def\U{\mathcal{U}}
\def\UJ{\U_J}
\def\PP{\mathbb{P}}
\def\bb{\mathbf{b}}
\def\BB{\mathbf{B}}
\def\AA{\mathbf{A}}
\def\EE{\mathbb{E}}
\def\R{\mathbb{R}}
\def\a{\alpha}
\def\b{\beta}
\def\g{\gamma}
\def\e{\epsilon}
\def\l{\lambda}
\def\bul{\scriptscriptstyle \bullet}
\newcommand{\one}[1]{{\mathbbm{1}}\left\{{#1}\right\}}
\def\phat{\widehat{\phi}}
\def\phatj{\phat_J}
\def\pstarj{\phi^*_J}
\newcommand\T{\rule{0pt}{2.6ex}}
\newcommand\B{\rule[-1.2ex]{0pt}{0pt}}
\newtheorem{theorem}{Theorem}
\newtheorem{lemma}{Lemma}
\newtheorem{corollary}{Corollary}
\theoremstyle{definition}
\theoremstyle{remark}
\theoremstyle{theorem}
\newtheorem*{rep@theorem}{\rep@title}
\newcommand{\newreptheorem}[2]{%
\newenvironment{rep#1}[1]{%
 \def\rep@title{#2 \ref{##1}}%
 \begin{rep@theorem}}%
 {\end{rep@theorem}}}
\title[Model selection in sparse generalized linear models]{Bayesian
  model choice and information criteria \linebreak in sparse
  generalized linear models} 
\author{Rina Foygel\and Mathias Drton}
\address{Department of Statistics, The University of Chicago, Chicago,
  IL, U.S.A.}
\email{rina@uchicago.edu}
\email{drton@uchicago.edu}
\begin{document}

\begin{abstract}
  We consider Bayesian model selection in generalized linear models
  that are high-dimensional, with the number of covariates $p$ being
  large relative to the sample size $n$, but sparse in that the number
  of active covariates is small compared to $p$.  Treating the
  covariates as random and adopting an asymptotic scenario in which
  $p$ increases with $n$, we show that Bayesian model selection using
  certain priors on the set of models is asymptotically equivalent to
  selecting a model using an extended Bayesian information criterion.
  Moreover, we prove that the smallest true model is selected by
  either of these methods with probability tending to one.  Having
  addressed random covariates, we are also able to give a consistency
  result for pseudo-likelihood approaches to high-dimensional sparse
  graphical modeling.  Experiments on real data demonstrate good
  performance of the extended Bayesian information criterion for
  regression and for graphical models.
\end{abstract}

\keywords{Bayesian information criterion, Bayesian model selection,
  generalized linear model, graphical model, Ising model, logistic
  regression, sparse regression}

\maketitle

\section{Introduction}
\label{sec:introduction}

Information criteria provide a principled approach to a wide variety
of model selection problems.  The criteria strike a balance between
the fit of a parametric statistical model, measured by the maximized
likelihood function, and its complexity, measured by a penalty term
that involves the dimension of the model's parameter space.  The two
classical criteria are Akaike's information criterion (AIC)
\citep{Akaike:1974}, which targets good predictive performance, and the
Bayesian information criterion (BIC) introduced in
\citet{Schwarz:1978}, which is motivated by a connection to fully
Bayesian approaches to model determination and which has been proven
to enjoy consistency properties in a number of settings.  If we call a
model ``true'' if it contains the underlying data-generating
distribution, then consistency refers to selection of the smallest
true model in a suitable large-sample limit.  In this paper we will be
concerned with the BIC for generalized linear models with random
covariates in a sparse high-dimensional setting, where the number of
covariates is large but only a small fraction of the covariates is
related to the response.  Our main results show that extensions of the
BIC are consistent, and are accurate approximations of Bayesian
procedures, in asymptotic scenarios where the number of covariates
grows with the sample size.  The results can be interpreted either as
giving a precise Bayesian motivation for recently-proposed information
criteria, or as proving that Bayesian procedures enjoy favorable
frequentist properties.  In particular, our work gives uniform error
bounds for Laplace approximations to the large number of marginal
likelihood integrals arising in a Bayesian treatment of sparse
high-dimensional generalized linear models, and results in consistent
model selection for high-dimensional graphical models with binary
variables (the Ising model).

\subsection{Classical theory}
 
Suppose we observe a sample of $n$ observations for which we consider
the parametric model $\mathcal{M}$ with log-likelihood function
$\ell_{[n]}(\theta)$.  Then, written in the most commonly encountered form, the BIC for this model is
\[
\mathrm{BIC}(\mathcal{M})= -2\, \ell_{[n]}(\widehat{\theta}_{\mathcal{M}}) +
\dim(\mathcal{M})\cdot \log(n)\;, 
\]
with a lower value being desirable. Here $\dim(\mathcal{M})$ is the
dimension of the model's parameter space $\Theta_\mathcal{M}$, and
\[
\widehat{\theta}_{\mathcal{M}}=\arg\max_{\theta\in\Theta_\mathcal{M}}
\,\ell_{[n]}(\theta)  
\]
is the maximum likelihood estimator (MLE) in model $\mathcal{M}$.  The
classical large-sample theory underlying the BIC considers a finite
family of competing models that is closed under intersection, and for
which strict inclusion implies strictly lower dimension.  In order to
prove consistency of the BIC, it then suffices to make pairwise
comparisons between models $\mathcal{M}_1$ and $\mathcal{M}_2$ showing
that, with asymptotic probability one,
$\mathrm{BIC}(\mathcal{M}_1)<\mathrm{BIC}(\mathcal{M}_2)$ if either
(i) $\mathcal{M}_1$ is true and $\mathcal{M}_2$ is not, or (ii) both
$\mathcal{M}_1$ and $\mathcal{M}_2$ are true but
$\dim(\mathcal{M}_1)<\dim(\mathcal{M}_2)$.  In case (i), a proof shows
that the difference in the likelihood terms in the BIC outgrows the
logarithmic penalty term as $n\to\infty$, whereas in case (ii) the
logarithmic term outgrows the difference in log-likelihood values,
which remains bounded in probability; compare, for instance,
\citet{Nishii:1984}, \cite{Haughton:1988} or monographs on model selection and
information 
criteria such as \citet{Burnham:2002,Claeskens:2008,Konishi:2008}.

The penalty term appearing in the BIC is only one of many
possible choices to balance model fit and complexity in a way that
leads to consistency.  However, the logarithmic dependence on the
sample size makes a connection to Bayesian approaches.  Consider the
prior $f_\mathcal{M}(\theta)$ for the parameter $\theta$ in model
$\mathcal{M}$, and write $P(\mathcal{M})$ for the prior probability of
$\mathcal{M}$.  Then the posterior probability of $\mathcal{M}$ is
proportional to 
\begin{equation}\label{eq:BayesML_def}
  P(\mathcal{M})\cdot \int_{\theta}  \exp\{\ell_{[n]}(\theta)\}\,
  f_\mathcal{M}(\theta)\;d\theta,
\end{equation}
where the integral is commonly referred to as the marginal likelihood.
In well-behaved models, for large $n$, the integrand in the marginal
likelihood takes large values only in a neighborhood of the MLE
$\widehat{\theta}_{\mathcal{M}}$.  Moreover, in such a neighborhood the
log-likelihood $\ell_{[n]}(\theta)$ can be approximated by a quadratic function,
 while the prior $f_\mathcal{M}(\theta)$
is approximately constant.  Evaluating the
resulting Gaussian integral reveals that the logarithm of the marginal
likelihood equals $-\nicefrac{1}{2}\cdot\mathrm{BIC}(\mathcal{M})$
plus a remainder term that is bounded in probability when the $n$
observations are drawn from a distribution in $\mathcal{M}$ and the
sample size $n$ tends to $\infty$.  The remainder term can be
estimated to be
\[
\frac{1}{2}\dim(\mathcal{M})\log(2\pi)+\log
f_\mathcal{M}(\widehat{\theta}_{\mathcal{M}})-
\frac{1}{2}\log\det\left(\frac{1}{n}H_{[n]}(\widehat\theta_\mathcal{M})\right)
+\mathbf{O}_P(n^{-1/2}),
\]
where $H_{[n]}$ is the Hessian of the negated log-likelihood function (and
scales with $n$).  The work of \citet{Haughton:1988} provides a
rigorous probabilistic treatment of this Laplace approximation to the
marginal likelihood in the general setting of smooth (or curved)
exponential families.  Considering a finite family of models, it
suffices to treat one model at a time in this analysis.

\subsection{Recent extensions of the BIC}
\label{subsec:intro-ebic}

In the last decade, new applications of the BIC have emerged in
problems of selecting sparse models for high-dimensional data,
including problems such as tuning parameter selection in Lasso and
related regularization procedures; see e.g.~\citet{Zou:2007}.
Following a proposal from \citet{Bogdan:2004}, the work of
\citet{Chen:2008} treats an extended BIC for variable selection in
sparse high-dimensional linear regression with deterministic
covariates. The extension allows for a more stringent penalty to
address the (ordinary) BIC's tendency to select overly large models in
this setting.  The asymptotic scenario underlying the theoretical
analysis of the new criterion allows for subexponential growth in the
number of covariates $p$ as a function of the sample size $n$, with a
bound on the number of covariates that appear in the true mean
function.  The main result of \citet{Chen:2008} shows variable
selection consistency under these asymptotics.  \citet{Chen:2011}
extend the results to generalized linear models (GLMs), and further
improvements for linear regression are given in \citet{Luo:2011} and
in \cite{Zhang:2010}.  Consistency in Gaussian graphical models has
been studied by \citet{Foygel:2010} and \citet{Gao:2011}.  Composite
likelihood-based criteria are treated by \citet{Gao:2010}.  The main
difficulty in showing consistency in these high-dimensional settings
is the need to control a diverging number of models.  We remark that a
related study of the ordinary BIC that focuses on pairwise model
comparisons is given by \citet{Moreno:2010}.

In the literature on the extended BIC, the key idea for treating the
high-dimensional setting is to augment the BIC with an informative
prior on models.  In the regression setting, which is our focus in
this paper, the competing models correspond to different subsets of
covariates.  If $p$ denotes the number of covariates, then a model
corresponds to a subset ${J}\subset [p]\coloneqq\{1,\dots,p\}$, and the
extended BIC is based on the prior
\begin{equation}
  \label{eq:p_choose_|J|}
  P(J)=\frac{1}{p+1}{p\choose |J|}^{-1}
\end{equation}
that gives equal probability to each model size, and to each model given
the size.  Clearly, this priors favors an individual small model
over an individual model of moderate size.  More generally, priors
of the form
\begin{equation}
  \label{eq:chen_prior}
  P(J)\propto {p\choose |J|}^{-\gamma}\cdot \one{|J|\leq q}
\end{equation}
with a hyperparameter $\gamma\in[0,1]$ have been considered to allow
one to interpolate between the classical BIC of \citet{Schwarz:1978},
obtained for $\gamma=0$, and the prior in (\ref{eq:p_choose_|J|})
given by $\gamma=1$, while at the same time invoking an upper bound
$q$ on the number of covariates.  Some of our later asymptotic results
suggest that it can be useful to consider $\gamma>1$ if the number of
covariates $p$ is very large.  Priors of this form have also been
shown to be useful in fully Bayesian approaches to high-dimensional
regression; compare \citet{Scott:2010} who motivate this and related
priors in a construction that includes each covariate with a fixed
probability that itself is given a Beta prior.

Writing $J$ for a particular subset of the available covariates,
inclusion of the prior from (\ref{eq:chen_prior}) into the information
criterion yields the \emph{extended BIC} (EBIC)
\[
\mathrm{BIC}_{\g}(J)= -2 \ell_{[n]}(\widehat{\theta}_J) +
|J|\cdot \log(n) +2\g |J|\cdot\log (p)\,. 
\]
Under suitable conditions on the design matrix (ensuring, for
instance, that removing a covariate from the smallest true model will
substantially lower the achievable likelihood), 
\citet{Chen:2008,Chen:2011} show consistency of the EBIC in
both the normal linear regression and the univariate GLM setting with
fixed (i.e., non-random) covariates.  Consistency holds as long as
$\g>1-\frac{1}{2\kappa}$, where $\kappa$ determines the rate of growth
of the number of covariates, and thus the space of possible models,
with $p=\mathbf{O}(n^{\kappa})$.

While our focus is entirely on consistency properties of model
selection procedures for high-dimensional regression, we should
mention that other properties are of interest and have been studied.
For instance, \citet{Shao:1997} considers a similar problem, proving
results about selection of the best predictive model, rather than
consistency in variable selection.  \citet{Jiang:2007} points out that
in applied settings, the question of variable selection is not always
well-defined due to many coefficients that are approximately rather
than exactly zero, and considers the problem of estimating the true
parameter vector under the assumption of approximate sparsity.  This
paper uses a prior on models similar to the one
in~(\ref{eq:p_choose_|J|}).

\subsection{Recent work on Bayesian model selection in
  high-dimensional settings}

Several recent papers have examined the properties of Bayesian model
selection in scenarios where the model size may be large relative to
the sample size.  For instance, a consistency result for Bayesian
linear regression has recently been obtained by \citet{Shang:2011}.
This work focuses on a specific Bayesian model that assumes the
regression coefficients to follow a particular prior distribution that
is a mixture of a normal distribution and a point mass at zero. There
has also been work on the problem of choosing between a pair of
models, including a recent article by \citet{Kundu:2011} that shows
consistency of Bayesian pairwise model comparison under a flexible,
non-parametric noise model. These results are not directly comparable
to the problem discussed here, where we consider the problem of
searching for the smallest true model from among a combinatorially
large set of possible sparse models in a generic regression setting.
 
\subsection{New results}

With a penalty term reflecting a particular type of prior on models,
the EBIC has a clear Bayesian motivation.  However, it is not
immediately clear that the EBIC and fully Bayesian approaches using
the same prior on models should lead to asymptotically equivalent
model choice in a high-dimensional asymptotic scenario that has the
number of covariates $p$ grow with the sample size $n$.  Our first
main result, Theorem~\ref{thm:Bayes=BIC}, addresses this issue for
generalized linear models and shows that such equivalence indeed
occurs at a fairly general level.  More precisely, our result shows
that a Laplace approximation to the marginal likelihood of each one of
a growing number of models results into errors that are, with high
probability, uniformly bounded as
$\mathbf{O}\big(\sqrt{{\log(np)}/{n}}\big)$.

Our second main result, Theorem~\ref{thm:BIC_consistent}, provides a
consistency result for the EBIC.  The result is very closely related
to those of \citet{Chen:2011}.  The primary difference is that we
consider random rather than deterministic covariates and allow for
unbounded covariates, subject to a moment condition, in some special
cases such as logistic regression.  Consistency is proven under the
same conditions that we use to obtain the equivalence of the EBIC and
fully Bayesian model selection.  Combining the two Theorems yields
Corollary~\ref{cor:BayesML_consistency}, which states consistency for
fully Bayesian model determination.

Theorem~\ref{thm:BIC_consistent} also allows us to obtain consistency
results for pseudo-likelihood methods in graphical model selection,
where regressions are performed to model each node's dependence on the
other nodes in the graph (``neighborhood selection'' for each node),
and these neighborhoods are then combined to hypothesize a sparse
graphical model; see \citet{Meinshausen:2006} and
\citet{Ravikumar:2010}. Since in each regression, the covariates
consist of random observations from the potential ``neighbors'' of the
node in question, it is crucial that our analysis of consistency
allows for random covariates. Furthermore, for consistent model
selection, the neighborhood selection procedure must succeed
simultaneously for each node, and therefore we make use of the
explicitly-calculated finite-sample bounds in
Theorem~\ref{thm:BIC_consistent}.  Our results for the graphical Ising
model are given in Theorem~\ref{thm:Ising}.

The remainder of this paper is organized as follows. We begin by
defining the setting and introducing notation, in Section
\ref{sec:prelim}.  In Section \ref{sec:BayesML}, we discuss Bayesian
model selection and the Laplace approximation to the marginal
likelihood. Our result on the consistency of the EBIC for regression
is given in Section \ref{sec:EBIC}, where we also present experiments
on real data that show good performance of the EBIC in practice. In
Section \ref{sec:graphs}, we turn to graphical models and present
theoretical and empirical results showing the consistency of the EBIC
for reconstructing a sparse graph based on a neighborhood selection
procedure. We outline the proofs for Theorems~\ref{thm:Bayes=BIC}
, \ref{thm:BIC_consistent}, and~\ref{thm:Ising} in Section \ref{sec:proofs}, and give the
full proofs in the Appendix. Finally, in Section \ref{sec:conclusion},
we discuss our results and outline directions for future work.

\section{Setup and assumptions}
\label{sec:prelim}

We treat generalized linear models in which the observations of the
response variable follow a distribution from a univariate exponential
family with densities
\[
p_\theta(y)\,\propto\, \exp\left\{y \cdot
  \theta-\bb(\theta)\right\}\,, \qquad \theta\in\Theta=\R,
\]
where the density is defined with respect to some measure on $\R$. 
More precisely, the observations of the response are independent
random variables $Y_1,\dots,Y_n$, with $Y_i\sim p_{\theta_i}$.  The
vector of natural parameters
$\boldsymbol{\theta}=(\theta_1,\dots,\theta_n)^T$ is assumed to lie in
the linear space spanned by the columns of a design matrix
$X=(X_{ij})\in\R^{n\times p}$, that is, $\boldsymbol{\theta} = X\phi$
for some parameter vector $\phi\in\R^p$.  Our focus is on the case of
random covariates.  Let $X_{\bul j}$  be the $n$-dimensional vector  
of observed values for the $j$th covariate (the $j$th column of the
design matrix $X$), 
 and write $X_{i\bul}$ for the $p$-dimensional
covariate vector in the $i$th row of the design matrix $X$.  Then we
assume $X_{1\bul},\dots,X_{n\bul}$ to be independent and
identically distributed random vectors.

Our results treat a sparsity scenario in which the joint distribution
of $Y_1,\dots,Y_n$ is determined by a true parameter vector
$\phi^*\in\R^p$ supported on a (small) set $J^*\subset[p]$, that is,
$\phi^*_j\not=0$ if and only if $j\in J^*$.  Our interest is in
recovery of the set $J^*$.  To this end, we consider the different
submodels given by the linear spaces spanned by subsets $J\subset[p]$
of the columns of the design matrix $X$.  We will use $J$ to denote
either an index set for covariates or the resulting model, for
convenience.  Finally, we denote subsets of covariates by
$X_{iJ}=(X_{ij})_{j\in J}$, where $J\subset [p]\coloneqq\{1,\dots,p\}$.

\subsection{Assumptions}

We will be concerned with asymptotic questions in a scenario in which
$n\rightarrow\infty$ and the number of covariates $p=p_n$ is allowed
to grow.  Let $\kappa_n=\log_n(p_n)$, and let
$\kappa=\lim\sup\kappa_n\in[0,\infty]$.  Subsequently, we will
suppress the sample size index and write $p$ rather than $p_n$.  Our
theorems apply to either one of the following cases
 (recall that $X_{1j},\dots,X_{nj}$ are identically distributed):
\begin{itemize}
\item[(A1)] The covariates are bounded (or bounded with probability
  one), that is, there is a constant 
  $\AA<\infty$ such that, $|X_{1j}|\leq \AA$ for $j=1,\dots,p$.
\item[(A2)] There is an even integer $K>2\kappa$ (in particular,
  $\kappa<\infty$), for which the covariates have moments bounded as
  $\EE\left[|X_{1j}|^{6K}\right]\leq \AA_K<\infty$ for $j=1,\dots,p$.
  Moreover, for all $t>0$, it holds that
  \[
  \BB_K(t)\coloneqq \sup_j \EE_{X_{1j}}\left[\sup_{|\theta|\leq
      t|X_{1j}|}\left|\bb'''(\theta)\right|^{2K}\right]<\infty\;.
  \]
\end{itemize}
We remark that the condition on the third derivative of the cumulant
generating function $\bb$ holds for logistic and Poisson models, as
well as the normal model with known variance.  In addition, we assume
the following five conditions:
\begin{itemize}
\item[(B1)] The growth of $p$ is subexponential, that is,
  $\log(p)=\mathbf{o}(n)$.
\item[(B2)] The size of the true model given by the cardinality of the
  support $J^*$ of the true parameter vector $\phi^*$ is bounded as
  $|J^*|\leq q$ for a fixed integer $q\in\mathbb{N}$.
\item[(B3)] All small sets of covariates have second moment matrices
  with bounded eigenvalues, that is, for some fixed finite constants
  $a_1,a_2>0$, it holds that $a_1\mathbf{I}_J\preceq
  \EE\left[X_{1J}^{}X_{1J}^T\right]\preceq a_2\mathbf{I}_J$ for all $|J|\leq
  2q$.
\item[(B4)] The norm of the true signal is bounded, namely,
  $\|\phi^*\|_2\leq a_3$ for a fixed constant $0<a_3<\infty$.
\item[(B5)] The small true coefficients have bounded decay such that
  \[
  \sqrt{\frac{\log(np)}{n}}=\mathbf{o}\left(\min\left\{\left|\phi^*_j\right|
      :j\in J^*\right\}\right).
  \]
\end{itemize}

\subsection{Comparison to assumptions used in existing work}

We compare the above assumptions to those used by \citet{Chen:2011},
who show that the EBIC is consistent for univariate GLMs with
fixed covariates. In this work by Chen and Chen, only bounded covariates are
considered --- that is, our (A1) scenario. Our conditions (B1), (B2),
and (B4) appear (explicitly or implicitly) in their work as well.
Condition (B5) appears in a stronger form in their work, where they
assume that $\phi^*$ is fixed and therefore its minimal nonzero value
is bounded from below by some constant.

A crucial difference lies in our assumption (B3). The analogous
condition of \citet{Chen:2011} requires that, for some positive finite
$\l_1$ and $\l_2$, $\l_1\mathbf{I}_{J}\preceq n^{-1}
H_J(\phi_J)\preceq \l_2 \mathbf{I}_J$, for any $J\supset J^*$ with
$|J|\leq 2q$ and any $\phi_J$ in a neighborhood of $\pstarj$. Here
$H_J(\cdot)\coloneqq \left(H_{[n]}(\cdot)\right)_{J,J}$ 
is the Hessian of the negative log-likelihood function (restricted to rows and columns
corresponding to the covariates in $J$). Note that $H_J(\cdot)$
 depends on the design matrix $X_J$ for the given set of
covariates $J$. Since we work in the setting of random covariates, we
cannot make this assumption on the empirical design matrix, and
therefore use condition (B3), which is a weaker assumption on the
distribution of the covariates.

\section{Bayesian model selection}
\label{sec:BayesML}

Observing the independent random vectors
$(X_{1\bul},Y_1),\dots,(X_{n\bul},Y_n)$, the likelihood function of
the considered GLM is
\[
L_{[n]}(\phi)=\exp\left\{\ell_{[n]}(\phi)\right\}=\exp\left\{\sum_{i=1}^n \ell_i(\phi)\right\}=\exp\left\{\sum_{i=1}^n
  Y_i\cdot X_{i\bul}^T\phi-\bb(X_{i\bul}^T\phi)\right\}\;,
\]
with $\ell_i(\phi)$ being the log-likelihood function based on the
$i$th observation $(X_{i\bul},Y_i)$.  Let $P(J)$ be the prior
probability of model $J\subset[p]$, and let $f_J(\phi_J)$ be a prior
density on the model's parameter space $\R^J$.  The unnormalized
posterior probability of model $J$ is then
\[
\mathrm{Bayes}(J)=
P(J)\cdot\int_{\phi_J\in\R^J} L_{[n]}(\phi_J)f_J(\phi_J)\; d\phi_J\;,
\]
where, with some abuse of notation, we write $L_{[n]}(\phi_J)$ for the
likelihood function of the model given by $J\subset[p]$, that is,
\[
L_{[n]}(\phi_J)=\exp\left\{\sum_{i=1}^n
  \ell_i(\phi_J)\right\}=\exp\left\{\sum_{i=1}^n Y_i\cdot
  X_{iJ}^T\phi_J-\bb(X_{iJ}^T\phi_J)\right\}.
\]

Our interest is now in the frequentist properties of the Bayesian
model selection procedure that chooses a model $J$ by maximizing the
(unnormalized) posterior probability $\mathrm{Bayes}(J)$.  Assuming
that observations are drawn from a distribution in the GLM, we ask
the following two questions.  First, is the Bayesian procedure
consistent, that is, will it choose the smallest true model in the
large-sample limit?  Second, how can we approximate the marginal
likelihood integral appearing in $\mathrm{Bayes}(J)$, without
introducing approximation errors that might change which model is
selected?  In the classical scenario with a fixed number of
covariates $p$, when considering a growing sample size $n$, the
answers to the above questions are tied together.  Under suitable
conditions, consistency of the Bayesian procedure can be established
by proving that, for large samples, it selects the same model as the
consistent BIC or the more accurate approximation obtained by applying
the Laplace approximation to the marginal likelihood integral.  We
will show these same connections to exist in sparse high-dimensional
settings.

Theorem~\ref{thm:Bayes=BIC} below states that, under appropriate
conditions, the Laplace approximation to marginal likelihood integrals
remains uniformly accurate across large spaces of models.  This result
is obtained under an upper bound $q$ on the model size.  As discussed
in Section~\ref{subsec:intro-ebic} in the introduction, we give
special emphasis to a particular class of prior distributions on the
set of models, namely, priors of the form
\begin{equation}
  \label{eq:prior}
  P(J)\propto {p\choose |J|}^{-\g}\cdot \one{|J|\leq q}\;, \qquad J\subset[p],
\end{equation}
for some $\g\geq 0$.  
We write
$\mathrm{Bayes}_\gamma(J)$ for the unnormalized posterior probability
associated with the choice of prior $P(J)={p\choose |J|}^{-\g}\cdot
\one{|J|\leq q}$, where we suppress the normalizing constant in the prior for convenience.
  Then we show that, for sufficiently large $n$, the
event
\[
\arg\min_{|J|\leq q} \mathrm{BIC}_{\g}(J) = \arg\max_{|J|\leq q}
\mathrm{Bayes}_{\g}(J)\;.
\]
occurs with high probability.  In other words, the EBIC
\begin{equation}
\label{eq:EBIC-1}
\mathrm{BIC}_{\g}(J)=-2\log L_{[n]}(\phatj)+|J|\log(n)+2\g|J|\log(p)\;.
\end{equation}
yields an approximation to the Bayesian posterior probability that is
accurate enough for the resulting model selection procedures to be
asymptotically equivalent.  In fact, Theorem~\ref{thm:Bayes=BIC}
states a stronger result according to which $\mathrm{Bayes}_\g(J)$ is
approximated up to a constant by $\mathrm{BIC}_\g(J)$. 
Finally, we prove consistency of
the EBIC in Section~\ref{sec:EBIC}.  In combination with the results
of this section, we obtain a proof of the consistency of the Bayesian
model selection procedure.

We now give the precise statement of the points just outlined.  We
adopt the notation $a=b(1\pm c)$ to conveniently express that $a$
belongs to the interval $[b(1-c),b(1+c)]$.

\begin{theorem}\label{thm:Bayes=BIC}
  Assume that conditions~(B1)-(B5) hold, and that either assumption
  (A1) or (A2) holds.  Moreover, assume the following mild conditions
  on the family of priors $\left(f_J:J\subset[p],|J|\leq q\right)$,
  which require the existence of constants $0<F_1,F_2,F_3<\infty$
  such that, uniformly for all $|J|\leq q$, we have
  \begin{itemize}
  \item[(i)] an upper bound on the priors: 
    \[
    \sup{}_{\phi_J} f_J(\phi_J)\leq F_1<\infty,
    \]
  \item[(ii)] a lower bound on the priors over a compact set:
    \[
    \inf{}_{\|\phi_J\|_2\leq R+1} f_J(\phi_J)\geq F_2>0,
    \]
    where $R$ is a function of the constants in assumptions (A1) or (A2) and (B1)-(B5), defined in the proofs,
  \item[(iii)] a Lipschitz property on the same compact set:
    \[
    \sup{}_{\|\phi_J\|_2\leq R+1} \left\|\nabla
      f_J(\phi_J)\right\|_2\leq F_3<\infty.
    \]
  \end{itemize}
  Then there is a constant $C$, no larger than
  $4F_3F_2^{-1}\l_1^{-\nicefrac{1}{2}}+2q\l_3\l_1^{-\nicefrac{3}{2}}+2$, 
  such that, for sufficiently large $n$, the event that 
  \begin{equation*}
    \mathrm{Bayes}(J) 
    =P(J)\cdot L_{[n]}(\phatj)f_J(\phatj)\cdot
    \left|H_J(\phatj)\right|^{-\nicefrac{1}{2}}(2\pi)^{\nicefrac{|J|}{2}}\cdot
    \left(1\pm C\sqrt{\frac{\log(np)}{n}}\right)  \text{ for all }  |J|\leq q
  \end{equation*}
  occurs with
  probability at least $1-(np)^{-1}$ under (A1), and with probability at
  least $1-(np)^{-1}-4K^{K+1}n^{-\frac{K-2\kappa}{2}}$ under (A2).
  In particular, for the (unnormalized) prior $P(J)= {p\choose
    |J|}^{-\gamma}\cdot\one{|J|\leq q}$, it holds that 
  \begin{equation*}
    \left|\log\left(\mathrm{Bayes}_{\g}(J)\right)-
      \left(-\tfrac{1}{2}\mathrm{BIC}_{\g}(J)\right)\right|\leq
    C_1\;,
  \end{equation*}
  where $C_1$ is a constant no larger than
  $\tfrac{q}{2}\log(2\pi)+\g
    q\log(2q)+q\log\max\{\l_1^{-1},\l_2\}+\log\max\{F_1,F_2^{-1}\}+1$.
\end{theorem}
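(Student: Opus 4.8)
The plan is to establish the displayed approximation one model at a time, but with every error term and exceptional probability controlled uniformly over the roughly $p^q$ subsets $J$ with $|J|\le q$, and then to extract the ``In particular'' statement by taking logarithms. Before analyzing any single integral I would first fix a \emph{good event} on which three facts hold simultaneously for all such $J$: (a) the MLE $\phatj$ exists, is unique, and lies in the ball $\{\|\phi_J\|_2\le R\}$; (b) the rescaled Hessian obeys $\l_1\mathbf I_J\preceq n^{-1}H_J(\phi_J)\preceq\l_2\mathbf I_J$ for all $\phi_J$ with $\|\phi_J\|_2\le R+1$; and (c) the third-derivative tensor of $\ell_{[n]}$ is bounded by $n\l_3$ on the same ball. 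Uniqueness and the location of $\phatj$ follow from strict concavity of $\ell_{[n]}$ (convexity of $\bb$) together with (B4), while (b) and (c) follow from (B3), concentration of the empirical second-moment matrices $n^{-1}\sum_i X_{iJ}X_{iJ}^T$, and control of $\bb''$ and $\bb'''$ at $X_{iJ}^T\phi_J$. The dichotomy (A1)/(A2) enters precisely here: under bounded covariates the relevant sums concentrate exponentially, so a union bound over the $\le p^q$ models leaves a tail of order $(np)^{-1}$; under the moment condition (A2) only Markov/Rosenthal-type inequalities are available and the random factor $\bb'''$ must be dominated via $\BB_K(t)$, producing the extra polynomial tail $4K^{K+1}n^{-(K-2\kappa)/2}$.

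Next I would fix one $J$ and carry out the Laplace expansion on the good event. Writing the integral as $L_{[n]}(\phatj)\int\exp\{\ell_{[n]}(\phi_J)-\ell_{[n]}(\phatj)\}f_J(\phi_J)\,d\phi_J$ and Taylor-expanding at the MLE, where the gradient vanishes, I obtain a quadratic term $-\tfrac12(\phi_J-\phatj)^TH_J(\phatj)(\phi_J-\phatj)$ and a cubic remainder $R_3$ with $|R_3|\le n\l_3\|\phi_J-\phatj\|_2^3$ by (c). I localize to the ball $B=\{\|\phi_J-\phatj\|_2\le\rho\}$ with $\rho$ of order $\l_1^{-1/2}\sqrt{\log(np)/n}$: on $B^c$, concavity and the lower bound $\l_1$ force the integrand to be polynomially negligible relative to the main term, so the tail is absorbed into the $+2$ in $C$. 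On $B$, the Lipschitz and lower bounds (iii)--(ii) give $|f_J(\phi_J)/f_J(\phatj)-1|\le F_3F_2^{-1}\|\phi_J-\phatj\|_2$, whose supremum over $B$ contributes the $4F_3F_2^{-1}\l_1^{-1/2}$ piece of $C$ after the factor $\sqrt{\log(np)/n}$; integrating $|R_3|$ in mean against the Gaussian weight $\exp\{-\tfrac12(\cdot)^TH_J(\phatj)(\cdot)\}$, whose mass concentrates at scale $(n\l_1)^{-1/2}$, yields a $q^{3/2}$ factor absorbed into the $2q\l_3\l_1^{-3/2}$ piece for large $n$. Using $e^x=1\pm2|x|$ for small $x$ and evaluating the remaining Gaussian integral over $\R^J$ produces the claimed factor $|H_J(\phatj)|^{-1/2}(2\pi)^{|J|/2}$, uniformly in $J$.

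For the ``In particular'' statement I would take logarithms of the first display with $P(J)={p\choose|J|}^{-\g}$ and compare term by term with $-\tfrac12\mathrm{BIC}_\g(J)$. After the common $\ell_{[n]}(\phatj)$ cancels, four residual groups remain: the prior/penalty mismatch $\g(|J|\log p-\log{p\choose|J|})$, which is nonnegative and at most $\g q\log(2q)$ since ${p\choose k}\ge(p/k)^k$; the determinant/penalty mismatch $-\tfrac12\log|H_J(\phatj)|+\tfrac{|J|}2\log n$, bounded by $q\log\max\{\l_1^{-1},\l_2\}$ via the eigenvalue bounds in (b); the value $\log f_J(\phatj)$, bounded by $\log\max\{F_1,F_2^{-1}\}$ by (i)--(ii) since $\phatj$ lies in the radius-$R$ ball; and the leftover $\tfrac{|J|}2\log(2\pi)$ together with $\log(1\pm C\sqrt{\log(np)/n})$, bounded by $\tfrac q2\log(2\pi)+1$ for large $n$. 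Summing yields the stated bound on $C_1$.

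The main obstacle is the uniformity, not the single-model expansion, which is classical. Establishing the good event (a)--(c) simultaneously for all $\sim p^q$ models with the advertised failure probabilities requires sharp concentration of the empirical second-moment matrices and of the derivative sums uniformly over exponentially many coordinate subsets; in the unbounded case (A2) this means extracting a polynomial tail from only finitely many moments while the random multiplier $\bb'''$ is itself controlled through $\BB_K(t)$. Equally delicate is pinning the location of $\phatj$ for every superset of $J^*$ at once, so that a single ball of radius $R$ genuinely contains all the MLEs; this is what permits the fixed prior constants $F_1,F_2,F_3$ to be applied uniformly across the entire model class.
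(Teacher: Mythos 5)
Your overall architecture matches the paper's: a uniform high-probability event giving (a) all sparse MLEs in a fixed ball of radius $R$, (b) Hessian eigenvalue bounds $\l_1,\l_2$ on the slightly larger ball, and (c) a local Lipschitz bound $\l_3$ on the Hessian (this is the paper's Lemma~\ref{lem:WHP}); then a per-model Laplace expansion localized at scale $\sqrt{\log(np)/n}$, with the cubic remainder and the prior-Lipschitz ratio producing exactly the two leading pieces of $C$; then the same term-by-term logarithm comparison for the ``In particular'' claim (your bound ${p\choose k}\ge (p/k)^k$ serves the same purpose as the paper's ${p\choose|J|}\ge (p/(2|J|))^{|J|}$). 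The genuinely different step is the tail. The paper splits $\R^J\setminus\mathcal{N}_1$ into an intermediate annulus $\mathcal{N}_2\setminus\mathcal{N}_1$ (Euclidean radius about $1$, where the eigenvalue bound holds along the whole segment, so a quadratic upper bound and $\chi^2$ tails apply) and a far region $\R^J\setminus\mathcal{N}_2$ (where only concavity is available, giving linear decay and exponential tails) --- three regions in total. You handle all of $B^c$ in one stroke, and this can be made to work, but only via an argument you leave implicit: the quadratic bound inside $B$ (valid because $B$ sits inside the radius-$(R+1)$ ball) gives $\ell_{[n]}(\phi_J)-\ell_{[n]}(\phatj)\le -2\log(np)$ on $\partial B$ when $\rho=2\l_1^{-\nicefrac{1}{2}}\sqrt{\log(np)/n}$, and then concavity along rays through $\phatj$ propagates this linearly, $\ell_{[n]}(\phatj+t\rho u)-\ell_{[n]}(\phatj)\le -2t\log(np)$ for $t\ge1$ and unit vectors $u$. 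Integrating $F_1\exp\{-2\log(np)\|\phi_J-\phatj\|_2/\rho\}$ over $B^c$ gives $O\big(\rho^{|J|}(np)^{-2}\big)$, which is $o\big(\sqrt{\log(np)/n}\big)$ relative to the main term of size roughly $F_2(2\pi/(\l_2 n))^{\nicefrac{|J|}{2}}$. Your phrase ``the integrand is polynomially negligible'' is not by itself sufficient on an unbounded region; it is this radial-concavity integral bound that closes the argument. Done this way, your two-region decomposition is simpler than the paper's, trading the $\chi^2$ tail bounds for the steep slope $2\log(np)/\rho\sim\sqrt{\l_1 n\log(np)}$.

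One under-justified point: you assert that containment of every $\phatj$ in the radius-$R$ ball follows from ``strict concavity of $\ell_{[n]}$ together with (B4).'' That mechanism is insufficient: (B4) bounds $\|\phi^*\|_2$, not $\|\phatj\|_2$, and concavity alone cannot locate a maximizer. The paper's proof of Lemma~\ref{lem:WHP}(iii) uses the inequality $L_{[n]}(\phatj)\ge L_{[n]}(\mathbf{0})$, a concentration-based lower bound on $\log L_{[n]}(\mathbf{0})-\log L_{[n]}(\phi^*)$ (via the score bound at $\phi^*$), and the quadratic/linear decay of $L_{[n]}$ away from $\phi^*$, to force $\|\phatj-\phi^*\|_2$ below a constant --- and this must hold for \emph{all} $|J|\le 2q$, not only supersets of $J^*$. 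You correctly flag this uniform localization as the delicate ingredient in your final paragraph, but your outline should route it through that likelihood-comparison argument rather than through concavity and (B4).
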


The proof of this theorem is given in Section~\ref{sec:proofs}. 
The constant $R$ appearing in conditions (ii) and (iii) on the family of priors arises in the proof, where we show that with high probability, the MLEs $\widehat{\phi}_J$ for all sparse models $J$ will lie inside a ball of radius $R$ centered at zero.

\section{Consistency of the extended Bayesian information
  criterion}\label{sec:EBIC} 

Let $J^*$ be the smallest true model; recall Section~\ref{sec:prelim}.
We now show that the extended BIC from (\ref{eq:EBIC-1}) satisfies
that, with high probability,
$\mathrm{BIC}_{\g}(J)>\mathrm{BIC}_{\g}(J^*)$ for all $J\neq J^*$ with
$|J|\leq q$, as long as the penalty on model complexity is
sufficiently large.  Specifically, we require $\g>1-\frac{1}{2\kappa}$
(and $\g\geq 0$), where $\kappa=\lim\sup
\kappa_n=\lim\sup\log_n(p_n)\in[0,\infty]$.

Our main consistency result, stated next, is very similar to the
consistency results of \citet{Chen:2011}
but treats random instead of deterministic covariates.

\begin{theorem}\label{thm:BIC_consistent} 
  Assume that conditions~(B1)-(B5) hold, and that either assumption
  (A1) or (A2) holds.  Choose three scalars $\a,\b,\g$ to satisfy
 \begin{equation*}
   \left\{\text{\begin{tabular}{ll}\T\B
         $\g>1-\tfrac{1}{2\kappa}+\b+\tfrac{\a}{\kappa}$,&if 
         $\kappa>0$,\\ 
         \T\B$\a\in\left(0,\tfrac{1}{2}\right)$ and $\b>0$,&if
         $\kappa=0$.\\\end{tabular}}\right.
 \end{equation*}
 Then, for sufficiently large $n$, the event 
 \begin{equation}\label{eq:BIC}
   \mathrm{BIC}_{\g}(J^*) \leq 
   \left(\min_{J\neq J^*,|J|\leq q}\mathrm{BIC}_{\g}(J)\right)-
   \log(p)\cdot\left(\g-\left(1-\frac{1}{2\kappa}+\b+
       \frac{\a}{\kappa}\right)\right)\;
 \end{equation}
 occurs with probability at least $1-n^{-\a}p^{-\b}$ under (A1) or at
 least $1-4K^{K+1}n^{-\frac{K-2\kappa}{2}}-n^{-\a}p^{-\b}$ under (A2).
 In particular, the EBIC is  consistent for model
 selection, whenever $\g>1-\frac{1}{2\kappa}$.
\end{theorem}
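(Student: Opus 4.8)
The plan is to bound $\Delta(J) := \mathrm{BIC}_{\g}(J) - \mathrm{BIC}_{\g}(J^*)$ from below, uniformly over all $J \ne J^*$ with $|J| \le q$, and to show this lower bound matches the margin in~(\ref{eq:BIC}) on the stated high-probability event. Since
\[
\Delta(J) = -2\bigl[\ell_{[n]}(\phatj) - \ell_{[n]}(\phat_{J^*})\bigr] + \bigl(|J| - |J^*|\bigr)\bigl(\log n + 2\g\log p\bigr),
\]
I would split the argument according to whether $J$ overfits ($J \supsetneq J^*$) or underfits ($J \not\supseteq J^*$), because the penalty increment is the dominant term in the first case while the likelihood deficit dominates in the second.

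For an overfitting model $J \supsetneq J^*$ with $k := |J \setminus J^*| \ge 1$, a second-order Taylor expansion of the log-likelihood about $\phat_{J^*}$ (embedded in the larger model) writes the likelihood-ratio statistic $2[\ell_{[n]}(\phatj) - \ell_{[n]}(\phat_{J^*})]$, up to a remainder controlled by the third-derivative moment bound $\BB_K(t)$ of (A2) (or by boundedness under (A1)), as a quadratic form in the score increment along the $k$ added coordinates, with kernel the inverse empirical information matrix. Using the eigenvalue bounds of (B3) (after transferring them to the empirical matrices, see below), this quadratic form has chi-square-type (sub-exponential) tails, so that $P\bigl(2[\ell_{[n]}(\phatj) - \ell_{[n]}(\phat_{J^*})] \ge t\bigr) \lesssim e^{-t/2}$ up to polynomial-in-$t$ factors; the sharp constant $\tfrac12$ in this exponent is exactly what produces the sharp threshold on $\g$. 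A union bound over the at most $\binom{p}{k} \le p^k$ such models, and summation over $k = 1, \dots, q$, then show that on the stated event the best overfitting model raises the likelihood-ratio statistic $2[\ell_{[n]}(\phatj) - \ell_{[n]}(\phat_{J^*})]$ to at most about $2k\log p = 2k\kappa_n\log n$. Comparing this against the penalty increment $k(\log n + 2\g\log p)$ and using $\log p = \kappa_n\log n$, the penalty wins precisely when $1 + 2\g\kappa > 2\kappa$, that is, $\g > 1 - \tfrac{1}{2\kappa}$. The budget parameters $\a$ and $\b$ enter by inflating each threshold $t$, trading the stronger requirement $\g > 1 - \tfrac{1}{2\kappa} + \b + \tfrac{\a}{\kappa}$ for the failure probability $n^{-\a}p^{-\b}$ and producing the explicit margin in~(\ref{eq:BIC}).

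For an underfitting model $J \not\supseteq J^*$, at least one true covariate is omitted, and I would instead lower-bound the likelihood deficit by comparing $J$ with the enlarged model $J \cup J^*$ (of size $\le 2q$). Decomposing
\[
\ell_{[n]}(\phat_{J^*}) - \ell_{[n]}(\phatj) = \bigl[\ell_{[n]}(\phat_{J\cup J^*}) - \ell_{[n]}(\phatj)\bigr] - \bigl[\ell_{[n]}(\phat_{J\cup J^*}) - \ell_{[n]}(\phat_{J^*})\bigr],
\]
the first bracket is the signal carried by the omitted coordinates: by the same quadratic expansion together with (B3), which forces the population conditional second moment of any omitted covariate given those in $J$ (a Schur complement of a $2q$-dimensional second-moment matrix) to be bounded below, and the curvature control afforded by (B3)-(B4), it is at least of order $n\,\min_{j\in J^*}|\pstar_j|^2$, which by (B5) is $\gg \log(np)$. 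The second bracket is an overfitting term of order only $\log n$, handled exactly as above. Hence $\ell_{[n]}(\phat_{J^*}) - \ell_{[n]}(\phatj) \gg \log(np)$, so $\Delta(J)$, whose penalty difference is at most $\mathbf{O}(q\log(np))$, far exceeds the required margin. Because this deficit is of order $n$ while there are at most $p^q$ underfitting models, a crude union bound suffices in this regime.

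The principal obstacle, and the point at which the random-covariate setting genuinely departs from the fixed-design analysis of \citet{Chen:2011}, is to transfer the population eigenvalue bounds of (B3) to the empirical information matrices $H_J(\cdot)$ uniformly over the combinatorially many sparse sets $J$, since every step above quietly relied on such control. Under (A1) this is matrix concentration for bounded summands; under (A2) the covariates are unbounded, so one must first truncate them at a level growing with $n$, bound the discarded tails through the $6K$-th moment assumption $\AA_K$ and the derivative-moment bound $\BB_K(t)$, and only then apply concentration --- this truncation step is the most delicate part and is precisely what yields the extra failure probability $4K^{K+1}n^{-(K-2\kappa)/2}$. The remaining bookkeeping is to calibrate the single-model tail exponents against the $\binom{p}{k}$ union bounds so that the sum over $k \le q$ collapses to the clean probability $n^{-\a}p^{-\b}$. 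Finally, consistency for every $\g > 1 - \tfrac{1}{2\kappa}$ follows by choosing $\a,\b>0$ small enough that $\g > 1 - \tfrac{1}{2\kappa} + \b + \tfrac{\a}{\kappa}$ still holds while $n^{-\a}p^{-\b}\to 0$ (and, under (A2), $K > 2\kappa$ ensures $n^{-(K-2\kappa)/2}\to 0$).
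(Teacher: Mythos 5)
Your proposal follows essentially the same strategy as the paper's proof: a split into overfitting and underfitting models, chi-square-type tail bounds on score quadratic forms (with the sharp constant $\tfrac12$) combined with union bounds over the $\binom{p}{k}$ models of each size for the overfitting case, a likelihood deficit of order $n\min_{j\in J^*}|\phi^*_j|^2 \gg \log(np)$ forced by (B5) for the underfitting case, and, as the crucial ingredient handling random covariates, a lemma transferring the population eigenvalue bounds of (B3) to the empirical Hessians uniformly over all sparse supports (the paper's Lemma~\ref{lem:WHP}, proved via a blocking argument and moment inequalities rather than truncation, with the $4K^{K+1}n^{-(K-2\kappa)/2}$ term arising from the moment-based concentration under (A2)). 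The remaining differences are cosmetic: the paper expands around $\phi^*$ and uses $\ell_{[n]}(\widehat{\phi}_{J^*}) \geq \ell_{[n]}(\phi^*)$ rather than expanding around $\widehat{\phi}_{J^*}$, and it handles underfitting models by directly applying the quadratic upper bound on $\log\left(L_{[n]}(\phi^*+\psi)/L_{[n]}(\phi^*)\right)$ over the enlarged support $J\cup J^*$ (exploiting $\|\widehat{\phi}_J-\phi^*\|_2 \geq \min_{j\in J^*}|\phi^*_j|$) instead of your three-term decomposition through $\widehat{\phi}_{J\cup J^*}$.
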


Combining Theorem~\ref{thm:BIC_consistent} with
Theorem~\ref{thm:Bayes=BIC}, which showed the equivalence of
EBIC-based and Bayesian model selection, we obtain the following
corollary.

\begin{corollary}\label{cor:BayesML_consistency}
  Assume that conditions~(B1)-(B5) hold, and that either assumption
  (A1) or (A2) holds. Choose three scalars $\a,\b,\g$ to satisfy
  \begin{equation*}
    \left\{\text{\begin{tabular}{ll}\T\B$\g>1-
          \tfrac{1}{2\kappa}+\b+\tfrac{\a}{\kappa}$,&if $\kappa>0$,\\
          \T\B$\a\in\left(0,\tfrac{1}{2}\right)$ and $\b>0$,&if $\kappa=0$.\\
        \end{tabular}}\right.
  \end{equation*}
  Then, for sufficiently large $n$, with probability at least
  $1-n^{-\a}p^{-\b}$ under (A1) or at least
  $1-4K^{K+1}n^{-\frac{K-2\kappa}{2}}-n^{-\a}p^{-\b}$ under (A2),
  \begin{equation*}
    \mathrm{Bayes}_{\g}(J^*) > \min_{J\neq J^*,|J|\leq
      q}\mathrm{Bayes}_{\g}(J)\;. 
  \end{equation*}
  In particular, Bayesian model selection is consistent, whenever
  $\g>1-\frac{1}{2\kappa}$.

\end{corollary}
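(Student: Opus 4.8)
The plan is to prove the corollary by transferring the EBIC separation of Theorem~\ref{thm:BIC_consistent} to the Bayesian posterior scores through the uniform Laplace approximation of Theorem~\ref{thm:Bayes=BIC}, working on the intersection of the two high-probability events. First I would fix $\g>1-\tfrac{1}{2\kappa}$ and choose auxiliary exponents $\a\in(0,\tfrac12)$ and $\b>0$ small enough that the strict inequality $\g>1-\tfrac{1}{2\kappa}+\b+\tfrac{\a}{\kappa}$ persists with positive slack
\[
\e \coloneqq \g-\Bigl(1-\tfrac{1}{2\kappa}+\b+\tfrac{\a}{\kappa}\Bigr)>0
\]
(when $\kappa=0$ any admissible pair $\a,\b$ suffices). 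With this choice Theorem~\ref{thm:BIC_consistent} provides, on an event $E_2$ of probability at least $1-n^{-\a}p^{-\b}$ under (A1), the separation $\mathrm{BIC}_\g(J)-\mathrm{BIC}_\g(J^*)\geq \e\log(p)$ for every $J\neq J^*$ with $|J|\leq q$.

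Next I would invoke the constant-error form of Theorem~\ref{thm:Bayes=BIC}: on an event $E_1$ of probability at least $1-(np)^{-1}$ under (A1), the two-sided bound $\bigl|\log\mathrm{Bayes}_\g(J)+\tfrac12\mathrm{BIC}_\g(J)\bigr|\leq C_1$ holds simultaneously for all $|J|\leq q$. Restricting to $E_1\cap E_2$ and combining the lower estimate at $J^*$ with the upper estimate at a fixed competitor $J\neq J^*$, I obtain
\begin{align*}
\log\mathrm{Bayes}_\g(J^*)-\log\mathrm{Bayes}_\g(J)
&\geq\Bigl(-\tfrac12\mathrm{BIC}_\g(J^*)-C_1\Bigr)-\Bigl(-\tfrac12\mathrm{BIC}_\g(J)+C_1\Bigr)\\
&=\tfrac12\bigl(\mathrm{BIC}_\g(J)-\mathrm{BIC}_\g(J^*)\bigr)-2C_1\ \geq\ \tfrac{\e}{2}\log(p)-2C_1 .
\end{align*}
Because this holds uniformly over $J\neq J^*$ with $|J|\leq q$, as soon as $n$ is large enough that $\tfrac{\e}{2}\log(p)>2C_1$ we get $\mathrm{Bayes}_\g(J^*)>\mathrm{Bayes}_\g(J)$ for every such $J$ --- this is the claimed inequality, in fact with the maximum in place of the minimum. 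The probability is handled by a union bound: under (A1) the failure probability is at most $n^{-\a}p^{-\b}+(np)^{-1}$, and since $\a<1$ the second term is dominated by the first for the admissible exponents and is absorbed, matching the stated $1-n^{-\a}p^{-\b}$; under (A2) the argument is identical, with the common heavy-tail term $4K^{K+1}n^{-(K-2\kappa)/2}$ appearing once.

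The one genuinely delicate point is that the Laplace error $C_1$ is a fixed constant that does not decay with $n$, so the conclusion can hold only because the EBIC separation diverges. When $\kappa>0$ the margin $\e\log(p)$ does diverge, since $\kappa>0$ forces $p\to\infty$, and the comparison $\tfrac{\e}{2}\log(p)>2C_1$ closes the argument directly. The case $\kappa=0$ is more subtle, because the $\log(p)$ form of the margin degenerates (the coefficient $1-\tfrac{1}{2\kappa}$ is $-\infty$) and $p$ may even stay bounded; there I would fall back on the mechanism inside Theorem~\ref{thm:BIC_consistent}, namely that the separation is driven either by the $|J|\log(n)$ penalty for over-fitting models $J\supsetneq J^*$ or by an order-$n$ likelihood deficit for models failing to contain $J^*$, so that $\mathrm{BIC}_\g(J)-\mathrm{BIC}_\g(J^*)\to\infty$ in all cases and the same ``diverging margin versus constant error'' comparison applies. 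Verifying that the two events can be intersected without degrading the stated rate is then routine bookkeeping.
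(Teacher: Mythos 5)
Your proposal follows exactly the route the paper intends: the paper offers no separate proof of Corollary~\ref{cor:BayesML_consistency} beyond the remark that it follows by combining Theorems~\ref{thm:Bayes=BIC} and~\ref{thm:BIC_consistent}, and your argument---intersect the two high-probability events, add the EBIC separation $\geq \e\log(p)$ to the two-sided Laplace error $C_1$, and let the diverging margin beat the constant---is precisely that combination spelled out, including the correct observation that the conclusion survives only because the EBIC margin diverges while the Laplace error does not, and including the right fallback (the $\log(n)$-driven penalty gap for overfitting models and the $n\min_j|\phi^*_j|^2$-driven likelihood deficit for incorrect models) when $\log(p)$ does not itself diverge.

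One step is stated incorrectly, though it is easily repaired: the probability bookkeeping. A union bound over the two events gives failure probability $n^{-\a}p^{-\b}+(np)^{-1}$, and your claim that the second term is ``absorbed'' does not hold as written. First, when $\kappa>0$ the hypotheses do not force $\a<1$ (for large $\kappa$ and $\g$ one may take $\a>1$, in which case $(np)^{-1}$ can dominate $n^{-\a}p^{-\b}$); second, even when $(np)^{-1}\leq n^{-\a}p^{-\b}$, the sum is bounded by $2n^{-\a}p^{-\b}$, not by $n^{-\a}p^{-\b}$, so the stated probability is not reached. The clean fix---and what the paper's matching probability statements indicate is intended---is that no union bound is needed at all: the stochastic input to both theorems is the single event of Lemma~\ref{lem:WHP}, whose conclusions (i)--(iv) are all that either proof uses. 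Applying Lemma~\ref{lem:WHP} once, with the corollary's own exponents $(\a,\b)$, yields one event of probability at least $1-n^{-\a}p^{-\b}$ under (A1) (respectively $1-4K^{K+1}n^{-\frac{K-2\kappa}{2}}-n^{-\a}p^{-\b}$ under (A2)) on which the Laplace approximation (with constants now depending on $\a,\b$ through $R$) and the EBIC separation hold simultaneously, giving exactly the stated probability. With that replacement your argument is a complete proof.
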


\subsection{Selecting from a set of candidate models}

In practice, it is not computationally feasible to calculate either
the Bayesian marginal likelihood or the EBIC for every possible sparse
model, since even if the model size bound $q$ is relatively small, the
number of possible models is very large, on the order of $p^q$.
Furthermore, the size $q$ of the smallest true model is not known in
general. Typically, the BIC (or another selection criterion) is applied
only to a manageable number of candidate models, obtained via some
other method. In the sparse regression setting, the Lasso
\citep{Tibshirani:1996} has been demonstrated to be very effective at
recovering sparse linear and generalized linear models
\citep{Friedman:2010}.  The Lasso selects and fits a model by solving
the convex optimization problem
\begin{equation}
  \widehat{\phi}^{\rho}=\arg\min_{\phi\in\R^p} \Big\{- \sum_i \ell_i(\phi) +
  \rho\|\phi\|_1\Big\}\;,\label{eq:lasso}
\end{equation}
where $\|\phi\|_1= \sum_j |\phi_j|$ is the vector $1$-norm and
$\rho\geq 0$ is a penalty parameter. For an appropriate choice of
$\rho$ and under some conditions on the covariates and the signal, the
Lasso is known to be consistent for linear regression; compare Chapter
6 of \citet{Buhlmann:2011}.  The optimal choice of $\rho$ suggested by
theory depends on unknown properties of the distribution of the data,
and is therefore unknown in an applied setting. A common approach to
the problem is to fit the entire ``Lasso path'' of coefficient vectors
$\widehat{\phi}^{\rho}$ for $\rho$ in the range $[0,\infty)$, thus
producing a list of candidate sparse models $\{J_1,J_2,\dots\}$, and
then to select a model from this list using a technique such as
cross-validation or the BIC. By Theorem~\ref{thm:BIC_consistent}, with
probability near one (for large $n$),
$\mathrm{BIC}_{\g}(J^*)<\mathrm{BIC}_{\g}(J_m)$ for any sparse model
$J_m\neq J^*$ in the candidate set. Therefore, if the smallest true
model $J^*$ is in the candidate set, we will be able to find it with
high probability by applying the EBIC to every candidate model.

\subsection{Experiment for sparse
  logistic regression} 

We compare the BIC, the extended BIC with $\g=0.25$ and $\g=0.5$, and
10-fold cross-validation on the task of selecting a logistic model for
distinguishing between spam and legitimate emails. We compare also
to stability selection \citep{Meinshausen:2010}, a recent alternative approach
to the problem of sparse model selection that applies the Lasso repeatedly
to subsamples of the data, and then chooses covariates to include in the 
model based on whether they are ``stable'', that is, whether they appear
consistently over the repeated samples.

\subsubsection{Data and methods for model selection}
\label{sec:EmailMethods}

We used the \textsc{Spambase} data set from the UCI Machine Learning
Data Repository \citep{UCI_data}.\footnote{Available at
  \texttt{http://archive.ics.uci.edu/ml/datasets/Spambase}} The data
is drawn from 4,601 emails, and consists of a binary response (spam or
non-spam classification), along with predictors measuring the
frequency of certain words and characters in the email, and several
other predictive features, for a total of 57 real-valued covariates.
To create a challenging setting where the number of covariates is
large relative to the sample size, we first randomly sampled a subset
$S\subset\{1,\dots,4601\}$, for various sample sizes $n=|S|$. We then
created fake covariates by permuting the true features, in order to
allow $p$ to grow with $n$. We ran 100 repetitions of each of the
settings shown in Table~\ref{table:SpamDataSettings}.

\begin{table}[t]
\caption{Settings for the spam email experiment.}
\begin{center}\begin{tabular}{c||c|c|c}
$n$ &$p$ &\begin{tabular}{c}\# true\\
  features\end{tabular}&\begin{tabular}{c}\# permuted\\
  features\end{tabular}\\\hline 
$100$&$57\cdot 4$&$57$&$57\cdot 3$\\
$200$&$57\cdot 8$&$57$&$57\cdot 7$\\
$300$&$57\cdot 12$&$57$&$57\cdot 11$\\
$400$&$57\cdot 16$&$57$&$57\cdot 15$\\
$500$&$57\cdot 20$&$57$&$57\cdot 19$\\
$600$&$57\cdot 24$&$57$&$57\cdot 23$\\
\end{tabular}\end{center}
\label{table:SpamDataSettings}
\end{table}

Let $Y_i$ be the class label with $Y_i=1$ if email $i$ is spam, and
$Y_i=0$ otherwise.  Let $X_{ij}$ be the value of the $j$th covariate
for the $i$th email.
For each $(n,p)$ pair, we performed the following steps. We first drew
a subsample $S=\{i_1,\dots,i_n\}\subset\{1,\dots,4601\}$ uniformly at
random, and define the response vector to be
$(Y_{i_1},\dots,Y_{i_n})^T$. We then randomly chose permutations
$\sigma_1,\dots,\sigma_K$ of $\{1,\dots,n\}$, where $K$ is chosen to
obtain the desired total number of covariates, i.e.\ $p=57\cdot
(1+K)$. We define the design matrix
$$
\left(\begin{array}{ccc|ccc|c|ccc}
X_{i_1,1}&\dots&X_{i_1,57}&X_{i_{\sigma_1(1)},1}&\dots&X_{i_{\sigma_1(1)},57}&\dots&X_{i_{\sigma_K(1)},1}&\dots&X_{i_{\sigma_K(1)},57}\\
\dots&\dots&\dots&\dots&\dots&\dots&\dots&\dots&\dots&\dots\\
X_{i_n,1}&\dots&X_{i_n,57}&X_{i_{\sigma_1(n)},1}&\dots&X_{i_{\sigma_1(n)},57}&\dots&X_{i_{\sigma_K(n)},1}&\dots&X_{i_{\sigma_K(n)},57}\\
\end{array}\right)\;,$$
which contains one block of 57 true features, and $K$ blocks of 57 permuted (fake) features.

To evaluate the BIC, the EBIC, and cross-validation on this data, we
first generated models by applying the logistic Lasso with a range of
100 penalty-parameter values to the data, using the $\texttt{glmnet}$
package \citep{Friedman:2010} in $\textsc{R}$ \citep{R}. This produced
a list of 100 (possibly not distinct) support sets, 
$J_1,\dots,J_{100}$.
For the BIC and the EBIC, we refitted each candidate model 
$J_m$ using the function $\texttt{glm}$ in $\textsc{R}$, and applied $\mathrm{BIC}_\g$ with $\g=0.0,0.25,0.5$ to each candidate model, to select a single model for each $\mathrm{BIC}_\g$. We also applied 10-fold cross-validation, selecting the single model from the list of candidate models that minimizes average error on the test sets over the 10 folds.

Finally, for stability selection, we used the $\texttt{stabsel}$
function in the $\texttt{mboost}$ package \citep{R_mboost} in
$\textsc{R}$, with expected support set size $q=50$. As noted by
\citet{Meinshausen:2010}, changing the settings within a reasonable
range did not have a large effect on the output.

\subsubsection{Results} 

We evaluate the methods based on their ability to distinguish between
the 57 true and the remaining false (permuted) features.
Table~\ref{table:SpamData} and Figure~\ref{fig:SpamResults} show the
positive selection rate (PSR) and the false discovery rate (FDR) for
each of the five methods in this task, over the range of sample sizes.
As customary, PSR is defined as the proportion of true features
selected by the method, and FDR is the proportion of false positives
among all features selected by the method.

\begin{table}[t]
        \caption{Positive selection rate and false discovery rate in the spam email experiment.}
{\small
\begin{center}
\begin{tabular}{|@{\,}c@{\,}||r|r||r|r||r|r||r|r||r|r||r|r|}
\hline
&\multicolumn{2}{c||}{$n=100$}&\multicolumn{2}{c||}{$n=200$}&\multicolumn{2}{c||}{$n=300$}&\multicolumn{2}{c||}{$n=400$}&\multicolumn{2}{c||}{$n=500$}&\multicolumn{2}{c|}{$n=600$}\\
&PSR&FDR&PSR&FDR&PSR&FDR&PSR&FDR&PSR&FDR&PSR&FDR\\\hline\hline
BIC$_{0.0}$ & 14.12 & 10.95 & 19.37 & 18.04 & 23.39 & 20.04 & 26.40 &
20.79 & 30.79 & 22.86 & 33.46 & 19.81 \\ \hline BIC$_{0.25}$ & 8.65 &
2.18 & 11.33 & 0.92 & 15.11 & 1.49 & 17.42 & 1.00 & 20.21 & 3.03 &
22.35 & 2.75 \\ \hline BIC$_{0.5}$ & 6.37 & 0.27 & 8.82 & 0.00 & 11.00
& 0.00 & 13.33 & 0.00 & 14.77 & 0.24 & 16.60 & 0.00 \\ \hline
Cross-val. & 6.89 & 23.54 & 13.67 & 36.67 & 19.68 & 46.67 &
30.30 & 50.80 & 37.44 & 56.32 & 38.16 & 59.48 \\ \hline Stability
sel. & 3.11 & 0.56 & 6.56 & 2.35 & 8.56 & 2.01 & 10.96 & 2.95 &
12.05 & 4.05 & 13.65 & 4.07 \\\hline
        \end{tabular}\end{center}}
        \label{table:SpamData}
        \end{table}

\begin{figure}[t]
\begin{center}
\includegraphics[width=16cm]{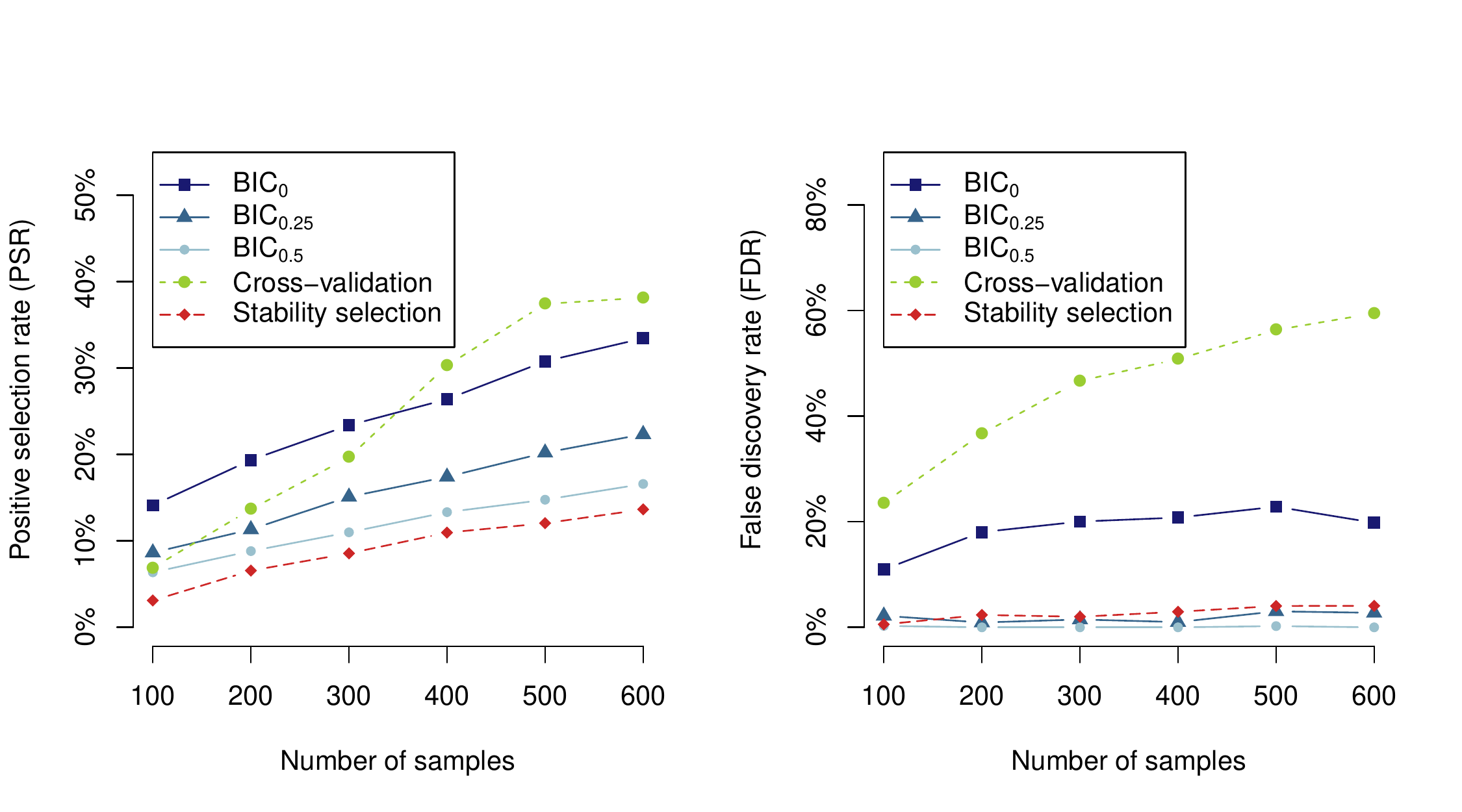}
\caption{Results for the spam email detection experiment.}
\label{fig:SpamResults}
\end{center}
\end{figure}

Comparing the three BICs to cross-validation, we observe that
cross-validation can recover more true features (for larger values of
$n$), but at an unacceptably large increase in the FDR.   
The original BIC performs better but still exhibits a high FDR.  In
contrast, the FDR of the EBIC with either $\gamma=0.25$ or
$\gamma=0.5$ remains very low at all sample sizes; the associated PSR
is smaller but increasing with the sample size.
Stability selection performed similarly to the EBIC
with $\gamma=0.5$ in this experiment, but with slightly lower PSR and
slightly higher FDR. Overall, it seems that the EBIC with
$\gamma=0.25$ performed best at the task of identifying the 57 true
features, with a very low FDR and a moderately good PSR.

The rather low PSRs observed in the simulations are due in part to the
fact that the 57 true features are not necessarily all strongly
relevant to the response.  To account for this in our evaluation of
the five methods, we ran a logistic regression using the full data set
(with a sample size of 4,601 emails) using the \texttt{glm} function
in \textsc{R}, and extracted the p-values for each feature.  For each
method, using the models selected by the method over 100 repetitions
of the experiment with $(n,p)=(600,57\cdot 24)$, we use Gaussian
smoothing (scale: standard deviation = 0.1, on the p-value scale) to
estimate, as a function of $t$, the probability that the method will
select a true feature with p-value $t$.
The estimated functions are plotted in
Figure~\ref{fig:SpamResultsPvals}. (The rate of selection of false
(permuted) features is not shown in this figure.) We see that the
function estimates for cross-validation and the BICs each decay
steadily with p-value, which seems desirable.  In this experiment,
stability selection appears to distinguish less clearly between 
highly and moderately relevant features, if we accept the p-values as
a reasonable measure of relevance.

\begin{figure}[t]
\begin{center}
\includegraphics[width=16cm]{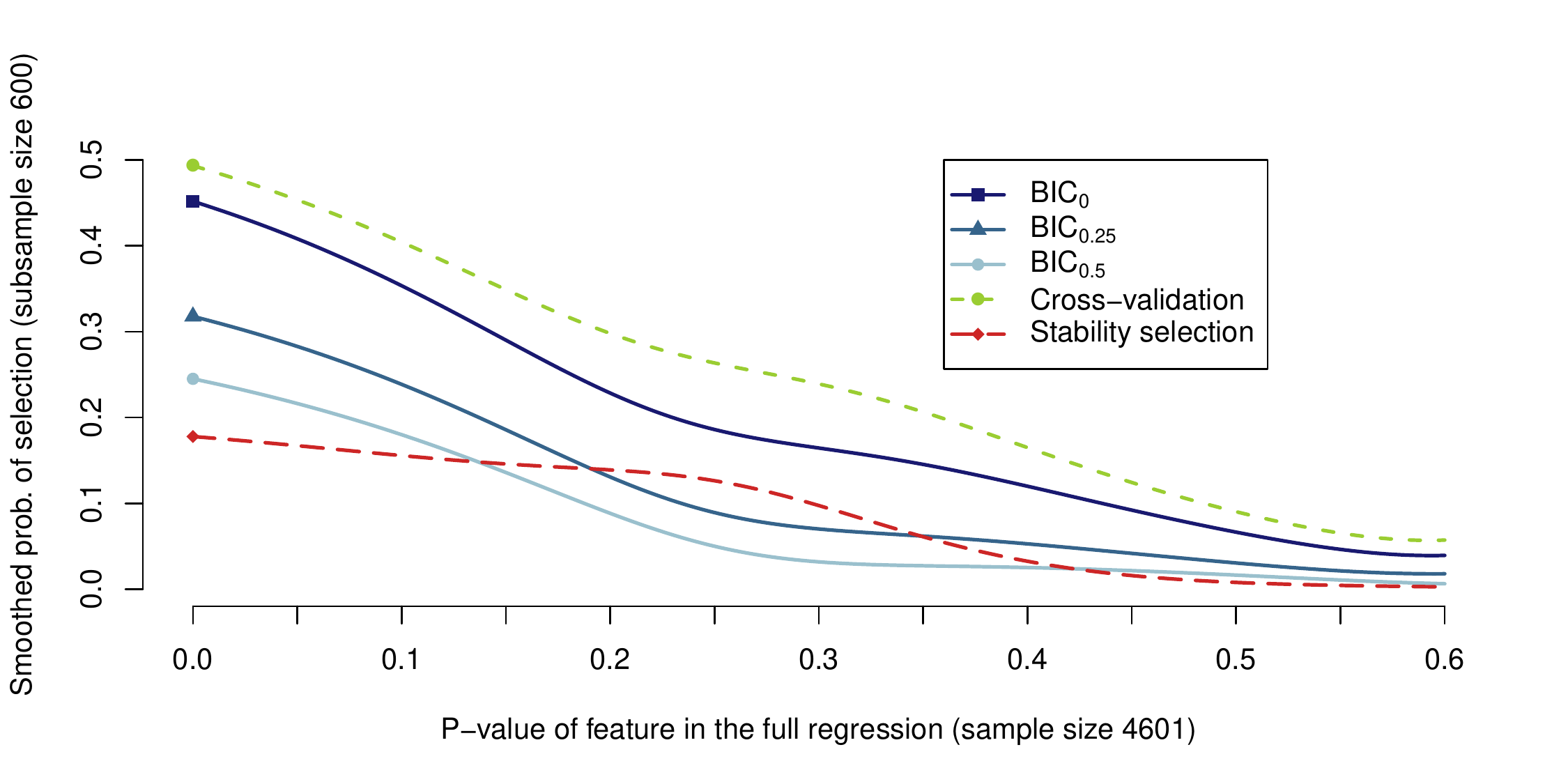}
\caption{Smoothed probability of selecting a true feature, as a function of the p-value of that feature in the full regression.}
\label{fig:SpamResultsPvals}
\end{center}
\end{figure}

\section{Edge selection in sparse graphical models}\label{sec:graphs}

In many applications, sparse graphical models are used to analyze data
arising from multivariate observations with sparse dependency
structure. In the setting we treat here, an undirected graph
$G$ consists of a set of nodes $V$ representing the observed
variables, and a set of undirected edges $E\in V\times V$ representing
possible conditional dependencies between pairs of nodes.
Specifically, if two of the variables do not have an edge between
their corresponding nodes, then they are conditionally independent
given all other observed variables.  The problem of graphical model
selection consists in selecting an appropriate set of edges to include
in the graph that represents the dependency structure among the
observed variables.

In Section~\ref{sec:sparse_graphs}, we introduce different approaches
to this edge selection problem.  In Sections~\ref{sec:GGM}
and~\ref{sec:Ising}, we discuss existing and new theoretical results
for two commonly used classes of sparse graphical
models.

\subsection{Sparse graphical models}\label{sec:sparse_graphs} 

Suppose we observe $n$ independent and identically distributed random
vectors in $\mathbb{R}^p$, denoted $X_{i\bul}=(X_{i1},\dots,X_{ip})$,
for $i=1,\dots,n$.  For each graph $G$ on the set of nodes
$V=\{1,\dots,p\}$, associate the model $\mathcal{M}_G$ comprising all
distributions for which the conditional independence constraints
implied by $G$ are satisfied.  We are then interested in the recovery
of the graph $G^*$ that encodes the dependency structure in the common
true distribution of $X_{1\bul},\dots,X_{n\bul}$.

Since optimizing over the set of all (sparse) graphs is
computationally infeasible, $\ell_1$-norm penalization methods have been
considered.  These `graphical Lasso' procedures maximize the sum of
the log-likelihood function and the absolute values of the relevant
interaction parameters.  As in the regression problem in
(\ref{eq:lasso}), a tuning parameter $\rho$ is introduced to allow for
the necessary trade-off between log-likelihood function and penalty
term.  This approach is the most tractable for the Gaussian case in
which the penalty is the sum of the absolute values of the
off-diagonal entries of the precision matrix $\Theta$
\citep{Banerjee:2008,friedmanhastie}.  With the $\ell_1$-norm
promoting sparsity in the estimate $\widehat{\Theta}^{\rho}$, a graph
estimate $\widehat{G}^{\rho}_{\mathrm{glasso}}$ can be obtained by
including an edge between nodes $j$ and $k$ whenever
$\widehat{\Theta}^{\rho}_{jk}\neq 0$.  \cite{Ravikumar:2011} show
that, under eigenvalue and irrepresentability assumptions on the true
precision matrix $\Theta^*$, the estimate
$\widehat{G}^{\rho}_{\mathrm{glasso}}$ is asymptotically consistent
for a suitable sequence of values of $\rho$.

A similar approach is the neighborhood selection method of
\cite{Meinshausen:2006}, which performs penalized regression for
selecting each node's neighborhood. Specifically, for each variable
$j$, we optimize a penalized conditional likelihood function to find
\[
\widehat{\beta}_j^{\rho}=\arg\min{}_{\b_j}\Big\{-\sum_i \log
  \mathbb{P} (X_{ij} | \{X_{ik}:k\neq j\},\beta_j)+
  \rho\|\beta_j\|_1\Big\}\;.
\]
We then define the graph estimate
$\widehat{G}^{\rho}_{\mathrm{neighbor}}$ to have an edge between nodes
$j$ and $k$ whenever $\widehat{\beta}^{\rho}_{jk}$ and
$\widehat{\beta}^{\rho}_{kj}$ are both nonzero (the \textsc{and} rule), or
whenever either $\widehat{\beta}^{\rho}_{jk}$ or
$\widehat{\beta}^{\rho}_{kj}$ is nonzero (the \textsc{or} rule).  This method
inherits
asymptotic consistency properties from results for the individual
regressions.

Both of the above methods require choosing the tuning parameter
$\rho$.  Similarly, greedy search over all graphs requires a choice of
a sparsity bound $q$, or alternately, a stopping criterion to indicate
when enough edges have been added.  In each case, we can rephrase the
tuning problem as the question of selecting a model from a small list
of candidate graphs $G_1,\dots, G_m$, of various sparsity levels.

We can use cross-validation to select a model from this list, but
there are two disadvantages.  First, $K$-fold cross-validation can be
computationally expensive due to the process of fitting models to $K$
different parts of the data.  More importantly, from the point of view
of graph recovery, cross-validation tends to choose overly large
models leading to selection of many false positive edges, in the
high-dimensional setting when $p\gg n$; compare \citet{Foygel:2010}.
As for regression, we can alternatively use stability selection
\citep{Meinshausen:2010}, where we search for edges that are stable
across sparse models fitted to subsamples of the data using graphical Lasso
or neighborhood selection; see also \cite{Liu:2010}.  This method has been shown to be
asymptotically consistent in a range of settings.  However, it again
requires refitting the model many times for different subsamples.
Finally, as a third approach, we may apply information criteria, and we
now turn to two specific settings where the extended BIC yields a
computationally inexpensive and asymptotically consistent procedure
for edge selection.

\subsection{Gaussian graphical models}\label{sec:GGM} 

Suppose the i.i.d.~observations $X_{1\bul},\dots,X_{p\bul}$ are
multivariate normal with precision (or inverse covariance) matrix
$\Theta$.  Then it is well known that $X_{1j}$ and $X_{1k}$ are
conditionally independent given the remaining variables $\{X_l:l\neq
j,k\}$ if and only if $\Theta_{jk}=0$.  The Gaussian graphical model
$\mathcal{M}_G$ associated with an undirected graph $G$ on nodes
$V=\{1,\dots,p\}$ is the set of all multivariate normal
distributions with $\Theta_{jk}=0$ when $j$ and $k$ are two distinct
non-adjacent nodes in $G$.

Prior work proposes the use of the extended BIC for sparse Gaussian
graphical model selection \citep{Foygel:2010,Gao:2011}.  Accounting
for a matrix parameter, the EBIC is defined as
\[
\mathrm{BIC}_{\g}(G) = -2\ell_{[n]}(\widehat\Theta_G) + |G|\cdot \log(n) +
4|G|\gamma\cdot \log(p),
\]
where $\ell_{[n]}(\widehat\Theta_G)$ denotes the maximized log-likelihood
function for the set of $n$ observations, and $|G|$ is the number of
edges in the graph.  Since each model is only fitted once (to the full data
set), this method carries relatively low computational cost, while
enjoying consistency properties.  We now state a version of the main
theorem from \cite{Foygel:2010}, which gives conditions under which
minimization of the EBIC leads to selection of the smallest true model
$G^*$ when applied to any list of sparse decomposable graphs
containing $G^*$; for a definition of decomposable graphs we refer the
reader to \cite{Lauritzen:1996}.

\begin{theorem} 
  Suppose that the true graph $G^*$ is decomposable with $|G^*|\le q$,
  and that the true precision matrix $\Theta^*\in\mathcal{M}_{G^*}$
  has bounded condition number and minimum nonzero value $\theta_0$
  bounded away from zero. Suppose that $p\propto n^{\kappa}$ for some
  $\kappa<1$, and that the true neighborhood size is bounded for each
  node. Fix any $\g>1-\frac{1}{4\kappa}$.  Then with probability
  tending to one as $n\rightarrow\infty$,
  \[
  \mathrm{BIC}_{\g}(G^*)<\min\left\{\mathrm{BIC}_{\g}(G) \ : \ 
    \text{$G$ is decomposable with $|G|\le q$}\right\}.
  \]
\end{theorem}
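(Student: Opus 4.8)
The plan is to show directly that, with probability tending to one, $\mathrm{BIC}_{\g}(G)>\mathrm{BIC}_{\g}(G^*)$ for every decomposable graph $G\neq G^*$ with $|G|\le q$; minimizing over any candidate list containing $G^*$ then returns $G^*$. Writing the difference as
\[
\mathrm{BIC}_{\g}(G)-\mathrm{BIC}_{\g}(G^*)=-2\big[\ell_{[n]}(\widehat\Theta_G)-\ell_{[n]}(\widehat\Theta_{G^*})\big]+\big(|G|-|G^*|\big)\big(\log n+4\g\log p\big),
\]
I would separate two regimes: the \emph{overfitting} graphs with $G\supseteq G^*$ (so that $\Theta^*\in\mathcal{M}_G$ and $G$ is a true model) and the \emph{underfitting} graphs omitting at least one edge of $G^*$. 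The decomposability hypothesis is the workhorse throughout: for decomposable $G$ the MLE $\widehat\Theta_G$ is available in closed form and the maximized log-likelihood factorizes over the cliques and separators of $G$, so that the log-likelihood ratio between two nested decomposable models is an explicit function of sample partial correlations.

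\emph{Overfitting.} Here I would use the clique--separator factorization to write $2[\ell_{[n]}(\widehat\Theta_G)-\ell_{[n]}(\widehat\Theta_{G^*})]$ as a sum of $d\coloneqq|G|-|G^*|$ terms, each of the form $-n\log(1-\widehat r^{\,2})$ for a sample partial correlation $\widehat r$ that, under the true model, is $\mathbf{O}_P(n^{-1/2})$, so each term behaves like a $\chi^2_1$ variable. The crux is a maximal inequality: for fixed $d$ there are at most $\binom{\binom p2}{d}\le p^{2d}$ candidate supergraphs, and a tail bound for the maximum of the associated statistics gives, uniformly, $-2[\ell_{[n]}(\widehat\Theta_G)-\ell_{[n]}(\widehat\Theta_{G^*})]\ge-(4+\mathbf{o}(1))\,d\log p$. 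Since $p\propto n^{\kappa}$ yields $\log n=(1+\mathbf{o}(1))\kappa^{-1}\log p$, the penalty gain is $d(\kappa^{-1}+4\g)\log p$, and the hypothesis $\g>1-\tfrac1{4\kappa}$ is exactly what forces $\kappa^{-1}+4\g>4$; hence the penalty strictly dominates the likelihood fluctuation and the displayed difference is bounded below by a positive multiple of $d\log p$.

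\emph{Underfitting.} When $G$ omits a true edge, $\Theta^*\notin\mathcal{M}_G$ and there is a population-level deficit: the smallest Kullback--Leibler divergence from the truth to a distribution in $\mathcal{M}_G$ is bounded below by a positive constant, because the omitted edge carries a partial correlation of magnitude at least $\theta_0$ while the bounded condition number keeps the relevant quantities nondegenerate. This yields $\tfrac1n[\ell_{[n]}(\widehat\Theta_{G^*})-\ell_{[n]}(\widehat\Theta_G)]\ge c>0$ with high probability for a constant $c$ depending only on $\theta_0$ and the condition-number bound, uniformly over all such $G$. Thus $-2[\ell_{[n]}(\widehat\Theta_G)-\ell_{[n]}(\widehat\Theta_{G^*})]$ is of order $n$, whereas the penalty saving is at most $q(\log n+4\g\log p)=\mathbf{o}(n)$, so the likelihood term wins. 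Uniformity over the (at most $\sim p^{2q}$) underfitting graphs follows by controlling $\|\widehat\Sigma-\Sigma^*\|$ on all submatrices of size at most $q+1$ through a union bound with Gaussian concentration, whose cost $\log p=\mathbf{o}(n)$ is negligible against the order-$n$ gap.

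\emph{Main obstacle.} I expect the delicate point to be the sharp maximal inequality in the overfitting step: one must show that the maximum, over the combinatorially many decomposable supergraphs, of sums of dependent $\chi^2$-type partial-correlation statistics exceeds the penalty threshold only with vanishing probability, and with the precise constant $4$ producing the stated boundary $\g>1-\tfrac1{4\kappa}$. It is exactly the decomposable structure---the closed-form MLE together with the exact partial-correlation representation of each single-edge likelihood ratio---that makes this constant computable; for non-decomposable candidates no such clean expression is available, which is why the theorem restricts to decomposable graphs.
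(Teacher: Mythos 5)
Your outline is essentially the route taken by the source of this result: the paper does not prove this theorem itself, but restates it from \cite{Foygel:2010} and notes that the proof ``relies on exact distribution theory applicable to decomposable graphs,'' which is precisely your strategy---the overfitting/underfitting split, the clique--separator factorization giving single-edge deviances of the form $-n\log(1-\widehat r^{\,2})$ with $\chi^2_1$-type tails and a union bound over at most $p^{2d}$ supergraphs (yielding the constant $4$ and the threshold $\g>1-\tfrac{1}{4\kappa}$), and an order-$n$ Kullback--Leibler deficit for graphs omitting a true edge. Your proposal is consistent with that proof and correctly identifies the decomposability assumption as what makes the exact partial-correlation representation, and hence the sharp constant, available.
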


Together with consistency results on the graphical Lasso and on
neighborhood selection, this result implies that combining EBIC and
either graphical Lasso or neighborhood selection gives a consistent
method for edge selection under the assumptions stated.  While our
proof of the theorem relies on exact distribution theory applicable to
decomposable graphs, we conjecture that the stated result holds
without the restriction to decomposable graphs.  

\cite{Gao:2011} propose EBIC-based tuning of the so-called SCAD
penalization method for graphical model selection and give a
consistency result taylored to this method.  The version of the EBIC
studied by these authors has the maximum likelihood estimator replaced
by the SCAD estimator, and the model search is restricted to a subset
of the SCAD regularization path.  No decomposability assumptions were
needed by \cite{Gao:2011}.

\subsection{Ising models}
\label{sec:Ising} 

In the setting of binary observations
$X_{1\bul},\dots,X_{n\bul}\in\{0, 1\}^p$, the Ising model consists of
probability mass functions of the form
\begin{equation}\label{eq:Ising}
  \PP\big((X_{11},\dots,X_{1p})=(x_1,\dots,x_p)\big)\propto\exp\bigg\{\sum_j
  \zeta_j x_j+ \tfrac{1}{2}\sum_{j\neq k}  \Theta_{jk} x_j
  x_k\bigg\}\;,
\end{equation}
where $\zeta\in\R^p$ is any vector, and for identifiability we
constrain $\Theta\in\R^{p\times p}$ to be a symmetric matrix with zero
diagonal.  This model originated in physics to model states of
particles, where informally we have $\Theta_{jk}>0$ if
particles $j$ and $k$ prefer to be in the same state, and
$\Theta_{jk}<0$ if particles $j$ and $k$ prefer to be in
different states. For background and applications, compare e.g.\ 
\citet{Kindermann:1980}.  

In the Ising model, the conditional distribution of $X_{1j}$ given
$\{X_{1k}:k\neq j\}$ comes from the logistic
model---from~(\ref{eq:Ising}), we obtain
\[
\PP\big(X_{1j}=x_j|\{X_{1k}=x_k:k\neq j\}\big)\propto
\exp\bigg\{\bigg(\zeta_j+\sum_{k\neq j}\Theta_{jk} x_k\bigg) x_j\bigg\}\;,
\]
and therefore the log-odds are
\[
\log\left(\frac{\PP\big(X_{1j}=1|\{X_{1k}:k\neq
    j\}\big)}{\PP\big(X_{1j}=0|\{X_{1k}:k\neq j\}\big)}\right)=
\zeta_j+\sum_{k\neq
  j}\Theta_{jk} X_{1k}\;.
\]
To recover the true graph $G^*$ that describes the dependencies among
the variables (or equivalently, the sparsity pattern in the true
matrix $\Theta^*$), we can thus use neighborhood selection with the
logistic Lasso, which finds
\begin{align*}
\widehat{\beta}_j^{\rho}&=\arg\min{}_{\b_j}\bigg\{-\sum_i \log \PP\big(X_{ij} | \{X_{ik}:k\neq j\},\beta_j\big)+ \rho\|\beta_j\|_1\bigg\}\\
&=\arg\min{}_{\b_j}\bigg\{-X_{ij} \cdot \bigg(\b_{j0}+\sum_{k\neq j}X_{ik}\b_{jk}\bigg) + \log\bigg(1+\exp\Big\{\b_{j0}+\sum_{k\neq j}X_{ik}\b_{jk}\Big\}\bigg)+ \rho\|\beta_j\|_1\bigg\}\;.
\end{align*}
The resulting graph estimate $\widehat{G}^{\rho}$ has an edge between
nodes $j$ and $k$ based on the values of $\widehat{\b}^{\rho}_{jk}$
and $\widehat{\b}^{\rho}_{kj}$, using either an \textsc{and} or an \textsc{or} rule;
compare also \cite{Hofling:2009}.

Tuning the parameter $\rho$ can be done using the EBIC for logistic
regression.  Our results for consistency of the EBIC for logistic
regression then imply consistency guarantees for neighborhood selection
with EBIC tuning.  We assume that the following conditions hold (for
constants $q$ and $c$):
\begin{itemize}
\item[(C1)] The growth of $p$ is subexponential, that is,
  $\log(p)=\mathbf{o}(n)$, with 
$\kappa\coloneqq \lim\sup\log_n(p)\in[0,\infty]$. 
\item[(C2)] The true graph $G^*$ has degree bounded by $q$, that is,
  each node $j$ has a neighborhood of cardinality $\left|\{k:(j,k)\in
    G^*\}\right|\leq q$.
\item[(C3)] The true parameters are bounded with $\max{}_j
  |\zeta^*_j|\le c$ and $\max{}_{j,k}|\Theta^*_{jk}|\leq c$.
\item[(C4)] The signal is bounded away from zero such that
  \[
  \sqrt{\frac{\log(np)}{n}}=\mathbf{o}\left(\min_{(j,k)\in
      G^*}|\Theta^*_{jk}|\right)\;. 
  \]
\end{itemize}
The following theorem gives a precise statement of the consistency
properties of
the EBIC for 
edge selection in the Ising model.

\begin{theorem}\label{thm:Ising}
  Assume that conditions~(C1)-(C4) hold. Let
  $X_{1\bul},\dots,X_{n\bul}\in\{0,1\}^p$ be i.i.d.\ draws from an
  Ising model with parameters $\zeta^*\in\R^p$ and
  $\Theta^*\in\R^{p\times p}$, where $\Theta^*$ is symmetric with zero
  diagonals. Let $G^*$ be the graph with edges indicating the nonzero
  entries of $\Theta^*$, and for each node $j$, let $\mathcal{S}^*_j$
  denote its true neighborhood, that is, $\mathcal{S}_j=\{k\neq
  j:\Theta^*_{jk}\neq 0\}$.  Choose three scalars $\a,\b,\g$ to
  satisfy
 \begin{equation*}
   \left\{\text{\begin{tabular}{ll}\T\B
         $\g>1-\tfrac{1}{2\kappa}+\b+\tfrac{\a}{\kappa}$,&if 
         $\kappa>0$,\\ 
         \T\B$\a\in\left(0,\tfrac{1}{2}\right)$ and $\b>0$,&if
         $\kappa=0$.\\\end{tabular}}\right.
 \end{equation*}
 Then, for sufficiently large $n$, the event that the inequalities
 \[
 \mathrm{BIC}_{\g}(\mathcal{S}^*_j)<\min\left\{\mathrm{BIC}_{\g}(\mathcal{S}_j)
 :  \mathcal{S}_j\not\ni j, \ \mathcal{S}_j\neq \mathcal{S}_j^*, \
 |\mathcal{S}_j|\leq
 q\right\}-\log(p)\cdot\left(\g-\left(1-\frac{1}{2\kappa}+\b+\frac{\a}{\kappa}\right)\right)
 \]
 hold simultaneously for all $j$ has probability at least
 $1-n^{-\a}p^{-(\b-1)}$. In particular, the EBIC is consistent for
 neighborhood selection (simultaneously for all nodes) in the Ising
 model, whenever $\g>2-\frac{1}{2\kappa}$.

\end{theorem}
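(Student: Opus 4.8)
The plan is to reduce edge selection in the Ising model to $p$ separate applications of Theorem~\ref{thm:BIC_consistent}, one for each node, combined by a union bound. As shown in the derivation preceding the theorem statement, the conditional law of $X_{1j}$ given $\{X_{1k}:k\neq j\}$ is exactly a logistic GLM whose covariates are the remaining coordinates $(X_{1k})_{k\neq j}$ (together with a constant covariate carrying the intercept $\zeta_j$) and whose true coefficient vector has support $\mathcal{S}^*_j$. Because the rows $X_{1\bul},\dots,X_{n\bul}$ are i.i.d.\ draws from the Ising model, the covariate vectors feeding node $j$'s regression are themselves i.i.d., so the random-covariate framework of Section~\ref{sec:prelim} applies; indeed, this is precisely the situation for which Theorem~\ref{thm:BIC_consistent} was built.

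First I would check that conditions (A1) and (B1)--(B5) hold for each node's regression, with constants independent of $j$ and $p$. Assumption (A1) holds with $\AA=1$ since $X_{ik}\in\{0,1\}$, and the logistic cumulant $\bb$ satisfies the third-derivative condition noted after (A2). Condition (B1) is exactly (C1); the degree bound (C2) gives (B2) through $|\mathcal{S}^*_j|\leq q$; the parameter bound (C3) together with (C2) gives (B4), as the coefficient vector for node $j$ has at most $q+1$ nonzero entries each bounded by $c$; and (C4) is (B5). Since the number of covariates in each regression is $p-1$, the relevant growth rate $\lim\sup\log_n(p-1)$ equals $\kappa$, so the thresholds are unchanged.

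The main obstacle is verifying (B3), a lower bound on the smallest eigenvalue of $\EE[X_{1J}X_{1J}^T]$ that is uniform over all $|J|\leq 2q$ and over $p$. The upper bound is trivial because the entries lie in $[0,1]$. For the lower bound I would show that every configuration of any small set of coordinates carries probability bounded away from zero, uniformly in $p$: conditioning on $X_{J^c}$ and invoking (C2)--(C3), the local field acting on the coordinates of $J$ is bounded in absolute value by a constant depending only on $|J|$, $q$, and $c$, so the conditional probability of any $x_J\in\{0,1\}^J$, and hence its marginal probability, is bounded below. Non-degeneracy of this discrete distribution on $\{0,1\}^J$ then forces the smallest eigenvalue of $\EE[X_{1J}X_{1J}^T]$ away from zero, yielding (B3); adjoining the constant intercept covariate does not disturb this, since the augmented second-moment matrix remains non-degenerate whenever the marginals are bounded away from $0$ and $1$, and the intercept, being present in every model, cancels from all EBIC comparisons.

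With the hypotheses of Theorem~\ref{thm:BIC_consistent} in force for every node, that theorem supplies, for each fixed $j$ and large $n$, the stated EBIC comparison for node $j$ with probability at least $1-n^{-\a}p^{-\b}$ under (A1). A union bound over the $p$ nodes gives the simultaneous event with probability at least $1-p\cdot n^{-\a}p^{-\b}=1-n^{-\a}p^{-(\b-1)}$, as claimed. To read off consistency, I would take $\a>0$ small and $\b$ slightly above $1$, so that $n^{-\a}p^{-(\b-1)}\to 0$ while the feasibility constraint $\g>1-\tfrac{1}{2\kappa}+\b+\tfrac{\a}{\kappa}$ tends to $\g>2-\tfrac{1}{2\kappa}$ as $\a\to0$ and $\b\to1$. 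The extra additive $1$ relative to the regression threshold $1-\tfrac{1}{2\kappa}$ is precisely the cost of the union bound over the $p$ neighborhoods, which forces $\b>1$.
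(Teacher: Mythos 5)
Your proposal is correct, and its skeleton is exactly that of the paper's proof: reduce to $p$ per-node logistic regressions with random covariates, observe that (A1) and (B1), (B2), (B4), (B5) follow directly from (C1)--(C4) with constants uniform in $j$, invoke Theorem~\ref{thm:BIC_consistent} for each node, and take a union bound over the $p$ nodes --- which is precisely where the extra factor of $p$, hence the need for $\b>1$ (or $\b$ near $1$) and the shifted threshold $\g>2-\frac{1}{2\kappa}$, comes from. Where you genuinely diverge is in the verification of (B3), the only non-bookkeeping step. The paper fixes a unit vector $u$ supported on $J$, selects the coordinate $j_0$ with $u_{j_0}^2\geq \frac{1}{2q}$, and uses the law of total variance to bound $\EE\left[(X_{1J}^Tu)^2\right]\geq u_{j_0}^2\,\EE\left[\mathrm{Var}\left(X_{1j_0}\mid X_{1,J\setminus\{j_0\}}\right)\right]$; the conditional variance is then computed from the explicit logistic form of the conditional law and bounded below by $\min_{|t|\leq a_3(q+1)} e^t/(1+e^t)^2$ using (C2)--(C3). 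You instead lower-bound the probability of \emph{every} configuration $x_J\in\{0,1\}^J$ uniformly in $p$ and $J$ (bounded local fields plus at most $2^{2q}$ configurations), and deduce the eigenvalue bound from non-degeneracy. That route is sound, but the final deduction should be made quantitative rather than appealing to non-degeneracy alone: from $\PP\{X_{1J}=x_J\}\geq\delta$ for all $x_J$ one gets $u^T\EE\left[X_{1J}^{}X_{1J}^T\right]u\geq \delta\max_{x_J}(x_J^Tu)^2\geq \delta/4$, since for a unit vector $u$ either its positive or its negative part has $\ell_1$-mass at least $\tfrac12$, and choosing $x_J$ as the corresponding indicator realizes this. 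Comparing the two: your argument is more elementary and applies to any discrete model with bounded degree and bounded interactions, not just via the logistic variance formula, but it yields a worse constant (of order $2^{-2q}e^{-O(q^2 c)}$ rather than a single-coordinate variance bound); you also explicitly dispose of the intercept covariate, a point the paper's proof passes over in silence.
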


\subsection{Experiment for the Ising model}

We compared the BIC, the EBIC with $\g=0.25$ and $\g=0.5$, 10-fold
cross-validation, and stability selection as in
\cite{Meinshausen:2010} on the task of edge selection under an Ising
model for precipitation data from weather stations across four states
in the midwest region of the U.S.: Illinois, Indiana, Iowa, and
Missouri. Performance is measured relative to the true geographical
layout of the weather stations, which is ``unknown'' to the 
procedures we compare.

\subsubsection{Data and methods for model selection}

We used data from the United States Historical Climatology Network
\citep{WeatherData}.\footnote{Available at
  \texttt{http://cdiac.ornl.gov/ftp/ushcn\_daily/}} The data consists
of weather-related variables that were recorded on a daily basis.  We
specifically gathered the precipitation data, which gives the total
amount of precipitation for each day.  Trying to limit the effects of
temporal dependencies between successive observations, we took data
from the 1st and 16th of each month.  These are then treated as
independent.
We removed weather stations where data availability was low and
discarded observations with missing values for any of the remaining
weather stations.  A total of 278 days and 89 stations remained in the
final data set. Next, we hypothesized a ``true'' graph by computing
the Delaunay triangulation of these 89 weather stations, based on
their geographic locations,
 using the \texttt{delaunay} command in
 \cite{MATLAB}. Figure~\ref{fig:MidwestMap} shows a map with the
 resulting undirected graph.

\begin{figure}[t]
\begin{center}
\includegraphics[width=5cm]{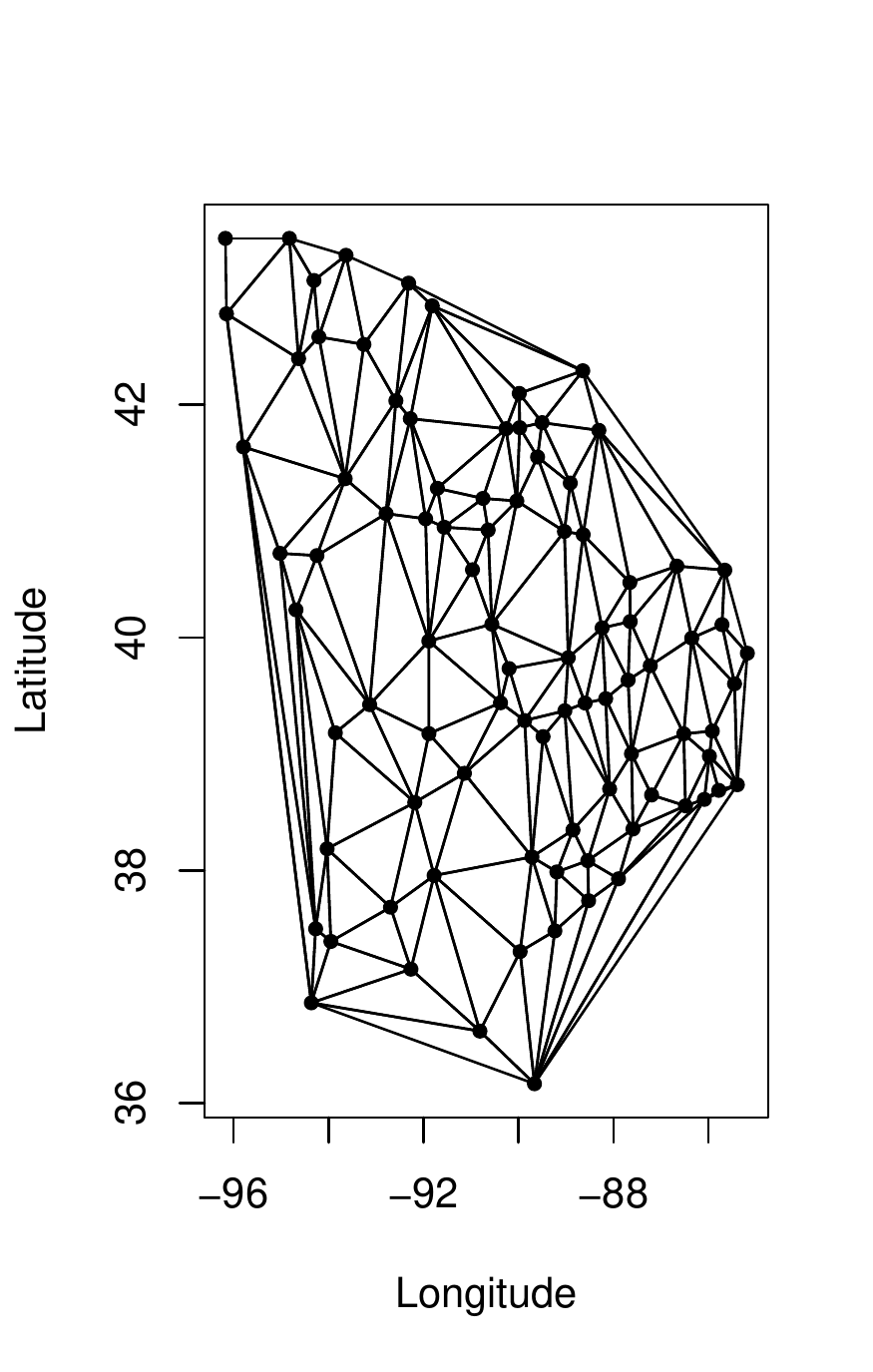}
\caption{Delaunay triangulation for 89 weather stations in Illinois,
  Indiana, Iowa, and Missouri.} 
\label{fig:MidwestMap}
\end{center}
\end{figure}

For each weather station $j$, we define binary variables $X_{ij}$
taking values 1 or 0 depending on whether or not there was a
positive amount of rainfall at weather station $j$ on day $i$.  For
each one of the stations $j$, we then applied each of the five methods
to perform a sparse logistic regression that has response vector
$X_{\bul j}$ and covariates $\{X_{\bul k}:k\neq j\}$.  Our method for
selecting a neighborhood for weather station $j$, for each of the five
methods, is identical to our methods for the regression experiment on
email data (see Section~\ref{sec:EmailMethods}).  Finally, we combined
each method with the \textsc{or} rule and with the \textsc{and} rule to produce a sparse
graph, for a total of ten methods.

\subsubsection{Results}

To evaluate the methods, we first treat the graph obtained via the
Delaunay triangulation as the ``true'' underlying graphical model.
Table~\ref{table:WeatherData} shows the results for each method,
stated in terms of positive selection rate (PSR) and false discovery
rate (FDR), relative to the ``true'' Delaunay triangulation graph.
These results are also displayed in
Figure~\ref{fig:GraphResults_PSR_FDR}, while the graphs in
Figure~\ref{fig:RecoveredGraphs} show the recovered graphs for each of
the methods, combined with the \textsc{and} or \textsc{or} rules.

\begin{table}[t]
        \caption{Positive selection rate and false discovery rate in
          the weather data experiment.} 
\begin{center}
\begin{tabular}{|@{\,}c@{\,}||r|r||r|r|}
\hline
&\multicolumn{2}{c||}{\textsc{or} rule}&\multicolumn{2}{c|}{\textsc{and} rule}\\
&PSR&FDR&PSR&FDR\\\hline\hline
BIC$_{0.0}$& 50.99 & 47.13 & 36.36 & 30.83\\ \hline
BIC$_{0.25}$ & 45.45 & 39.47& 30.04 & 28.97\\ \hline
BIC$_{0.5}$& 42.29 & 33.95 & 23.32 & 26.25\\ \hline
Cross-validation & 69.17 & 76.42 & 59.29 & 65.83 \\ \hline
Stability selection & 24.51 & 26.19 & 13.44 & 26.09\\ \hline
        \end{tabular}\end{center}
        \label{table:WeatherData}
        \end{table}
        
\begin{figure}[t]
\begin{center}
\includegraphics[width=4in]{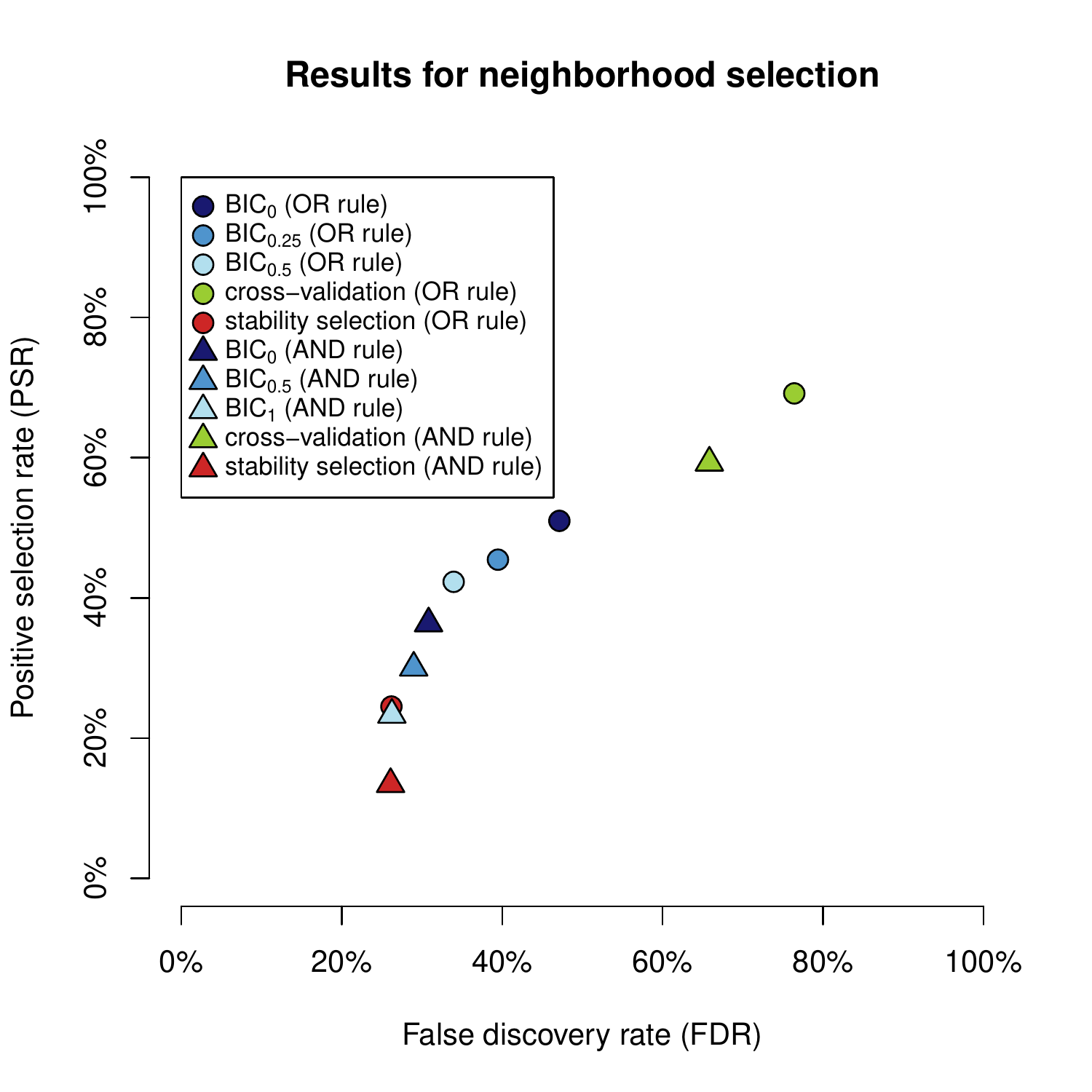}
\caption{Performance of each method, under the \textsc{or} rule and the \textsc{and}
  rule, where the true graph is defined via the Delaunay
  triangulation.} 
\label{fig:GraphResults_PSR_FDR}
\end{center}
\end{figure}

\begin{figure}[t]
\begin{center}
\includegraphics[width=15cm]{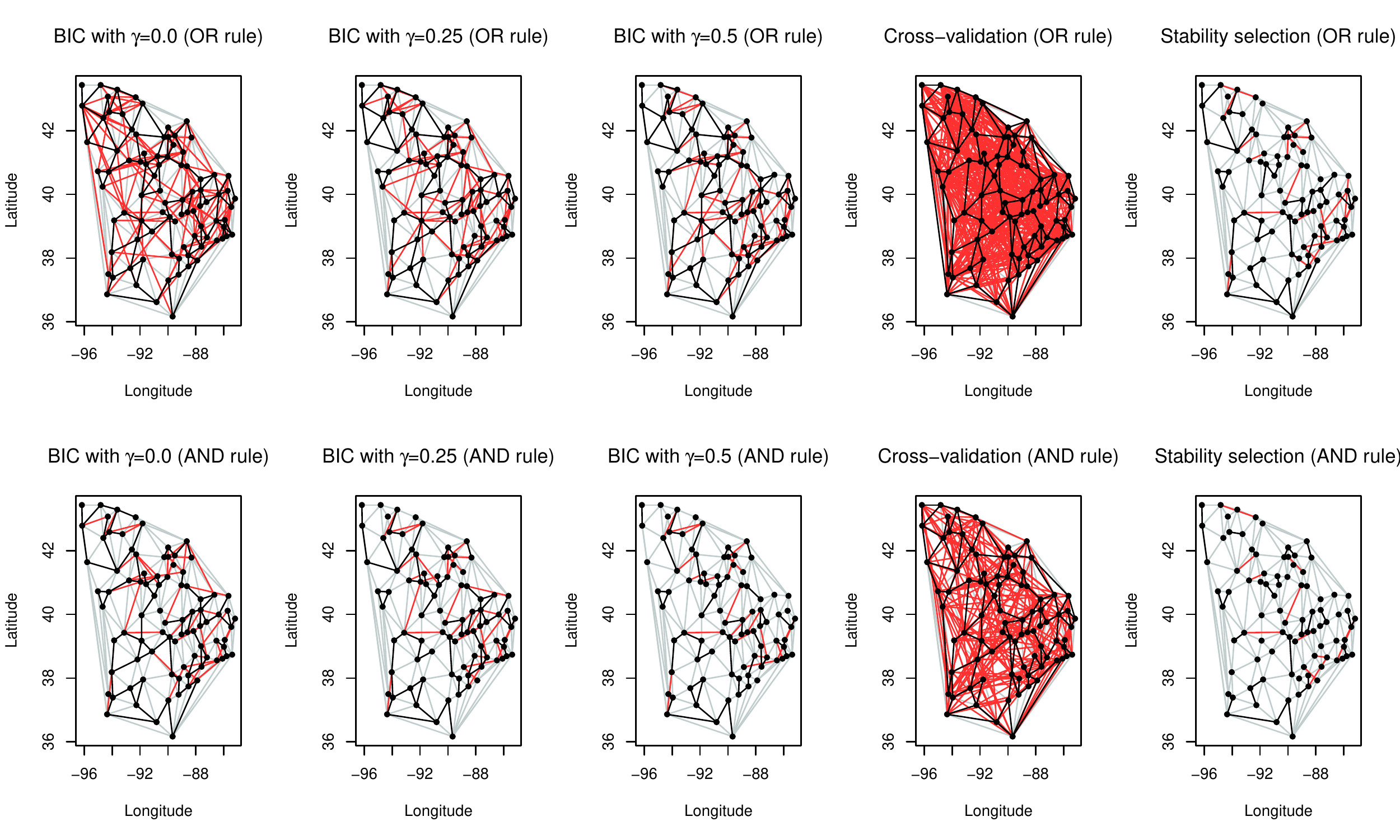}
\caption{Graphs recovered under each method. (Black edges indicate
  true positives, red edges indicate false positives, and light gray
  edges indicate false negatives, i.e.\ true edges that were not
  recovered by the method, where the true graph is defined via the
  Delaunay triangulation.)} 
\label{fig:RecoveredGraphs}
\end{center}
\end{figure}

We see that cross-validation leads to a PSR that is somewhat higher
than that of the other methods, under either an \textsc{and} or an \textsc{or} rule.
However, this comes at a drastically higher FDR.  For the EBIC, as we
increase $\gamma$, we reduce the FDR at a cost of a lower PSR, as
expected.  Stability selection appears to be a more conservative
method than $\mathrm{BIC}_{\g}$ for $\g=0.0,0.25,0.5$, with lower FDR
and lower PSR, and was substantially more computationally expensive.
While not shown, setting $\g=1.0$ with the EBIC yielded very similar
results to stability selection, in this experiment.

The edges of the Delaunay triangulation likely capture the strongest
dependencies, but it is reasonable to expect additional dependencies
that are not captured by the edges in the triangulation.  One way to
compare the methods without referring to the Delaunay triangulation is
to use the geographic distance between each pair of weather stations.
For each method, we use Gaussian smoothing (scale: standard deviation
= 10 miles) to estimate, as a function of $d$, the probability that
the method will infer an edge between two nodes that are $d$ miles
apart.  The resulting functions are plotted in
Figure~\ref{fig:GraphSmoothedProbs}, where we also show the same
smoothed function calculation for the graph defined by the Delaunay
triangulation.

We observe that the smoothed function for the cross-validation methods
(under either the \textsc{or} or the \textsc{and} rule) does
not decay to zero as distance increases.  That is, in this experiment,
the cross-validation methods tended to select some positive proportion
of edges between nodes that are arbitrarily far apart, which is undesirable. 
To a lesser extent, the same
problem occurs for the (original) BIC combined with the \textsc{or} rule. The
other methods, in contrast, yield functions that do decay to zero as
distance increases.  We see also that for two nearby weather stations,
the extended BIC with $\gamma=0.25$ or $\gamma=0.5$ combined with the
\textsc{or} rule, are both significantly more likely to select an edge than the
remaining methods, which are more conservative. Overall, the
performance of the extended BIC compares favorably to the other
methods, with a moderately good rate of edge selection for nearby
weather stations, and with probability of edge selection decaying to
zero when the distance between a pair of weather stations is large.

\begin{figure}[t]
\begin{center}
\includegraphics[width=16cm]{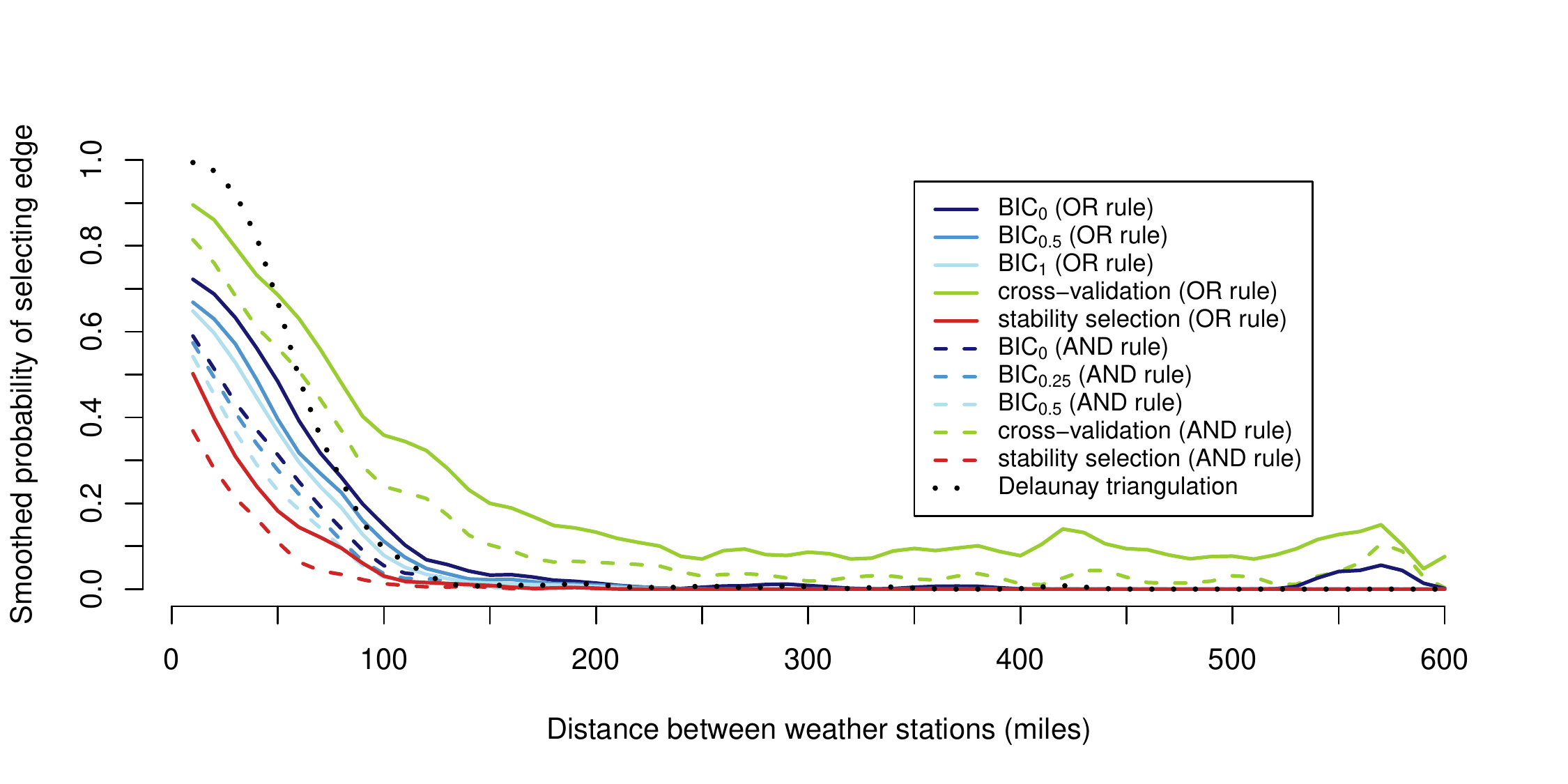}
\caption{Smoothed probability of selecting edges as a function of distance, for each method under the \textsc{or} rule and the \textsc{and} rule.}
\label{fig:GraphSmoothedProbs}
\end{center}
\end{figure}

\section{Proof sketches for theorems}\label{sec:proofs}

To prove our Theorems, we use Taylor series to approximate
log-likelihood functions, and Laplace approximations to approximate
integrated likelihoods. In Section~\ref{sec:Prelim} we introduce
notation and state two technical lemmas bounding various quantities
relating to the log-likelihood function. In
Sections~\ref{proof:Bayes=BIC}, \ref{proof:BIC_consistent},
and~\ref{proof:Ising} we outline the proofs of
Theorems~\ref{thm:Bayes=BIC}, \ref{thm:BIC_consistent},
and~\ref{thm:Ising}, respectively.  Full proofs are in the
Appendix.

\subsection{Preliminaries}\label{sec:Prelim}
Let $s_{[n]}(\phi)=\nabla \log L_{[n]}(\phi)\in\R^p$ be the gradient of the log
likelihood (or score) function, and let $H_{[n]}(\phi)=-\nabla
s_{[n]}(\phi)\in\R^{p \times p}$ be the Hessian. Write $s_J(\phi)$ and
$H_J(\phi)$ to denote the sub-vector and sub-matrix, respectively,
indexed by $j\in J$.

The following lemma gives bounds that will be important in the proofs
of the Theorems.

\begin{lemma}\label{lem:WHP}
  Fix any $\a,\b>0$.  Assume (B1)-(B5) hold, and that either (A1) or
  (A2) holds.  For sufficiently large $n$, with probability at least
  $1-n^{-\a}p^{-\b}$ under (A1), or with probability at least
  $1-n^{-\a}p^{-\b}-4K^{K+1}n^{-\frac{K-2\kappa}{2}}$ under (A2), the
  following statements are all true.  The symbols $C_1$, $C_2$,
  $\l_1^*$, $\tau$, $R$, $\l_1$, $\l_2$, and $\l_3$ appearing in the
  statements represent constants that do not depend on $n$, $p$, or on
  the data, but generally are functions of other constants appearing
  in our assumptions.
  \begin{itemize}
  \item[(i)]The gradient of the likelihood is bounded at the true
    parameter vector $\phi^*$:
    \begin{equation*}\left\|\left(H_J(\phi^*)^{-\nicefrac{1}{2}}\right)s_J(\phi^*)\right\|_{2}<\sqrt{2\left(1+\e_n\right)\left|J\backslash J^*\right|\log\left(n^{\a}p^{1+\b}\right)}\text{ for all $J\supsetneq J^*$ with $|J|\leq 2q$}\;,\end{equation*}
    where
    $\e_n=C_1\sqrt{\frac{\log\left(n^{\a}p^{1+\b}\right)}{n}}+C_2\frac{1}{\log(n)}=\mathbf{o}(1)$.
  \item[(ii)] Likelihood is upper-bounded by a quadratic function:
    \begin{equation*}\log \left(\frac{L_{[n]}(\phi^*+\psi_J)}{L_{[n]}(\phi^*)}\right)\leq -\frac{\l_1^*n}{2}\|\psi_J\|_2\left(\min\{1,\|\psi_J\|_2\}-\tau\sqrt{\frac{\log\left(n^{\a}p^{1+\b}\right)}{n}}\right)\text{ for all $|J|\leq 2q$, $\psi_J\in\R^J$}\;.\end{equation*}
  \item[(iii)]For all sparse models, the MLE lies inside a compact set:
    \begin{equation*}\big\|\phatj\big\|_2\leq R\text{ for all $|J|\leq 2q$}\;.\end{equation*}
  \item[(iv)]The eigenvalues of the Hessian are bounded from above and below, and local changes in the Hessian are bounded from above, on the relevant compact set:
    \begin{align*}&\text{For all $|J|\leq 2q$, $\|\phi_J\|_2\leq R+1$, } \lambda_1\mathbf{I}_J\preceq \tfrac{1}{n}H_J(\phi_J)\preceq \lambda_2 \mathbf{I}_J\;,\\
      &\text{and for all $\|\phi_J\|_2,\|\phi'_J\|_2\leq R+1$, } \tfrac{1}{n}\left(H_J(\phi_J)-H_J(\phi'_J)\right)\preceq \|\phi_J-\phi'_J\|_2 \lambda_3\mathbf{I}_J\;.\end{align*}
\end{itemize}
\end{lemma}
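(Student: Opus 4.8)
The plan is to establish all four statements on a single good event, proving them in the order (iv), (i), (ii), (iii), since each later bound leans on the Hessian control in (iv). Wherever only the randomness in the responses matters I would argue conditionally on the design matrix $X$. Case~(A2) I would reduce to the bounded case by a preliminary truncation: replace each $X_{ij}$ by its truncation at a slowly growing threshold, note that by Markov's inequality and the $6K$-th moment bound in~(A2) the truncation changes nothing except on an event of probability $\mathbf{O}(K^{K+1}n^{-(K-2\kappa)/2})$---the source of the extra term in the (A2) probability---and then run the remaining arguments as under~(A1).

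For statement~(iv), write $\tfrac{1}{n}H_J(\phi_J)=\tfrac{1}{n}\sum_i \bb''(X_{iJ}^T\phi_J)\,X_{iJ}X_{iJ}^T$. On the compact ball $\|\phi_J\|_2\le R+1$ the factor $\bb''$ is bounded above and below (using boundedness of the covariates under~(A1), or the truncation together with the moment hypotheses under~(A2)), so it suffices to compare the empirical second-moment matrix $\tfrac1n\sum_i X_{iJ}X_{iJ}^T$ with its population counterpart, whose eigenvalues lie in $[a_1,a_2]$ by~(B3). A matrix Bernstein inequality gives an exponentially small deviation for each fixed $J$, and a union bound over the at most $p^{2q}$ sets with $|J|\le 2q$---affordable because $\log(p)=\mathbf{o}(n)$ by~(B1)---yields the uniform eigenvalue bounds $\lambda_1\mathbf{I}_J\preceq\tfrac1nH_J(\phi_J)\preceq\lambda_2\mathbf{I}_J$. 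The Lipschitz bound follows identically after the mean value estimate $|\bb''(X_{iJ}^T\phi_J)-\bb''(X_{iJ}^T\phi_J')|\le \|X_{iJ}\|_2\,\|\phi_J-\phi_J'\|_2\,\sup|\bb'''|$, where $\sup|\bb'''|$ is controlled through the quantity $\BB_K$ of~(A2) (and trivially under~(A1)); this produces $\lambda_3$.

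For statement~(i) I would condition on $X$ and work on the event from~(iv). The terms $\xi_i:=Y_i-\bb'(X_{i\bul}^T\pstar)$ are then conditionally independent and mean zero with conditional variance $\bb''(X_{i\bul}^T\pstar)$, so $s_J(\pstar)=\sum_i \xi_i X_{iJ}$ and the standardized vector $H_J(\pstar)^{-\nicefrac{1}{2}}s_J(\pstar)$ has conditional covariance exactly $\mathbf{I}_J$; thus its squared norm is a sum of conditionally independent pieces with conditional mean $|J|$. Applying a Bernstein-type tail bound to this quadratic form, the threshold $2(1+\e_n)|J\setminus J^*|\log(n^{\alpha}p^{1+\beta})$ is calibrated to make the per-$J$ tail probability of order $(n^{\alpha}p^{1+\beta})^{-|J\setminus J^*|}$. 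Summing over the $\binom{p-|J^*|}{k}\le p^{k}$ sets with $|J\setminus J^*|=k$ collapses the geometric series $\sum_{k\ge1}(n^{\alpha}p^{\beta})^{-k}$ to order $n^{-\alpha}p^{-\beta}$, which is exactly where the ``$+1$'' in the exponent of $p^{1+\beta}$ is spent. The correction $\e_n=C_1\sqrt{\log(n^{\alpha}p^{1+\beta})/n}+C_2/\log(n)$ absorbs the Bernstein variance correction and the gap between $|J|$ and $|J\setminus J^*|$ in the leading constant, and is $\mathbf{o}(1)$ by~(B1).

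For statement~(ii), a second-order Taylor expansion gives $\log\big(L_{[n]}(\pstar+\psi_J)/L_{[n]}(\pstar)\big)=s_J(\pstar)^T\psi_J-\tfrac12\psi_J^T H_J(\tilde\phi)\psi_J$ for some $\tilde\phi$ on the segment. I would bound the linear term by Cauchy--Schwarz using a coordinatewise maximal inequality for the score (a simpler cousin of~(i), giving $\max_j|s_j(\pstar)|\lesssim\sqrt{n\log(np)}$ and hence $|s_J(\pstar)^T\psi_J|\lesssim n\sqrt{\log(np)/n}\,\|\psi_J\|_2$), and bound the quadratic term below by the lower eigenvalue estimate in~(iv); this yields the claim for $\|\psi_J\|_2\le1$, while for $\|\psi_J\|_2>1$ the concavity of the log-likelihood (the cumulant function $\bb$ is convex) reduces matters to the unit sphere, which is the role of the $\min\{1,\|\psi_J\|_2\}$ factor. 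Statement~(iii) is then immediate: by~(ii) the ratio is strictly negative once $\|\psi_J\|_2\ge1$ for large $n$, so every maximizer lies within a fixed ball, with radius $R$ a constant determined by the constants above and by $a_3$ in~(B4). I expect the main obstacle to be statement~(i): extracting the sharp leading constant $1+\e_n$ in the exponential tail so that the union bound over the exponentially many models returns precisely the advertised probability, and doing so uniformly in the unbounded regime~(A2), where the truncation must be interleaved without spoiling the tail.
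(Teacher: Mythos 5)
Your high-level skeleton (a single good event, proving Hessian control first and then score, quadratic bound, and MLE localization) matches the paper's, and your accounting of the union bound in (i) --- per-model tails of order $(n^{\a}p^{1+\b})^{-|J\backslash J^*|}$, multiplied by $p^{|J\backslash J^*|}$ models, summing to $n^{-\a}p^{-\b}$ --- is exactly right. But there are two genuine gaps. The first is your treatment of (A2) by global truncation at a slowly growing threshold $T_n$. To make the truncation event negligible at the advertised rate you need $T_n$ to grow polynomially in $n$ (a union bound over all $np$ entries with only $6K$-th moments forces roughly $T_n\gtrsim n^{1/12}$), and then your lower bound on the Hessian becomes $\lambda_1\propto\inf\{\bb''(\theta):|\theta|\lesssim T_n\}$, which for the logistic model decays like $e^{-cT_n}\to 0$. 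The lemma requires $\lambda_1$ to be a \emph{constant}, and this constant enters all downstream results (the constant $C$ in Theorem~\ref{thm:Bayes=BIC}, the BIC gap $\tfrac{\l_1^* n}{4}\min_j|\phi^*_j|^2$ in Theorem~\ref{thm:BIC_consistent}), so a vanishing $\lambda_1$ is fatal. The paper avoids this with a block argument (Lemma~\ref{lem:pos_def_H}): partition the $n$ observations into blocks of \emph{constant} size $N$, use Markov's inequality to show each block has probability $\ge 0.6$ of simultaneously (a) having an empirical second-moment matrix within a constant factor of its mean and (b) having all its covariate entries below a \emph{constant} threshold, then apply a Chernoff bound so that at least half the blocks are good; since $H_J(\phi)$ is a sum of positive semidefinite terms, the bad blocks can simply be discarded, and $\bb''$ is bounded below by a constant on the good ones. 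Relatedly, your matrix-Bernstein-plus-union-over-$p^{2q}$-models step only has exponential per-model concentration under (A1); under (A2) the paper's upper/Lipschitz bounds (Lemma~\ref{lem:upper_bound_H}) instead use per-coordinate polynomial moment bounds with a union over only $2p$ events, which is why the (A2) probability degrades to $1-\mathbf{O}(K^{K+1}pn^{-K/2})$ rather than staying exponential.

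The second gap is the one you yourself flag: the constant $2$ in the threshold $2(1+\e_n)|J\backslash J^*|\log(n^{\a}p^{1+\b})$ of (i). This constant is not cosmetic --- it is what produces the consistency threshold $\g>1-\tfrac{1}{2\kappa}$ in Theorem~\ref{thm:BIC_consistent}; a generic Bernstein/Hanson--Wright bound for the quadratic form $s_J^TH_J^{-1}s_J$ has an unspecified absolute constant in the exponent and would only yield consistency for inflated $\g$. The paper's mechanism, which your proposal lacks, is to reduce to \emph{linear} functionals: by Chen and Chen's covering lemma there is a finite set $\UJ$ of unit vectors, of constant cardinality $U_0$, with $\|v\|_2\le\sqrt[4]{1+\e}\max_{u\in\UJ}u^Tv$; for each fixed $u$ the tail of $u^TH_J(\phi^*)^{-\nicefrac{1}{2}}s_J(\phi^*)$ is controlled by an exact Chernoff bound using the exponential-family identity $\EE[\exp\{Y_i\,X_{iJ}^T\psi_J\}\,|\,X]=\exp\{\bb(X_{i\bul}^T(\phi^*+\psi_J))-\bb(X_{i\bul}^T\phi^*)\}$, whose second-order Taylor expansion delivers exactly $e^{-A^2/2}$ up to a third-derivative correction absorbed into $\e_n$ (the $C_2/\log(n)$ part of $\e_n$ pays for $\log U_0$). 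Finally, a smaller point: (iii) is not immediate from (ii), since for $J\not\supset J^*$ the likelihood at $\phatj$ is typically \emph{below} $L_{[n]}(\phi^*)$, so negativity of the log-ratio alone does not localize the maximizer; the paper additionally compares $L_{[n]}(\phatj)\ge L_{[n]}(\mathbf{0})$ and proves a lower bound of the form $\log L_{[n]}(\mathbf{0})-\log L_{[n]}(\phi^*)\ge -cn$ (using the score bound, the Hessian upper bound, and (B4)), which combined with (ii) pins down the radius $R$.
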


We now state a second Lemma, which relates specifically to lower-bounding the eigenvalues of the Hessian, and may be of independent interest, as it holds under much weaker assumptions than those used in our other results.
\begin{lemma}\label{lem:pos_def_H}
Fix $J$ with $|J|=2q$, and radius $R>0$. Assume
 $\lambda_{\min}\left(\EE\left[X_{1J}^{}X_{1J}^T\right]\right)\geq a_1>0$ and
 $\sup_{j\in J} \EE\left[\left|X_{1j}\right|^4\right]\leq m$.
If $n$ is sufficiently large, then with probability at least $1-e^{-\left(150\cdot \left\lceil80q^2ma_1^{-2}\right\rceil\right)^{-1}n}$, for all $\phi=\phi_J$ with $\|\phi\|_2\leq r$, 
$$H_J(\phi)\succeq n\mathbf{I}_J\cdot \frac{a_1}{4} \inf\left\{\bb''(\theta):|\theta|\leq 20q^2r\sqrt{m}\left\lceil 80q^2ma_1^{-2}\right\rceil\right\}\;.$$
\end{lemma}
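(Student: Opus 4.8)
The plan is to exploit the representation of the restricted Hessian as a $\bb''$-weighted Gram matrix. From $\log L_{[n]}(\phi)=\sum_i\big(Y_iX_{iJ}^T\phi-\bb(X_{iJ}^T\phi)\big)$ one computes
\[
H_J(\phi)=\sum_{i=1}^n \bb''(X_{iJ}^T\phi)\,X_{iJ}X_{iJ}^T,
\]
and the goal is a lower bound valid uniformly over the ball $\{\|\phi\|_2\le r\}$. The difficulty is that, with only a fourth-moment assumption on the covariates, the argument $X_{iJ}^T\phi$ can be arbitrarily large, so the weights $\bb''(X_{iJ}^T\phi)$ admit no deterministic positive lower bound. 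First I would decouple the weights from the outer products by a hard truncation: fix a level $c_0$ and discard every observation with $\|X_{iJ}\|_2>c_0$. Since $\bb''\ge 0$ everywhere (it is a conditional variance in the exponential family), dropping terms only decreases the matrix in the Loewner order, so
\[
H_J(\phi)\succeq \sum_{i:\,\|X_{iJ}\|_2\le c_0}\bb''(X_{iJ}^T\phi)\,X_{iJ}X_{iJ}^T .
\]
For every retained observation and every $\phi$ with $\|\phi\|_2\le r$, Cauchy--Schwarz gives $|X_{iJ}^T\phi|\le c_0r$, hence $\bb''(X_{iJ}^T\phi)\ge \inf\{\bb''(\theta):|\theta|\le c_0r\}$. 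This is the crucial step: truncation bounds the argument of $\bb''$ simultaneously for \emph{all} $\phi$ in the ball, replacing the uniform-in-$\phi$ problem by a single random matrix $\widetilde G\coloneqq \sum_{i:\|X_{iJ}\|_2\le c_0}X_{iJ}X_{iJ}^T$ that does not depend on $\phi$.

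It then remains to show $\lambda_{\min}(\widetilde G)\gtrsim na_1$ with the stated probability. I would first compare the mean of $\widetilde G$ to the untruncated second-moment matrix, writing $\tfrac1n\EE[\widetilde G]=\EE[X_{1J}X_{1J}^T]-\EE\big[\one{\|X_{1J}\|_2>c_0}X_{1J}X_{1J}^T\big]$. The first term has smallest eigenvalue at least $a_1$ by hypothesis, and the truncation bias has operator norm at most $\EE[\|X_{1J}\|_2^2\one{\|X_{1J}\|_2>c_0}]\le c_0^{-2}\EE[\|X_{1J}\|_2^4]$, which the fourth-moment bound controls through $\EE[\|X_{1J}\|_2^4]\le |J|^2 m=4q^2m$. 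Taking $c_0$ of order $q\sqrt{m/a_1}$ forces this bias below $a_1/2$, so $\lambda_{\min}(\EE[\widetilde G])\ge na_1/2$. For the fluctuation, the summands $X_{iJ}X_{iJ}^T$ are i.i.d., positive semidefinite, and bounded in operator norm by $c_0^2$ after truncation, so the matrix Chernoff inequality yields $\lambda_{\min}(\widetilde G)\ge na_1/4$ off an event of probability at most $|J|\exp(-c\,na_1/c_0^2)$ for an absolute constant $c$. Substituting $c_0^2\asymp q^2m/a_1$ turns the exponent into one of order $na_1^2/(q^2m)$, i.e.\ of the form $n/N$ with $N\asymp q^2m\,a_1^{-2}$; this is the origin of the constant $N=\lceil 80q^2m\,a_1^{-2}\rceil$ and of the probability $1-e^{-n/(150N)}$, the dimensional prefactor $|J|=2q$ being absorbed once $n$ is large.

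Combining the two displays gives $H_J(\phi)\succeq n\mathbf{I}_J\cdot\tfrac{a_1}{4}\inf\{\bb''(\theta):|\theta|\le c_0r\}$ simultaneously for all $\|\phi\|_2\le r$, which is exactly the claimed bound; the range $20q^2r\sqrt{m}\,N$ appearing in the statement is a convenient upper bound on the truncation radius $c_0r$, and since $\bb''\ge 0$ and the infimum is over a larger set, replacing $c_0r$ by this larger radius only weakens the conclusion and so keeps it valid. I expect the main obstacle to be the calibration of the single parameter $c_0$: keeping the truncation bias below $a_1/2$ pushes $c_0$ upward, whereas keeping the matrix-Chernoff exponent of order $n/N$ pushes $c_0$ downward, and it is precisely the bounded-fourth-moment assumption that makes these two demands simultaneously satisfiable and pins down the explicit constants. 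A secondary point needing care is the verification that truncation genuinely removes the $\phi$-dependence—this rests only on $\bb''\ge 0$ together with $|X_{iJ}^T\phi|\le\|X_{iJ}\|_2\,r$—so that a single concentration event for $\widetilde G$ suffices and no union bound over a net of $\phi$ is required.
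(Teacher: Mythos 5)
Your proposal is correct and proves the stated bound, but it takes a genuinely different route from the paper. You make a single per-observation truncation, keeping only the observations with $\|X_{iJ}\|_2\le c_0$ for $c_0\asymp q\sqrt{m/a_1}$, control the truncation bias of the second-moment matrix through the fourth-moment bound, and then invoke the matrix Chernoff inequality for sums of i.i.d., almost surely bounded, positive semidefinite matrices; as you note, uniformity in $\phi$ is free because truncation gives $|X_{iJ}^T\phi|\le c_0 r$ for every $\phi$ in the ball. The paper avoids matrix concentration entirely: it partitions the sample into blocks of size $N=\left\lceil 80q^2ma_1^{-2}\right\rceil$, and for each block proves (by Chebyshev applied to the Frobenius norm of the deviation --- this is what fixes $N$) that the block Gram matrix is within $\tfrac{a_1}{2}$ of its mean with probability at least $0.8$, and (by Markov) that all covariate entries in the block are bounded with probability at least $0.8$; a scalar binomial Chernoff bound then shows that at least $n/(2N)$ blocks are ``good'' with probability $1-e^{-n/(150N)}$, and each good block contributes $\succeq \tfrac{a_1}{2}N b_0\mathbf{I}_J$ to the Hessian. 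So the paper's argument is elementary (Chebyshev, Markov, scalar Chernoff only) at the cost of blocking bookkeeping and weaker constants, whereas yours is shorter and gives a sharper exponent but imports Tropp's bound as a black box; both share the same mechanism for $\phi$-uniformity and both discard data (you drop individual large observations, the paper drops whole bad blocks). One point to watch: replacing your range $c_0 r$ by the statement's range $20q^2r\sqrt{m}\left\lceil 80q^2ma_1^{-2}\right\rceil$ requires $c_0\le 20q^2\sqrt{m}N$, which, using $a_1\le\sqrt{m}$ (forced by the hypotheses, since $\lambda_{\min}\le \operatorname{tr}/|J|$), holds unless $m$ is below an absolute numerical constant; the paper's own proof makes the entirely analogous simplification of needing $20q^2\sqrt{m}N\ge 1$, so this is a shared, benign bit of constant bookkeeping rather than a gap in your argument.
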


\subsection{Proof outline for Theorem~\ref{thm:Bayes=BIC}}
\label{proof:Bayes=BIC}

The key bound in the proof is showing that
\begin{equation*}
\int_{\phi_J\in\R^J}L_{[n]}(\phi_J)f_J(\phi_J)\; d\phi_J
=L_{[n]}(\phatj)f_J(\phatj)\cdot\left|H_J(\phatj)\right|^{-\nicefrac{1}{2}}(2\pi)^{\nicefrac{|J|}{2}}\cdot
\left(1\pm C\sqrt{\frac{\log(np)}{n}}\right)\;,
\end{equation*}
for all $J$ with $|J|\leq q$.  To this end, we calculate the marginal
likelihood in each model $J$ by splitting the integration domain into
three regions---a small neighborhood of the MLE called
$\mathcal{N}_1$, a larger region $\mathcal{N}_2\backslash
\mathcal{N}_1$ obtained from a larger neighborhood $\mathcal{N}_2$,
and the remainder of the space, given by
$\R^J\backslash\mathcal{N}_2$.

Fix any model $J$ with $|J|\leq q$, and let $\phatj$ be the
MLE. Define the neighborhoods
\begin{align*}
&\mathcal{N}_1\coloneqq\left\{\phi: \left\|H_J(\phatj)^{\nicefrac{1}{2}}(\phi_J-\phatj)\right\|_2\leq \sqrt{4\log(np)}\right\}\;,\\
&\mathcal{N}_2\coloneqq\left\{\phi: \left\|H_J(\phatj)^{\nicefrac{1}{2}}(\phi_J-\phatj)\right\|_2\leq \sqrt{\l_1n}\right\}\;.
\end{align*}
Then the marginal likelihood is the sum of the three integrals
\begin{align*}
\text{(Int1)} &\coloneqq \int_{\phi_J\in\mathcal{N}_1}L_{[n]}(\phi_J)f_J(\phi_J)\;
d\phi_J\,,\\
\text{(Int2)} &\coloneqq \int_{\phi_J\in\mathcal{N}_2\backslash\mathcal{N}_1}L_{[n]}(\phi_J)f_J(\phi_J)\; d\phi_J\,,\\
\text{(Int3)} &\coloneqq
\int_{\phi_J\in\R^J\backslash\mathcal{N}_2}L_{[n]}(\phi_J)f_J(\phi_J)\; 
  d\phi_J\,.
\end{align*}
The bulk of the proof now  consists of computing approximations to each of
the three terms separately.  

It is at first surprising that we split into three regions, rather
than two, as is done in other work with `fixed $p$.'  The intuition
for our split is as follows:
\begin{description}
\item[\rm\em (Int1)] In the smallest region, $\mathcal{N}_1$, a
  quadratic approximation to the log-likelihood function is extremely
  accurate, and we can use it to prove the accuracy of the Laplace
  approximation for this part of the integral.
\item[\rm\em (Int2)] In the intermediate region
  $\mathcal{N}_2\backslash\mathcal{N}_1$, while the quadratic
  approximation to the log-likelihood function is no longer very
  accurate, we can still obtain a quadratic upper bound.  Hence,
  the  integrand behaves as $e^{-c\|\phi_J-\phatj\|^2_2}$
  for an appropriate constant $c$, meaning that we can use tail
  bounds for the $\chi^2$ distribution to prove that the contribution
  of this region is negligible.
\item[\rm\em (Int3)] Outside of $\mathcal{N}_2$, the quadratic
  approximation may no longer be accurate enough to use the same
  reasoning as for the intermediate region. However, due to convexity
  of log-likelihood function, the integrand is at most roughly
  $e^{-c'\|\phi_J-\phatj\|_2}$ for an appropriate constant $c'$.
  (Note that the quantity in the exponent is no longer squared.)
  Therefore, we can use tail bounds for an exponential distribution, to
  show that the contribution from this third region is also
  negligible. 
\end{description}
 
Exponential tail bounds are much weaker than those for the $\chi^2$
distribution, which explains why we separate the area outside of
$\mathcal{N}_1$ into two regions---first, the intermediate region
$\mathcal{N}_2\backslash\mathcal{N}_1$ which contains points that are
relatively close to the MLE, for which we can apply strong tail
bounds, and second, a region $\R^J\backslash\mathcal{N}_2$ where we can
only apply weaker tail bounds, but where all  points are rather
far from the MLE.

\subsection{Proof outline for
  Theorem~\ref{thm:BIC_consistent}}\label{proof:BIC_consistent} 

Lemma~\ref{lem:WHP} deals with issues arising from random covariates.
Given the results of Lemma~\ref{lem:WHP}, our proof of this theorem
follows the same reasoning as that of \citet{Chen:2011}.  Only slight
modifications are needed; we give the details in the appendix for
completeness.  The proof consists of two parts that separate the
treatment of incorrect and of true models:

\begin{itemize}
\item[(a)] An incorrect sparse model is a model $J$ with $J\not\supset
  J^*$.  In such a model, the distance between $\phatj$ and $\phi^*$
  will be large enough such that the likelihood function of model $J$
  achieves only low values.  The model will thus not be chosen over
  the true model $J^*$. Specifically, the lower-bound on the signal in
  assumption (B5) ensures that the change in the EBIC when comparing
  model $J$ to model $J^*$, is at least on the order of $\log(np)$.
\item[(b)] A true model is a model $J$ with $J\supsetneq J^*$.  In an
  overly-large true model, the achievable increase in likelihood due
  to the extra degrees of freedom will not be large enough to
  compensate for the increased model size, and so again $J$ will not
  be chosen over the smallest true model $J^*$.  Specifically, the
  increase in the achievable log-likelihood will be bounded on the
  order of $|J\backslash J^*| \log(np)$, which will be outweighed by
  the additional penalty on the larger model $J$.
\end{itemize}

\subsection{Proof outline for
  Theorem~\ref{thm:Ising}}\label{proof:Ising}

Considering each of the $p$ regressions separately, we obtain
consistency of the EBIC with probability at least $1-n^{-\a}p^{-\b}$
via Theorem~\ref{thm:BIC_consistent}, as long as all the
conditions~(B1)-(B5) hold.  Using our assumptions for this current
theorem, all these conditions hold by assumption, except for the
eigenvalue bounds on $\EE\left[X_{1J}^{}X_{1J}^T\right]$ for all
$|J|\leq 2q$.  We derive these bounds in the appendix, using
properties of the logistic model combined with the conditions assumed
to be true.

\section{Conclusion}\label{sec:conclusion}

As discussed in detail in the introduction in
Section~\ref{sec:introduction}, the results in this paper make a
formal connection between Bayesian model determination and model
search using recently-proposed extended Bayesian information criteria
(EBIC).  Our results pertain to sparse high-dimensional generalized
linear models based on a one-dimensional univariate exponential family
and with canonical link.  Evidently, a number of generalizations would
be of interest for future work.

Remaining in the univarate exponential family framework, regression
under non-canonical link could be considered in a fashion similar to
what we have done here.  Very recently, a treatment of this problem in
the vein of \cite{Chen:2011} has been undertaken by
\cite{Luo:2011:new}.  Another extension would be to allow for
exponential families with more than one parameter in regression
models.  This would in particular recover results for linear
regression with unknown variance as a special case.

A different paradigm would be the graphical model setting.  As
reviewed in Section~\ref{sec:sparse_graphs}, there is a version of the
EBIC that enjoys consistency properties in the Gaussian case.
However, it remains an open problem to establish a formal connection
to fully Bayesian graph selection procedures.  Moreover, we hope that
the available consistency results can be strengthened to avoid, in
particular, decomposability assumptions for the concerned graphs.

\section*{Acknowledgments}

Mathias Drton was supported by the NSF under Grant No.~DMS-0746265 and
by an Alfred P. Sloan Fellowship.

\bibliographystyle{chicago} 
\bibliography{BICvsBayes}

\clearpage
\appendix

\section{Proof of Theorem~\ref{thm:Bayes=BIC}}\label{appendix:Bayes=BIC}
 \begin{reptheorem}{thm:Bayes=BIC}
 Assume that conditions~(B1)-(B5) hold, and that either assumption
  (A1) or (A2) holds.  Moreover, assume the following mild conditions
  on the family of priors $\left(f_J:J\subset[p],|J|\leq q\right)$,
  which require the existence of constants $0<F_1,F_2,F_3<\infty$
  such that, uniformly for all $|J|\leq q$, we have
  \begin{itemize}
  \item[(i)] an upper bound on the priors: 
    \[
    \sup{}_{\phi_J} f_J(\phi_J)\leq F_1<\infty,
    \]
  \item[(ii)] a lower bound on the priors over a compact set:
    \[
    \inf{}_{\|\phi_J\|_2\leq R+1} f_J(\phi_J)\geq F_2>0,
    \]
    where $R$ is a function of the constants in assumptions (A1) or (A2) and (B1)-(B5), defined in the proofs,
  \item[(iii)] a Lipschitz property on the same compact set:
    \[
    \sup{}_{\|\phi_J\|_2\leq R+1} \left\|\nabla
      f_J(\phi_J)\right\|_2\leq F_3<\infty.
    \]
  \end{itemize}
  Then there is a constant $C$, no larger than
  $4F_3F_2^{-1}\l_1^{-\nicefrac{1}{2}}+2q\l_3\l_1^{-\nicefrac{3}{2}}+2$, 
  such that, for sufficiently large $n$, the event that 
  \begin{equation}\label{eq:Bayes=Laplace}
    \mathrm{Bayes}(J) 
    =P(J)\cdot L_{[n]}(\phatj)f_J(\phatj)\cdot
    \left|H_J(\phatj)\right|^{-\nicefrac{1}{2}}(2\pi)^{\nicefrac{|J|}{2}}\cdot
    \left(1\pm C\sqrt{\frac{\log(np)}{n}}\right)
  \end{equation}
  uniformly for all models $J$ with $|J|\leq q$ occurs with
  probability at least $1-(np)^{-1}$ under (A1), and with probability at
  least $1-(np)^{-1}-4K^{K+1}n^{-\frac{K-2\kappa}{2}}$ under (A2).
  In particular, for the (unnormalized) prior $P(J)= {p\choose
    |J|}^{-\gamma}\cdot\one{|J|\leq q}$, it holds that 
  \begin{equation}\label{eq:Bayes=BIC}
    \left|\log\left(\mathrm{Bayes}_{\g}(J)\right)-
      \left(-\tfrac{1}{2}\mathrm{BIC}_{\g}(J)\right)\right|\leq
    C_1\;,
  \end{equation}
  where $C_1$ is a constant no larger than
  $\tfrac{q}{2}\log(2\pi)+\g
    q\log(2q)+q\log\max\{\l_1^{-1},\l_2\}+\log\max\{F_1,F_2^{-1}\}+1$.
 \end{reptheorem}

\begin{proof}
First, we show that the approximation~(\ref{eq:Bayes=Laplace}) to the Bayesian marginal likelihood will imply the bound~(\ref{eq:Bayes=BIC}). We have
\begin{align*}
&\log\left(\mathrm{Bayes}_{\g}(J)\right)=\log\left(P(J)\cdot L_{[n]}(\phatj)f_J(\phatj)\cdot\left|H_J(\phatj)\right|^{-\nicefrac{1}{2}}(2\pi)^{\nicefrac{|J|}{2}}\cdot \left(1\pm C\sqrt{\frac{\log(np)}{n}}\right)\right)\\
&=-\g\log{p\choose |J|}+\log L_{[n]}(\phatj)+\log f_J(\phatj)-\tfrac{1}{2}\log \left|H_J(\phatj)\right|+\tfrac{|J|}{2}\log (2\pi)+\log  \left(1\pm C\sqrt{\frac{\log(np)}{n}}\right)\;.
\end{align*}
We now approximate some of the above terms. First, by definition of ${p\choose |J|}$, we have
$$|J|\log(p)\geq\log{p\choose |J|}\geq\log\left(\frac{(p-|J|)^{|J|}}{|J|^{|J|}}\right)\geq \log\left(\left(\frac{p}{2|J|}\right)^{|J|}\right)\geq |J|\log(p)-q\log(2q)\;.$$
Next, since $\lambda_1 n\mathbf{I}_J\preceq H_J(\phatj)\preceq \lambda_2n\mathbf{I}_J$, we have
$$|J|\log(n)+|J|\log(\l_1)=\log\left|\lambda_1n\mathbf{I}_J\right|\leq \log \left|H_J(\phatj)\right|\leq \log\left|\lambda_2n\mathbf{I}_J\right|= |J|\log(n)+|J|\log(\l_2)\;.$$
Finally, $F_2\leq \log f_J(\phatj)\leq F_1$ by assumption, and for sufficiently large $n$, $C\sqrt{\frac{\log(np)}{n}}\leq \frac{1}{2}$. Combining all of the above, we get
$$\log\left(\mathrm{Bayes}_{\g}(J)\right)= \log L_{[n]}(\phatj)-\tfrac{|J|}{2}\log(n)-\g|J|\log(p)\pm C_1=-\tfrac{1}{2}\mathrm{BIC}_{\g}(J)\pm C_1\;,$$
where we define
$$C_1\coloneqq \tfrac{q}{2}\log(2\pi)+\g q\log(2q)+q\log\max\{\l_1^{-1},\l_2\}+\log\max\{F_1,F_2^{-1}\}+1\;.$$

We next prove the approximation~(\ref{eq:Bayes=Laplace}). We need to show that
\begin{equation*}\int_{\phi_J\in\R^J}L_{[n]}(\phi_J)f_J(\phi_J)\; d\phi_J
=L_{[n]}(\phatj)f_J(\phatj)\cdot\left|H_J(\phatj)\right|^{-\nicefrac{1}{2}}(2\pi)^{\nicefrac{|J|}{2}}\cdot \left(1\pm C\sqrt{\frac{\log(np)}{n}}\right)\;,\end{equation*}
for all $J$ with $|J|\leq q$.

To this end, for each model $J$, we split the integration domain into three regions---a small neighborhood of the MLE denoted by $\mathcal{N}_1$, a larger region $\mathcal{N}_2\backslash \mathcal{N}_1$ obtained by taking a larger neighborhood $\mathcal{N}_2$ and subtracting the first region, and the remainder of the space, given by $\R^J\backslash\mathcal{N}_2$.

Fix any model $J$ with $|J|\leq q$, and let $\phatj$ be the MLE. Define the neighborhoods
\begin{align*}
&\mathcal{N}_1\coloneqq\left\{\phi: \left\|H_J(\phatj)^{\nicefrac{1}{2}}(\phi_J-\phatj)\right\|_2\leq \sqrt{4\log(np)}\right\}\;,\\
&\mathcal{N}_2\coloneqq\left\{\phi: \left\|H_J(\phatj)^{\nicefrac{1}{2}}(\phi_J-\phatj)\right\|_2\leq \sqrt{\l_1n}\right\}\;.
\end{align*}
(We assume $n$ is large so that $\mathcal{N}_1\subset \mathcal{N}_2$.) We write
\begin{align*}
&\int_{\phi_J\in\R^J}L_{[n]}(\phi_J)f_J(\phi_J)\; d\phi_J=\\
&\hspace{1cm}\underbrace{\int_{\phi_J\in\mathcal{N}_1}L_{[n]}(\phi_J)f_J(\phi_J)\; d\phi_J}_{\text{(Int1)}}
+\underbrace{\int_{\phi_J\in\mathcal{N}_2\backslash\mathcal{N}_1}L_{[n]}(\phi_J)f_J(\phi_J)\; d\phi_J}_{\text{(Int2)}}
+\underbrace{\int_{\phi_J\in\R^J\backslash\mathcal{N}_2}L_{[n]}(\phi_J)f_J(\phi_J)\; d\phi_J}_{\text{(Int3)}}\;.\end{align*}
We now approximate to each of the three terms separately. First, by Lemma~\ref{lem:WHP}(iv), for a point $\phi_J\in \mathcal{N}_2$, $\|H_J(\phatj)^{\nicefrac{1}{2}}(\phi_J-\phatj)\|_2\leq \sqrt{\l_1n}$ implies $\|\phi_J-\phatj\|_2\leq 1$. Therefore, integrals (Int1) and (Int2) are both computed in a neighborhood of radius $1$ around $\phatj$. The main idea for the computations below is that the contributions of (Int2) and (Int3) are negligible, while the value of (Int1) can be very closely approximated by using the second-order Taylor series expansion to the likelihood.

{\bf Approximating (Int1).}
In a very small neighborhood around $\phatj$, the quadratic approximation
\begin{align*}\sum_i \ell_i(\phi_J)&\approx \sum_i \ell_i(\phatj)+(\phi_J-\phatj)^Ts_J(\phatj)-\frac{1}{2}(\phi_J-\phatj)^TH_J(\phatj)(\phi_J-\phatj)\\
&= \sum_i \ell_i(\phatj)-\frac{1}{2}(\phi_J-\phatj)^TH_J(\phatj)(\phi_J-\phatj)\end{align*}
is very accurate, and we can therefore use a Laplace approximation to the integral in this small neighborhood, to obtain
\begin{align*}\text{(Int1)}&\approx \int_{\R^J}\exp\left\{\sum_i \ell_i(\phatj) -\frac{1}{2}(\phi_J-\phatj)^TH_J(\phatj)(\phi_J-\phatj)\right\}f_J(\phatj)\;d\phi_J \\
&={(2\pi)^{\nicefrac{|J|}{2}}f_J(\phatj)}{ \left|H_J(\phatj)\right|^{-\nicefrac{1}{2}}}\exp\left\{\sum_i \ell_i(\phatj)\right\}\;.\end{align*}

To make this approximation rigorous, we begin by giving precise bounds on the approximation to the likelihood in a neighborhood of $\phatj$. By Lemma~\ref{lem:WHP}(iv), for any $\phi_J$ with $\|\phi_J-\phatj\|_2\leq 1$, for some $t\in [0,1]$,  we have
\begin{align}
\notag\sum_i \left(\ell_i(\phi_J)-\ell_i(\phatj)\right)
&=\psi_J^Ts_J(\phatj)-\frac{1}{2}(\phi_J-\phatj)^TH_J(\phatj+t(\phi_J-\phatj))(\phi_J-\phatj)\\
\notag&=-\frac{1}{2}(\phi_J-\phatj)^TH_J(\phatj+t(\phi_J-\phatj))(\phi_J-\phatj)\\
\notag&= -\frac{1}{2}(\phi_J-\phatj)^TH_J(\phatj)(\phi_J-\phatj)\pm \frac{1}{2}\|\phi_J-\phatj\|^2_2\left\|H_J(\phatj+t(\phi_J-\phatj))-H_J(\phatj)\right\|_{\mathrm{sp}}\\
\label{eq:approx_logL}&=  -\frac{1}{2}(\phi_J-\phatj)^TH_J(\phatj)(\phi_J-\phatj)\pm\frac{1}{2}\|\phi_J-\phatj\|^3_2 n\lambda_3\;.
\end{align}
(Here $\|M\|_{\mathrm{sp}}$ denotes the spectral norm of the matrix $M$.)
Recall that for all $\phi_J\in\mathcal{N}_1\subset \mathcal{N}_2$, we have $\|\phi_J-\phatj\|_2\leq 1$. Applying the approximation~(\ref{eq:approx_logL})  for all $\phi_J\in\mathcal{N}_1$, we claim that
$$\text{(Int1)}= (2\pi)^{\nicefrac{|J|}{2}}f_J(\phatj) \left|H_J(\phatj)\right|^{-\nicefrac{1}{2}}\exp\left\{\sum_i \ell_i(\phatj)\right\}\cdot\left(1\pm \left(4F_3F_2^{-1}\l_1^{-\nicefrac{1}{2}}+2q\l_3\l_1^{-\nicefrac{3}{2}}+1\right)\cdot\sqrt{\frac{\log(np)}{n}}\right)\;.$$
We will now prove this bound.

Applying Lemma~\ref{lem:WHP}(iii),  $\|\phi_J\|_2\leq \|\phatj\|_2+\|\phi_J-\phatj\|_2\leq R+1$. By our assumptions on $f_J$ on the ball of radius $R+1$ at zero,
$$\left\|\frac{\partial}{\partial \phi_J}\log f_J(\phi_J)\right\|_2=\left\|\frac{\nabla f_J(\phi_J)}{f_J(\phi_J)}\right\|_2\leq F_3F_2^{-1}\;.$$
Next, since by Lemma~\ref{lem:WHP}(iv) we know that $H_J(\phatj)\succeq \l_1n\mathbf{I}_J$, we apply the definition of $\mathcal{N}_1$ to obtain
$$ \|\phi_J-\phatj\|^3_2n\l_3 \leq \sqrt{\frac{4\log(np)}{\l_1n}}\cdot \|\phi_J-\phatj\|^2_2\leq \sqrt{\frac{4\log(np)}{\l_1n}}\cdot (\phi_J-\phatj)^T H_J(\phatj)(\phi_J-\phatj)\cdot (\l_1n)^{-1}\;.$$

Applying the three above bounds, we obtain an upper bound on (Int1):
\begin{align*}
&\text{(Int1)}=\int_{\mathcal{N}_1}\exp\left\{\sum_i \ell_i(\phi_J)\right\}f_J(\phi_J)\; d\phi_J\\
&=\exp\left\{\sum_i \ell_i(\phatj)\right\}\int_{\mathcal{N}_1}\exp\left\{-\frac{1}{2}(\phi_J-\phatj)^T H_J(\phatj)(\phi_J-\phatj)\pm \frac{1}{2}\|\phi_J-\phatj\|^3_2n\l_3\right\}f_J(\phi_J)\; d\phi_J\\
&\leq \exp\left\{\sum_i \ell_i(\phatj)\right\}\int_{\mathcal{N}_1}\exp\left\{-\frac{1}{2}(\phi_J-\phatj)^T H_J(\phatj)(\phi_J-\phatj)\left(1-\sqrt{\frac{4\l_3^2\log(np)}{\l_1^3n}}\right)\right\}f_J(\phi_J)\; d\phi_J\\
&\leq \exp\left\{\sum_i \ell_i(\phatj)+\log f_J(\phatj) + \sqrt{\frac{4\log(np)}{\l_1n}}F_3F_2^{-1}\right\}\\
&\hspace{2cm}\times \int_{\mathcal{N}_1}\exp\left\{-\frac{1}{2}(\phi_J-\phatj)^T H_J(\phatj)(\phi_J-\phatj)\left(1-\sqrt{\frac{4\l_3^2\log(np)}{\l_1^3n}}\right)\right\}\; d\phi_J\;.
\intertext{Changing variables to $\xi=\left(1-\sqrt{\frac{4\l_3^2\log(np)}{\l_1^3n}}\right)^{-\nicefrac{1}{2}}H_J(\phatj)^{\nicefrac{1}{2}}(\phi_J-\phatj)$, the upper bound becomes}
&\exp\left\{\sum_i \ell_i(\phatj)+\log f_J(\phatj) + \sqrt{\frac{4\log(np)}{\l_1n}}F_3F_2^{-1}\right\}\cdot\left|H_J(\phatj)\right|^{-\nicefrac{1}{2}}\left(1-\sqrt{\frac{4\l_3^2\log(np)}{\l_1^3n}}\right)^{-\nicefrac{|J|}{2}}\\
&\hspace{2cm} \times\int_{\left\|\xi\right\|_2\leq \left(1-\sqrt{\frac{4\l_3^2\log(np)}{\l_1^3n}}\right)^{-\nicefrac{1}{2}}\sqrt{4\log(np)}}\;\exp\left\{-\frac{1}{2}\|\xi\|^2_2\right\}\; d\xi\\
&\leq \exp\left\{\sum_i \ell_i(\phatj)+\log f_J(\phatj)+ \sqrt{\frac{4\log(np)}{\l_1n}}F_3F_2^{-1}\right\}\cdot\left|H_J(\phatj)\right|^{-\nicefrac{1}{2}}\left(1-\sqrt{\frac{4\l_3^2\log(np)}{\l_1^3n}}\right)^{-\nicefrac{|J|}{2}}(2\pi)^{\nicefrac{|J|}{2}} \\
&\hspace{2cm}\times\int_{\xi\in \R^J}(2\pi)^{-\nicefrac{|J|}{2}}\exp\left\{-\frac{1}{2}\|\xi\|^2_2\right\}\; d\xi\\
&= \exp\left\{\sum_i \ell_i(\phatj)+\log f_J(\phatj)+ \sqrt{\frac{4\log(np)}{\l_1n}}F_3F_2^{-1}\right\}\cdot\left|H_J(\phatj)\right|^{-\nicefrac{1}{2}}\left(1-\sqrt{\frac{4\l_3^2\log(np)}{\l_1^3n}}\right)^{-\nicefrac{|J|}{2}}(2\pi)^{\nicefrac{|J|}{2}} \;.\\
\end{align*}

We similarly obtain a lower bound:
\begin{align*}
&\text{(Int1)}=\int_{\mathcal{N}_1}\exp\left\{\sum_i \ell_i(\phi_J)\right\}f_J(\phi_J)\; d\phi_J\\
&=\exp\left\{\sum_i \ell_i(\phatj)\right\}\int_{\mathcal{N}_1}\exp\left\{-\frac{1}{2}(\phi_J-\phatj)^T H_J(\phatj)(\phi_J-\phatj)\pm\frac{1}{2} \|\phi_J-\phatj\|^3_2n\l_3\right\}f_J(\phi_J)\; d\phi_J\\
&\geq \exp\left\{\sum_i \ell_i(\phatj)\right\}\int_{\mathcal{N}_1}\exp\left\{-\frac{1}{2}(\phi_J-\phatj)^T H_J(\phatj)(\phi_J-\phatj)\left(1+\sqrt{\frac{4\l_3^2\log(np)}{\l_1^3n}}\right)\right\}f_J(\phi_J)\; d\phi_J\\
&\geq \exp\left\{\sum_i \ell_i(\phatj)+\log f_J(\phatj) - \sqrt{\frac{4\log(np)}{\l_1n}}F_3F_2^{-1}\right\}\\
&\hspace{2cm}\times \int_{\mathcal{N}_1}\exp\left\{-\frac{1}{2}(\phi_J-\phatj)^T H_J(\phatj)(\phi_J-\phatj)\left(1+\sqrt{\frac{4\l_3^2\log(np)}{\l_1^3n}}\right)\right\}\; d\phi_J\;.
\intertext{Changing variables to $\xi=\left(1+\sqrt{\frac{4\l_3^2\log(np)}{\l_1^3n}}\right)^{-\nicefrac{1}{2}}H_J(\phatj)^{\nicefrac{1}{2}}(\phi_J-\phatj)$, the lower bound becomes}
&\exp\left\{\sum_i \ell_i(\phatj)+\log f_J(\phatj) - \sqrt{\frac{4\log(np)}{\l_1n}}F_3F_2^{-1}\right\}\cdot\left|H_J(\phatj)\right|^{-\nicefrac{1}{2}}\left(1+\sqrt{\frac{4\l_3^2\log(np)}{\l_1^3n}}\right)^{-\nicefrac{|J|}{2}}(2\pi)^{\nicefrac{|J|}{2}}\\
&\hspace{2cm} \times\int_{\left\|\xi\right\|_2\leq \left(1+\sqrt{\frac{4\l_3^2\log(np)}{\l_1^3n}}\right)^{-\nicefrac{1}{2}}\sqrt{4\log(np)}}(2\pi)^{-\nicefrac{|J|}{2}}\exp\left\{-\frac{1}{2}\|\xi\|^2_2\right\}\; d\phi_J\\
&\geq \exp\left\{\sum_i \ell_i(\phatj)+\log f_J(\phatj)- \sqrt{\frac{4\log(np)}{\l_1n}}F_3F_2^{-1}\right\}\cdot\left|H_J(\phatj)\right|^{-\nicefrac{1}{2}}\left(1+\sqrt{\frac{4\l_3^2\log(np)}{\l_1^3n}}\right)^{-\nicefrac{|J|}{2}}(2\pi)^{\nicefrac{|J|}{2}} \\
&\hspace{2cm}\times \PP\left\{\chi^2_{|J|}\leq 2\log(np)\right\}\\
&\geq \exp\left\{\sum_i \ell_i(\phatj)+\log f_J(\phatj)- \sqrt{\frac{4\log(np)}{\l_1n}}F_3F_2^{-1}\right\}\cdot\left|H_J(\phatj)\right|^{-\nicefrac{1}{2}}\left(1+\sqrt{\frac{4\l_3^2\log(np)}{\l_1^3n}}\right)^{-\nicefrac{|J|}{2}}(2\pi)^{\nicefrac{|J|}{2}}\\
&\hspace{2cm}\times \left(1-e^{-\log(np)/2}\right)\;,\\
\end{align*}
for sufficiently large $n$.

Combining the upper and lower bounds, we therefore have 
\begin{align*}
& \text{(Int1)}= 
 \exp\left\{\sum_i \ell_i(\phatj)+\log f_J(\phatj)+ \sqrt{\frac{4\log(np)}{\l_1n}}\frac{F_3}{F_2}\right\}\\
 &\hspace{1.5in}\cdot\left|H_J(\phatj)\right|^{-\nicefrac{1}{2}}\left(1-\sqrt{\frac{4\l_3^2\log(np)}{\l_1^3n}}\right)^{-\nicefrac{|J|}{2}}(2\pi)^{\nicefrac{|J|}{2}} \cdot (1-c) \;,\\
\end{align*}
for some $c$ satisfying $0\leq c\leq e^{-\log(np)/2}$. Since $\log(np)=\mathbf{o}(n)$, we can thus write
$$\text{(Int1)}= (2\pi)^{\nicefrac{|J|}{2}}f_J(\phatj) \left|H_J(\phatj)\right|^{-\nicefrac{1}{2}}\exp\left\{\sum_i \ell_i(\phatj)\right\}\cdot\left(1\pm \left(4F_3F_2^{-1}\l_1^{-\nicefrac{1}{2}}+2q\l_3\l_1^{-\nicefrac{3}{2}}+1\right)\cdot\sqrt{\frac{\log(np)}{n}}\right)\;.$$

{\bf Bounding (Int2).}
For $\|\phi_J-\phatj\|_2\leq 1$, we can apply Lemma~\ref{lem:WHP}(iii) and (iv) to see that $H_J(\phi_J)\succeq \l_1 n\mathbf{I}_J\succeq \l_1\l_2^{-1} H_J(\phatj)$. Therefore, by the Taylor series approximation, for $\|\phi_J-\phatj\|_2\leq 1$, since $s_J(\phatj)=0$, we have
\begin{equation}\label{eq:Hessian_Int2_Int3}\sum_i \ell_i(\phi_J)\leq \sum_i \ell_i(\phatj)-\frac{\l_1\l_2^{-1}}{2}(\phi_J-\phatj)^TH_J(\phatj)(\phi_J-\phatj)\;,\end{equation}
and so
\begin{align*}
\text{(Int2)}&=\int_{\sqrt{4\log(np)}<\left\|H_J(\phatj)^{\nicefrac{1}{2}}(\phi_J-\phatj)\right\|_2\leq \sqrt{\l_1n}}\exp\left\{\sum_i \ell_i(\phi_J)\right\}f_J(\phi_J)\; d\phi_J\\
&\leq F_1\int_{\sqrt{4\log(np)}<\left\|H_J(\phatj)^{\nicefrac{1}{2}}(\phi_J-\phatj)\right\|_2}\exp\left\{\sum_i \ell_i(\phatj)-\frac{\l_1\l_2^{-1}}{2}(\phi_J-\phatj)^TH_J(\phatj)(\phi_J-\phatj)\right\}\; d\phi_J\\
&= F_1\exp\left\{\sum_i \ell_i(\phatj)\right\}\left|H_J(\phatj)\right|^{-\nicefrac{1}{2}}(\l_2\l_1^{-1})^{\nicefrac{|J|}{2}}\int_{\|\xi\|^2_2>{{2\log(np)}}}\exp\left\{-\frac{1}{2}\|\xi\|^2_2\right\}\; d\xi\\
&= F_1\exp\left\{\sum_i \ell_i(\phatj)\right\}\left|H_J(\phatj)\right|^{-\nicefrac{1}{2}}(\l_2\l_1^{-1})^{\nicefrac{|J|}{2}}\cdot (2\pi)^{\nicefrac{|J|}{2}}\PP\left\{\chi^2_{|J|}>{{2\log(np)}}\right\}\\
&\leq F_1\exp\left\{\sum_i \ell_i(\phatj)\right\}\left|H_J(\phatj)\right|^{-\nicefrac{1}{2}}(\l_2\l_1^{-1})^{\nicefrac{|J|}{2}}\cdot (2\pi)^{\nicefrac{|J|}{2}}\cdot e^{-\log(np)/2}\;,
\end{align*}
by the chi-square tail bounds derived by \citet{Cai:2002}.

{\bf Bounding (Int3).}
For all $\phi_J$ such that $\left\|H_J(\phatj)^{\nicefrac{1}{2}}(\phi_J-\phatj)\right\|_2=\sqrt{\l_1n}$, by~(\ref{eq:Hessian_Int2_Int3}), we know that
\begin{align*}\sum_i \ell_i(\phi_J)-\sum_i \ell_i(\phatj)&\leq -\frac{\l_1\l_2^{-1}}{2}(\phi_J-\phatj)^TH_J(\phatj)(\phi_J-\phatj) \\&=-\frac{\l_1\l_2^{-1}\cdot \sqrt{\l_1n}}{2}\left\|H_J(\phatj)^{\nicefrac{1}{2}}(\phi_J-\phatj)\right\|_2\;,\end{align*}
and so by convexity of likelihood, for all $\phi_J$ such that $\left\|H_J(\phatj)^{\nicefrac{1}{2}}(\phi_J-\phatj)\right\|_2>  \sqrt{\l_1n}$,
$$\sum_i \ell_i(\phi_J)-\sum_i \ell_i(\phatj)\leq -\frac{\l_1\l_2^{-1}\cdot \sqrt{\l_1n}}{2}\left\|H_J(\phatj)^{\nicefrac{1}{2}}(\phi_J-\phatj)\right\|_2\;.$$

Therefore,
\begin{align*}
\text{(Int3)}&=\int_{\left\|H_J(\phatj)^{\nicefrac{1}{2}}(\phi_J-\phatj)\right\|_2>  \sqrt{\l_1n}}\exp\left\{\sum_i \ell_i(\phi_J)\right\}f_J(\phi_J)\; d\phi_J\\
&\leq F_1\exp\left\{\sum_i\ell_i(\phatj)\right\}\int_{\left\|H_J(\phatj)^{\nicefrac{1}{2}}(\phi_J-\phatj)\right\|_2>  \sqrt{\l_1n}}\exp\left\{-\frac{\l_1\l_2^{-1}\cdot \sqrt{\l_1n}}{2}\left\|H_J(\phatj)^{\nicefrac{1}{2}}(\phi_J-\phatj)\right\|_2\right\}\; d\phi_J\;.
\intertext{Changing variables to $\xi=H_J(\phatj)^{\nicefrac{1}{2}}(\phi_J-\phatj)$, the integral is equal to}
&= F_1\exp\left\{\sum_i\ell_i(\phatj)\right\}\left|H_J(\phatj)\right|^{-\nicefrac{1}{2}}\int_{\|\xi\|_2>\sqrt{\l_1n}}\exp\left\{-\frac{\l_1\l_2^{-1}\cdot \sqrt{\l_1n}}{2}\left\|\xi\right\|_2\right\}\; d\xi\\
&\leq F_1\exp\left\{\sum_i\ell_i(\phatj)\right\}\left|H_J(\phatj)\right|^{-\nicefrac{1}{2}}\cdot  \exp\left\{-n\cdot{\l_1^2}(4q\l_2)^{-1}\right\}\;,
\end{align*}
where the last inequality is proved as follows:

\begin{align*}
&\int_{\|\xi\|_2>\sqrt{\l_1n}}\exp\left\{-\frac{\l_1\l_2^{-1}\cdot \sqrt{\l_1n}}{2}\left\|\xi\right\|_2\right\}\; d\xi\\
&=2^{|J|}\int_{\xi\in\R_+^J, \|\xi\|_2>\sqrt{\l_1n}}\exp\left\{-\frac{\l_1\l_2^{-1}\cdot \sqrt{\l_1n}}{2}\cdot |J|^{-\nicefrac{1}{2}}(\xi_1+\dots+\xi_{|J|})\right\}\; d\xi\\
&\leq 2^{|J|}\int_{\xi\in\R_+^J, \|\xi\|_{\infty}>\sqrt{\l_1n|J|^{-1}}}\exp\left\{-\frac{\l_1^{1.5}\sqrt{n}}{2\l_2\sqrt{|J|}}(\xi_1+\dots+\xi_{|J|})\right\}\; d\xi\\
&=2^{|J|}\cdot \left(\frac{\l_1^{1.5}\sqrt{n}}{2\l_2\sqrt{|J|}}\right)^{-|J|}\cdot \PP\left\{\max\{Z_1,\dots,Z_{|J|}\}>\sqrt{\l_1n|J|^{-1}}:Z_1,\dots,Z_{|J|}\stackrel{\text{iid}}{\sim}\mathrm{Exp}\left(\frac{\l_1^{1.5}\sqrt{n}}{2\l_2\sqrt{|J|}}\right)\right\}\\
&\leq 2^{|J|}\cdot |J|\cdot \left(\frac{\l_1^{1.5}\sqrt{n}}{2\l_2\sqrt{|J|}}\right)^{-|J|}\cdot  \PP\left\{\mathrm{Exp}\left(\frac{\l_1^{1.5}\sqrt{n}}{2\l_2\sqrt{|J|}}\right)>\sqrt{\l_1n|J|^{-1}}\right\}\\
&\leq \PP\left\{\mathrm{Exp}\left(\frac{\l_1^{1.5}\sqrt{n}}{2\l_2\sqrt{|J|}}\right)>\sqrt{\l_1n|J|^{-1}}\right\}
= \exp\left\{-\sqrt{\l_1n|J|^{-1}}\cdot \frac{\l_1^{1.5}\sqrt{n}}{2\l_2\sqrt{|J|}}\right\}\;.
\end{align*}

{\bf Combining the bounds.}
Applying our approximation of (Int1) and bounds on (Int2) and (Int3), we have

\begin{align*}
&\int_{\phi_J\in\R^J}\exp\left\{\sum_i \ell_i(\phi_J)\right\}f_J(\phi_J)\; d\phi_J=\text{(Int1)}+\text{(Int2)}+\text{(Int3)}\\
&=(2\pi)^{\nicefrac{|J|}{2}}f_J(\phatj) \left|H_J(\phatj)\right|^{-\nicefrac{1}{2}}\exp\left\{\sum_i \ell_i(\phatj)\right\}\cdot\left(1\pm \left(4F_3F_2^{-1}\l_1^{-\nicefrac{1}{2}}+2q\l_3\l_1^{-\nicefrac{3}{2}}+1\right)\cdot\sqrt{\frac{\log(np)}{n}}\right)\\
&\hspace{1cm} \pm F_1\exp\left\{\sum_i \ell_i(\phatj)\right\}\left|H_J(\phatj)\right|^{-\nicefrac{1}{2}}(\l_2\l_1^{-1})^{\nicefrac{|J|}{2}}\cdot (2\pi)^{\nicefrac{|J|}{2}}\cdot e^{-\log(np)/2} \\
&\hspace{1cm} \pm F_1\exp\left\{\sum_i\ell_i(\phatj)\right\}\left|H_J(\phatj)\right|^{-\nicefrac{1}{2}}\cdot  \exp\left\{-n\cdot{\l_1^2}(4q\l_2)^{-1}\right\} \\
&=(2\pi)^{\nicefrac{|J|}{2}}f_J(\phatj) \left|H_J(\phatj)\right|^{-\nicefrac{1}{2}}\exp\left\{\sum_i \ell_i(\phatj)\right\}\\
&\hspace{1cm}\cdot\left(1\pm \left(4F_3F_2^{-1}\l_1^{-\nicefrac{1}{2}}+2q\l_3\l_1^{-\nicefrac{3}{2}}+1\right)\cdot\sqrt{\frac{\log(np)}{n}}\pm F_1F_2^{-1}\left(\frac{(\l_2\l_1^{-1})^{\nicefrac{|J|}{2}}}{\sqrt{np}}+ \frac{(2\pi)^{-\nicefrac{|J|}{2}}}{e^{n\cdot{\l_1^2}(4q\l_2)^{-1}}}\right)\right)\\
&=(2\pi)^{\nicefrac{|J|}{2}}f_J(\phatj) \left|H_J(\phatj)\right|^{-\nicefrac{1}{2}}\exp\left\{\sum_i \ell_i(\phatj)\right\}\cdot\left(1\pm \left(4F_3F_2^{-1}\l_1^{-\nicefrac{1}{2}}+2q\l_3\l_1^{-\nicefrac{3}{2}}+2\right)\cdot\sqrt{\frac{\log(np)}{n}}\right)\;,
\end{align*}
for sufficiently large $n$.
\end{proof}

\section{Proof of Theorem~\ref{thm:BIC_consistent}}\label{appendix:BIC_consistent}

{\bf Incorrect models.}
Fix any $J\not\supset J^*$ with $|J|\leq q$. We first consider the loss in likelihood resulting from excluding one (or more) of the true covariates. Recall that $\sqrt{\frac{\log(np)}{n}}=\mathbf{o}\left(\min\left\{\left|\phi^*_j\right| :j\in J^*\right\}\right)$ by assumption---we use this in several inequalities below, marked with a $\star$.

We apply Lemma~\ref{lem:WHP}(ii) to the set $J'=J\cup J^*$ with $\psi_{J'}=\phatj-\phi^*$, and obtain
\begin{align*}
\log L_{[n]}(\phatj)-\log L_{[n]}(\phi^*)&=\log L_{[n]}(\phi^*+(\phatj-\phi^*))-\log L_{[n]}(\phi^*) \\
&\leq -\frac{\l_1^*}{2}n\cdot\left\|\phatj-\phi^*\right\|_2\left(\min\left\{1,\left\|\phatj-\phi^*\right\|_2\right\}-\tau\sqrt{\frac{\log\left(n^{\a}p^{1+\b}\right)}{n}}\right)\\
&\stackrel{\star}{\leq} -\frac{\l_1^*}{2}n\cdot \min_{j\in J^*}\left|\phi^*_j\right|\left(\min\left\{1,\min_{j\in J^*}\left|\phi^*_j\right|\right\}-\frac{1}{2}\min_{j\in J^*}\left|\phi^*_j\right|\right)\\
&\leq -\frac{\l_1^*n}{4} \cdot \min_{j\in J^*}\left|\phi^*_j\right|^2\;.\end{align*}
Then
\begin{align*}
\mathrm{BIC}_{\g}(J)-\mathrm{BIC}_{\g}(J^*)&=-2\log L_{[n]}(\phatj)+2\log L_{[n]}(\widehat{\phi}_{J^*})+\left(|J|-|J^*|\right)\log(n)+2\g\left(|J|-|J^*|\right)\log(p)\\
&\geq -2\log L_{[n]}(\phatj)+2\log L_{[n]}(\phi^*)+\left(|J|-|J^*|\right)\log(n)+2\g\left(|J|-|J^*|\right)\log(p)\\
&\geq  \frac{\l_1^*n}{2} \cdot \min_{j\in J^*}\left|\phi^*_j\right|^2-2q\log\left(n^{\nicefrac{1}{2}}p^{\gamma}\right)\\
&\stackrel{\star}{\geq}\frac{\l_1^*n}{2} \cdot \min_{j\in J^*}\left|\phi^*_j\right|^2-\frac{\l_1^*n}{4} \cdot \min_{j\in J^*}\left|\phi^*_j\right|^2\\
&\geq  \frac{\l_1^*n}{4} \cdot \min_{j\in J^*}\left|\phi^*_j\right|^2\\
&\stackrel{\star}{\geq} \log(p)\cdot\left(\g-\left(1-\frac{1}{2\kappa}+\b+\frac{\a}{\kappa}\right)\right)\;,\end{align*}
for sufficiently large $n$.

{\bf True models.}
Fix $J\supsetneq J^*$ with $|J|\leq q$. We first compute an upper bound on the increase in likelihood due to including additional (false) covariates. For sufficiently large $n$, we apply Lemma~\ref{lem:WHP}(ii) and (iv) and obtain, for some $t\in[0,1]$,
\begin{align*}
&\log L_{[n]}(\phatj)-\log L_{[n]}(\phi^*)\\
&= (\phatj-\phi^*)^Ts_J(\phi^*)-\frac{1}{2}(\phatj-\phi^*)^TH_J(\phi^*+t(\phatj-\phi))(\phatj-\phi^*)\\
&\leq(\phatj-\phi^*)^Ts_J(\phi^*)-\frac{1}{2}(\phatj-\phi^*)^TH_J(\phi^*)(\phatj-\phi^*)+\frac{1}{2}\|\phatj-\phi^*\|^2_2\left\|H_J(\phi^*)-H_J(\phi^*+t(\phatj-\phi^*))\right\|_{\mathrm{sp}}\\
&\leq(\phatj-\phi^*)^Ts_J(\phi^*)-\frac{1}{2}(\phatj-\phi^*)^TH_J(\phi^*)(\phatj-\phi^*)+\frac{1}{2}\|\phatj-\phi^*\|^3_2\cdot n\lambda_3\\
&\leq\left[(\phatj-\phi^*)^Ts_J(\phi^*)-\frac{1}{2}(\phatj-\phi^*)^TH_J(\phi^*)(\phatj-\phi^*)\right]+\frac{1}{2}\left(\tau\sqrt{\frac{\log\left(n^{\a}p^{1+\b}\right)}{n}}\right)^3\cdot n\l_3\\
&\leq\sup_{z\in\R^J}\left(z^Ts_J(\phi^*)-\frac{1}{2}z^TH_J(\phi^*)z\right)+\frac{\l_3\tau^3}{2}\sqrt{\frac{\log\left(n^{\a}p^{1+\b}\right)}{n}}\cdot \log\left(n^{\a}p^{1+\b}\right)\\
&=\frac{1}{2}s_J(\phi^*)^TH_J(\phi^*)^{-1}s_J(\phi^*)+ \frac{\l_3\tau^3}{2}\sqrt{\frac{\log\left(n^{\a}p^{1+\b}\right)}{n}}\cdot \log\left(n^{\a}p^{1+\b}\right)\\
&\leq \left(1+\left(C_1+\frac{\l_3\tau^3}{2}\right)\sqrt{\frac{\log\left(n^{\a}p^{1+\b}\right)}{n}}+C_2\frac{1}{\log(n)}\right)\left|J\backslash J^*\right|\log\left(n^{\a}p^{1+\b}\right)\;,
\end{align*}
where the last inequality is obtained by applying Lemma~\ref{lem:WHP}(i). Hence,
\begin{align*}
&\mathrm{BIC}_{\g}(J)-\mathrm{BIC}_{\g}(J^*)\\
&= -2\log L_{[n]}(\phatj)+2\log L_{[n]}(\phat_{J^*})+|J\backslash J^*|\log(n)+2\g|J\backslash J^*|\log(p)\\
&\geq -2\log L_{[n]}(\phatj)+2\log L_{[n]}(\phi^*)+|J\backslash J^*|\log(n)+2\g|J\backslash J^*|\log(p)\\
&\geq -2\left(1+\left(C_1+\frac{\l_3\tau^3}{2}\right)\sqrt{\frac{\log\left(n^{\a}p^{1+\b}\right)}{n}}+C_2\frac{1}{\log(n)}\right)\left|J\backslash J^*\right|\log\left(n^{\a}p^{1+\b}\right)
+2|J\backslash J^*|\log\left(n^{\nicefrac{1}{2}}p^{\g}\right)\\
&=2|J\backslash J^*|\log(p)\cdot\left(\g+\frac{1}{2\kappa_n}-\left(1+\left(C_1+\frac{\l_3\tau^3}{2}\right)\sqrt{\frac{\log\left(n^{\a}p^{1+\b}\right)}{n}}+C_2\frac{1}{\log(n)}\right)\left(1+\b+\frac{\a}{2\kappa_n}\right)\right)\\
&=2|J\backslash J^*|\log(p)\cdot\left(\g+\frac{1}{2\kappa_n}-\left(1+\mathbf{o}(1)\right)\left(1+\b+\frac{\a}{2\kappa_n}\right)\right)\\
&\geq \log(p)\cdot\left(\g+\frac{1}{2\kappa}-\left(1+\b+\frac{\a}{2\kappa}\right)\right)\;,
\end{align*}
for sufficiently large $n$.

\section{Proof of Theorem~\ref{thm:Ising}}\label{appendix:Ising}
\begin{reptheorem}{thm:Ising}
Assume that conditions~(C1)-(C4) hold. Let $X_{1\bul},\dots,X_{n\bul}\in\{0,1\}^p$ be i.i.d.\ draws from an Ising model with parameters $\zeta^*\in\R^p$ and $\Theta^*\in\R^{p\times p}$, where $\Theta^*$ is symmetric with zero diagonals. Let $G^*$ be the graph with edges indicating the nonzero entries of $\Theta^*$, and for each node $j$, let $\mathcal{S}^*_j$ denote its true neighborhood, that is, $\mathcal{S}_j=\{k\neq j:\Theta^*_{jk}\neq 0\}$.
Choose three scalars $\a,\b,\g$ to satisfy
 \begin{equation*}
   \left\{\text{\begin{tabular}{ll}\T\B
         $\g>1-\tfrac{1}{2\kappa}+\b+\tfrac{\a}{\kappa}$,&if 
         $\kappa>0$,\\ 
         \T\B$\a\in\left(0,\tfrac{1}{2}\right)$ and $\b>0$,&if
         $\kappa=0$.\\\end{tabular}}\right.
 \end{equation*}
  Then, for sufficiently large $n$, the event that the inequalities
 \[
 \mathrm{BIC}_{\g}(\mathcal{S}^*_j)<\min\left\{\mathrm{BIC}_{\g}(\mathcal{S}_j)
 :  \mathcal{S}_j\not\ni j, \ \mathcal{S}_j\neq \mathcal{S}_j^*, \
 |\mathcal{S}_j|\leq
 q\right\}-\log(p)\cdot\left(\g-\left(1-\frac{1}{2\kappa}+\b+\frac{\a}{\kappa}\right)\right)
 \]
 hold simultaneously for all $j$ has probability at least
 $1-n^{-\a}p^{-(\b-1)}$. In particular, the EBIC is consistent for
 neighborhood selection (simultaneously for all nodes) in the Ising
 model, whenever $\g>2-\frac{1}{2\kappa}$.

\end{reptheorem}

\begin{proof}  Considering each of the $p$ regressions separately, we obtain consistency of the extended BIC with probability at least $1-n^{-\a}p^{-\b}$ via Theorem~\ref{thm:BIC_consistent}, as long as all the conditions~(B1)-(B5) hold. Using our assumptions for this current theorem, all these conditions hold by assumption, except for the eigenvalue bounds on $\EE\left[X_{1J}^{}X_{1J}^T\right]$ for all $|J|\leq 2q$, which we now derive  from properties of the logistic model combined with the conditions assumed to be true.

We need to find constants $a_1,a_2>0$ such that, for all $|J|\leq 2q$, $a_1\mathbf{I}_J\preceq \EE\left[X_{1J}^{}X_{1J}^T\right]\preceq a_2\mathbf{I}_J$. We now show that setting $a_1=\frac{1}{2q}\frac{e^{a_3(q+1)}}{\left(1+e^{a_3(q+1)}\right)}$ and $a_2=2q$ will satisfy this bound.

Fix any unit vector $u$ with support on $|J|\leq 2q$. We will show that $a_1\leq \EE\left[(X_{1J}^Tu)^2\right]\leq a_2$. Since $(X_{11},\dots,X_{1p})$ takes values in $\{0,1\}^p$, we have $\EE\left[(X_{1J}^Tu)^2\right]\leq \|u\|^2_1\leq 2q\|u\|^2_2=2q=a_2$. Next, we find a lower bound. Choose $j_0$ to maximize $u_{j_0}^2$; then $u_{j_0}^2\geq \frac{1}{2q}$. Let $J_0=J\backslash\{j_0\}$. We have
\begin{align*}
\EE\left[(X_{1J}^Tu)^2\right]&= \EE\left[\EE\left[(X_{1J}^Tu)^2 | X_{1J_0}\right]\right]\\
&= \EE\left[(X_{1J_0}^Tu_{j_0})^2+ 2 (X_{1J_0}^Tu_{J_0}) u_{j_0}\EE\left[X_{1j_0}|X_{1J_0}\right]+ u_{j_0}^2\EE\left[X_{1j_0}^2|X_{1J_0}\right]\right]\\
& = \EE\left[((X_{1J_0};\EE\left[X_{1j_0}|X_{1J_0}\right])^Tu_{j_0})^2+ u_{j_0}^2\left(\EE\left[X_{1j_0}^2|X_{1J_0}\right] - \EE\left[X_{1j_0}|X_{1J_0}\right]^2\right)\right]\\
& = \EE\left[((X_{1J_0};\EE\left[X_{1j_0}|X_{1J_0}\right])^Tu_{j_0})^2+ u_{j_0}^2\mathrm{Var}\left(X_{1j_0}|X_{1J_0}\right)\right]\\
& \geq  u_{j_0}^2\EE\left[\mathrm{Var}\left(X_{1j_0}|X_{1J_0}\right)\right]\\
& \geq  \frac{1}{2q}\EE\left[\mathrm{Var}\left(X_{1j_0}|X_{1J_0}\right)\right]\;.
\end{align*}

Now take any fixed value of $x_{[p]\backslash\{j_0\}}$. Using the logistic model,
$$\mathrm{Var}\left(X_{1j_0}|X_{1,[p]\backslash\{j_0\}}=x_{[p]\backslash\{j_0\}}\right) = \frac{\exp\left\{\zeta_{j_0}+\sum_{k\neq j_0} x_k \Theta^*_{j_0k}\right\}}{\left(1+\exp\left\{\zeta_{j_0}+\sum_{k\neq j_0} x_k \Theta^*_{j_0k}\right\}\right)^2}\;.$$
We also have
$\left|\zeta_{j_0}+\sum_{k:(j_0,k)\in G^*}x_k \Theta^*_{j_0k}\right|\leq |\zeta_{j_0}|+q \sup_{j,k}|\Theta^*_{jk}| \leq a_3(q+1)$,
and so
$$\mathrm{Var}\left(X_{1j_0}|X_{1,[p]\backslash\{j_0\}}=x_{[p]\backslash\{j_0\}}\right) \geq  \min_{|t|\leq a_3(q+1)}\frac{e^t}{(1+e^t)^2} = \frac{e^{a_3(q+1)}}{\left(1+e^{a_3(q+1)}\right)}\;.$$
Since this is true for any $x_{[p]\backslash\{j_0\}}$, we therefore have $\mathrm{Var}(X_{1j_0} | X_{1J_0})\geq \frac{e^{a_3(q+1)}}{\left(1+e^{a_3(q+1)}\right)}$ everywhere, and so
$$\EE\left[(X_{1\bul}^Tu)^2\right]\geq \frac{1}{2q}\EE\left[\mathrm{Var}\left(X_{1j_0}|X_{1J_0}\right)\right] \geq \frac{1}{2q}\frac{e^{a_3(q+1)}}{\left(1+e^{a_3(q+1)}\right)} = a_1\;.\qedhere$$
\end{proof}

\section{Proof of Lemma~\ref{lem:WHP}}\label{appendix:WHP}

\begin{replemma}{lem:WHP}
  Fix any $\a,\b>0$.  Assume (B1)-(B5) hold, and that either (A1) or
  (A2) holds.  For sufficiently large $n$, with probability at least
  $1-n^{-\a}p^{-\b}$ under (A1), or with probability at least
  $1-n^{-\a}p^{-\b}-4K^{K+1}n^{-\frac{K-2\kappa}{2}}$ under (A2), the
  following statements are all true.  The symbols $C_1$, $C_2$,
  $\l_1^*$, $\tau$, $R$, $\l_1$, $\l_2$, and $\l_3$ appearing in the
  statements represent constants that do not depend on $n$, $p$, or on
  the data, but generally are functions of other constants appearing
  in our assumptions.
\begin{itemize}
\item[(i)]The gradient of the likelihood is bounded at the true parameter vector $\phi^*$:
\begin{equation}\label{eq:score}\left\|\left(H_J(\phi^*)^{-\nicefrac{1}{2}}\right)s_J(\phi^*)\right\|_{2}<\sqrt{2\left(1+\e_n\right)\left|J\backslash J^*\right|\log\left(n^{\a}p^{1+\b}\right)}\text{ for all $J\supsetneq J^*$ with $|J|\leq 2q$}\;,\end{equation}
where $\e_n=C_1\sqrt{\frac{\log\left(n^{\a}p^{1+\b}\right)}{n}}+C_2\frac{1}{\log(n)}=\mathbf{o}(1)$.
\item[(ii)] Likelihood is upper-bounded by a quadratic function:
\begin{equation}\label{eq:phi_dist}\log \left(\frac{L_{[n]}(\phi^*+\psi_J)}{L_{[n]}(\phi^*)}\right)\leq -\frac{\l_1^*n}{2}\|\psi_J\|_2\left(\min\{1,\|\psi_J\|_2\}-\tau\sqrt{\frac{\log\left(n^{\a}p^{1+\b}\right)}{n}}\right)\text{ for all $|J|\leq 2q$, $\psi_J\in\R^J$}\;.\end{equation}
\item[(iii)]For all sparse models, the MLE lies inside a compact set:
\begin{equation}\label{eq:phat}\big\|\phatj\big\|_2\leq R\text{ for all $|J|\leq 2q$}\;.\end{equation}
\item[(iv)]The eigenvalues of the Hessian are bounded from above and below, and local changes in the Hessian are bounded from above, on the relevant compact set:
\begin{align}&\label{eq:Hessian}\text{For all $|J|\leq 2q$, $\|\phi_J\|_2\leq R+1$, } \lambda_1\mathbf{I}_J\preceq \tfrac{1}{n}H_J(\phi_J)\preceq \lambda_2 \mathbf{I}_J\;,\\
&\label{eq:Hessian_diff}\text{and for all $\|\phi_J\|_2,\|\phi'_J\|_2\leq R+1$, } \tfrac{1}{n}\left(H_J(\phi_J)-H_J(\phi'_J)\right)\preceq \|\phi_J-\phi'_J\|_2 \lambda_3\mathbf{I}_J\;.\end{align}
\end{itemize}\end{replemma}

We present the proofs of the various claims separately.

\subsection{Bound on the score at $\phi^*$: proving~(\ref{eq:score})}\label{appendix:score}

The following lemma is proved later, in Section~\ref{appendix:Hessian}.
\begin{lemma}\label{lem:Hessian}
Fix any radius $r>0$. There exist finite positive constants $c$, $\b_1=\b_1(r)$, $\b_2=\b_2(r)$, and $\b_3=\b_3(r)$ such that
\begin{align*}
&\b_1\mathbf{I}_J\preceq\tfrac{1}{n}H_J(\phi_J)\preceq \b_2\mathbf{I}_J\text{ for all $|J|\leq 2q$ and all $\|\phi_J\|_2\leq r$,}\\
&\text{and }\tfrac{1}{n}\left(H_J(\phi_J)-H_J(\phi'_J)\right)\preceq \|\phi_J-\phi'_J\|_2\cdot \b_3\mathbf{I}_J\text{ for all $|J|\leq 2q$ and all $\|\phi_J\|_2,\|\phi'_J\|_2\leq r$,}
\end{align*}
 with probability at least $1-p^{2q}e^{-cn}$ under (A1) or  with probability at least $1-2K^{K+1}pn^{-\nicefrac{K}{2}}-p^{2q}e^{-cn}$ under (A2).\end{lemma}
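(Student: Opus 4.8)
The plan is to write the restricted negated Hessian of the GLM log-likelihood explicitly as the weighted empirical moment matrix
\[
H_J(\phi_J)=\sum_{i=1}^n \bb''\!\left(X_{iJ}^T\phi_J\right)X_{iJ}^{}X_{iJ}^T\,,
\]
and to separate the \emph{deterministic} control of the variance function $\bb''$ and its derivative $\bb'''$ from the \emph{probabilistic} control of the covariate moment matrices. For the increment, the mean value theorem applied to $\bb''$ gives, for intermediate points $\tilde\theta_i$ lying between $X_{iJ}^T\phi_J$ and $X_{iJ}^T\phi'_J$,
\[
H_J(\phi_J)-H_J(\phi'_J)=\sum_i \bb'''(\tilde\theta_i)\left[X_{iJ}^T(\phi_J-\phi'_J)\right]X_{iJ}^{}X_{iJ}^T\,,
\]
so that for any unit vector $u$ supported on $J$ the increment is bounded by $\|\phi_J-\phi'_J\|_2\sum_i|\bb'''(\tilde\theta_i)|\,\|X_{iJ}\|_2\,(X_{iJ}^Tu)^2$, while the quadratic form $u^TH_J(\phi_J)u$ is sandwiched between $(\min_i\bb'')\sum_i(X_{iJ}^Tu)^2$ and $(\max_i\bb'')\sum_i(X_{iJ}^Tu)^2$. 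Thus all three required bounds reduce to (a) ranges for $\bb''$ (bounded above, and below away from zero) and $\bb'''$ over the relevant arguments, and (b) the statement that the empirical second moment matrix $\tfrac1n\sum_iX_{iJ}^{}X_{iJ}^T$, together with the weighted third-moment quantity appearing in the increment, is uniformly close over all $|J|\le 2q$ to its population version, whose eigenvalues lie in $[a_1,a_2]$ by (B3).

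Under (A1) the arguments satisfy $|X_{iJ}^T\phi_J|\le \AA\sqrt{2q}\,r$ whenever $\|\phi_J\|_2\le r$, so they lie in a fixed compact interval on which $\bb''$ is bounded above and below away from zero and $\bb'''$ is bounded (as holds for the logistic, Poisson, and Gaussian cases). The remaining task is uniform concentration of $\tfrac1n\sum_iX_{iJ}^{}X_{iJ}^T$: for each fixed $J$, the bounded entries permit a matrix Hoeffding/Bernstein inequality placing this matrix within $\EE[X_{1J}^{}X_{1J}^T]\pm\e\mathbf{I}_J$ off an event of probability $e^{-cn}$, and a union bound over the at most $p^{2q}$ sets of size $\le 2q$ yields the factor $p^{2q}e^{-cn}$. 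Taking $\b_1,\b_2$ as the products of the $\bb''$-bounds with $(a_1/2,\,2a_2)$ and $\b_3$ as the $\bb'''$-bound times $\AA\sqrt{2q}\cdot 2a_2$ finishes this case. For the lower bound one may alternatively invoke Lemma~\ref{lem:pos_def_H} for each $J$ of size $2q$ (smaller sets following by restriction to principal submatrices) and union-bound over the $p^{2q}$ such sets, which produces the same $p^{2q}e^{-cn}$ term and in fact requires only a fourth-moment hypothesis.

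Under (A2) the covariates are unbounded, so the weights $\bb''(X_{iJ}^T\phi_J)$ and the cubic covariate products in the increment are random and possibly large, and a direct exponential inequality is unavailable. The strategy is truncation: work on the good event $\mathcal{E}$ on which $\max_{i,j}|X_{ij}|$ stays below a threshold $t_n$ chosen to grow slowly with $n$. On $\mathcal{E}$ one has $|X_{iJ}^T\phi_J|\le\sqrt{2q}\,t_n r$, which bounds the arguments of $\bb''$ and $\bb'''$, so the (A1)-style analysis applies with constants that now depend on $t_n$ only through the controlled quantities. The complement of $\mathcal{E}$ and the residual heavy-tail contributions are handled by Markov's inequality applied to the prescribed moments: $\EE[|X_{1j}|^{6K}]\le\AA_K$ bounds the covariate sizes and $\BB_K(t)<\infty$ controls $\EE[\sup|\bb'''|^{2K}]$ (and, via $\bb''(\theta)=\bb''(0)+\int_0^\theta\bb'''$, also $\bb''$), giving the additional failure probability of order $K^{K+1}p\,n^{-K/2}$; here $K>2\kappa$ guarantees this term vanishes. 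Hölder's inequality is used to combine the $L^{2K}$ control of $\bb'''$ with covariate moments, the cubic products in the increment being the reason moments up to order $6K$ are assumed.

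The main obstacle is the interaction, under (A2), between uniformity over the $p^{2q}$ sparse index sets and the unboundedness of the summands: the random weights $\bb''$ and the weighted third moments are controlled only in an $L^{2K}$ sense, so the concentration must proceed through truncation rather than a single exponential bound, and the truncation level $t_n$ must be tuned to make the exponential union-bound term $p^{2q}e^{-cn}$ and the polynomial moment term $n^{-K/2}$ simultaneously small. Everything else reduces to bounded-covariate matrix concentration together with elementary monotonicity and integral properties of the cumulant function $\bb$.
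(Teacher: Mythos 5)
Your reduction and your treatment of case (A1) are sound: writing $H_J(\phi_J)=\sum_i \bb''(X_{iJ}^T\phi_J)X_{iJ}^{}X_{iJ}^T$, bounding $\bb''$ and $\bb'''$ deterministically on the compact interval $|\theta|\le\AA\sqrt{2q}\,r$, and combining matrix concentration of $\tfrac{1}{n}\sum_i X_{iJ}^{}X_{iJ}^T$ with a union bound over the at most $p^{2q}$ index sets is a legitimate (and arguably more standard) alternative to the paper's route, which instead obtains the lower bound from the block argument of Lemma~\ref{lem:pos_def_H} and the upper/Lipschitz bounds deterministically under (A1).

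The genuine gap is in case (A2). Truncating at a threshold $t_n$ that grows with $n$ cannot deliver the lemma as stated, because the constants your ``(A1)-style analysis on the good event'' produces are extrema of $\bb''$ and $\bb'''$ over the interval $|\theta|\le\sqrt{2q}\,r\,t_n$, and these are \emph{not} independent of $n$. Most acutely, for the lower bound: in the logistic model $\bb''(\theta)=e^{\theta}/(1+e^{\theta})^2\to 0$ exponentially as $|\theta|\to\infty$, so $\inf\{\bb''(\theta):|\theta|\le\sqrt{2q}\,r\,t_n\}\to 0$ and your $\b_1$ decays with $n$ (similarly $\b_2,\b_3$ blow up for Poisson). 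Moreover, making $\PP\{\max_{i,j}|X_{ij}|>t_n\}$ comparable to $pn^{-\nicefrac{K}{2}}$ by Markov with $6K$-th moments forces $t_n\gtrsim n^{(1+K/2)/(6K)}$, after which the truncated Hoeffding/Bernstein exponent degrades from $n$ to roughly $n/t_n^4$, so you also cannot recover the claimed $p^{2q}e^{-cn}$ term. The paper's Lemma~\ref{lem:pos_def_H} is designed precisely to avoid this: it partitions the sample into blocks of \emph{constant} size $N=\lceil 80q^2ma_1^{-2}\rceil$, asks only that within a block the covariates stay below a \emph{constant} threshold (event $F^{(i_0)}$) and the block's empirical second-moment matrix be close to its mean (event $E^{(i_0)}$)---each holding with probability at least $0.8$ by Markov/Chebyshev---and then gets the $e^{-cn}$ rate from a Chernoff bound on the Binomial number of good blocks; since a constant fraction of good blocks suffices, the resulting constant $\tfrac{a_1}{4}\inf\{\bb''(\theta):|\theta|\le 20q^2r\sqrt{m}\,N\}$ is free of $n$. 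For the upper and Lipschitz bounds under (A2) no truncation is needed at all: the paper (Lemma~\ref{lem:upper_bound_H}) bounds $\sum_i\|X_{iJ}\|_2^3\,|\bb'''|\le\tfrac{1}{2}\sum_i\bigl(\|X_{iJ}\|_2^6+|\bb'''|^2\bigr)$ and controls each sum by Markov via $\EE\bigl[|X_{1j}|^{6K}\bigr]\le\AA_K$ and $\BB_K(r\sqrt{2q})<\infty$, the latter bounding the supremum of $\bb'''$ over the \emph{random} interval $|\theta|\le r\sqrt{2q}\,|X_{ij}|$ in expectation; this is exactly what produces the $2K^{K+1}pn^{-\nicefrac{K}{2}}$ term with $n$-free constants. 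You gesture at this with your H\"older remark, but embedding it in a growing-threshold truncation scheme leaves the lower bound, and the $n$-independence of all three constants, unproven.
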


For large $n$,  since $\log(p)=\mathbf{o}(n)$ and $q$ is constant, we have $p^{2q}e^{-cn}<e^{-cn/2}<\frac{1}{3}n^{-\a}p^{-\b}$. Therefore,
by Lemma~\ref{lem:Hessian}, with probability at  least $1-\frac{1}{3}n^{-\a}p^{-\b}$ under (A1) or  with probability at least $1-\frac{1}{3}n^{-\a}p^{-\b} -2K^{K+1}pn^{-\nicefrac{K}{2}}$ under (A2),
\begin{align}
\label{eq:HessStar1}&\l_1^*\mathbf{I}_J\preceq\tfrac{1}{n}H_J(\phi_J)\preceq \l_2^*\mathbf{I}_J\text{ for all $|J|\leq 2q$ and all $\|\phi_J\|_2\leq a_3+1$,}\\
\label{eq:HessStar2}&\text{and }\tfrac{1}{n}\left(H_J(\phi_J)-H_J(\phi'_J)\right)\preceq \|\phi_J-\phi'_J\|_2\cdot \l_3^*\mathbf{I}_J\text{ for all $|J|\leq 2q$ and all $\|\phi_J\|_2,\|\phi'_J\|_2\leq a_3+1$,}
\end{align}
where $\l_k^*\coloneqq \b_k(a_3+1)$ for $k=1,2,3$. For the remainder of these proofs we assume that~(\ref{eq:HessStar1}) and~(\ref{eq:HessStar2}) are true.

We now bound the magnitude of the score. (We adapt the proof from \citet{Chen:2011}).  By Lemma 2 of \citet{Chen:2011}, there is a constant $U_0$ such that, for all $J$ with $|J|\leq 2q$, there exists a set of unit vectors $\UJ\subset \R^J$ with $|\UJ|\leq U_0$,  such that for all $v\in\R^J$, $\|v\|_2\leq \sqrt[4]{1+\e}\max_{u\in\UJ}u^Tv$.

Now fix any $J\supsetneq J^*$ with $|J|\leq 2q$, and any $u\in\UJ$. Below, we will show that
\begin{align*}
&\PP\left\{u^T\left(H_J(\phi^*)^{-\nicefrac{1}{2}}\right)s_J(\phi^*)\geq \sqrt{2\sqrt{1+\e}\left|J\backslash J^*\right|\log\left(n^{\a}p^{1+\b}\right)}\right\}\\
&\leq \exp\left\{-\sqrt{1+\e}\left|J\backslash J^*\right|\log\left(n^{\a}p^{1+\b}\right)\left(1-\sqrt{\frac{{\sqrt{1+\e}\cdot 2q\log\left(n^{\a}p^{1+\b}\right)}}{(\l_1^*)^{3}(\l_3^*)^{-2}n}}\right)\right\}\;.
\end{align*}
By the definition of $\UJ$, we then have
\begin{align*}
&\PP\left\{\left\|\left(H_J(\phi^*)^{-\nicefrac{1}{2}}\right)s_J(\phi^*)\right\|_2 \geq \sqrt{2\sqrt{1+\e}\left|J\backslash J^*\right|\log\left(n^{\a}p^{1+\b}\right)} \cdot \sqrt[4]{1+\e}\right\}\\
&\leq \sum_{u\in\UJ}\PP\left\{u^T\left(H_J(\phi^*)^{-\nicefrac{1}{2}}\right)s_J(\phi^*)\geq \sqrt{2\sqrt{1+\e}\left|J\backslash J^*\right|\log\left(n^{\a}p^{1+\b}\right)}\right\}\\
&\leq \sum_{u\in\UJ} \exp\left\{-\sqrt{1+\e}\left|J\backslash J^*\right|\log\left(n^{\a}p^{1+\b}\right)\left(1-\sqrt{\frac{{\sqrt{1+\e}\cdot 2q\log\left(n^{\a}p^{1+\b}\right)}}{(\l_1^*)^{3}(\l_3^*)^{-2}n}}\right)\right\}\\
&\leq U_0 \exp\left\{-\sqrt{1+\e}\left|J\backslash J^*\right|\log\left(n^{\a}p^{1+\b}\right)\left(1-\sqrt{\frac{{\sqrt{1+\e}\cdot 2q\log\left(n^{\a}p^{1+\b}\right)}}{(\l_1^*)^{3}(\l_3^*)^{-2}n}}\right)\right\}\;.
\end{align*}
  Therefore,
   \begin{align*}
  &\PP\left\{\exists J\subset [p],J\supsetneq J^*,|J|\leq 2q,\left\|\left(H_J(\phi^*)^{-\nicefrac{1}{2}}\right)s_J(\phi^*)\right\|_{2}\geq \sqrt{2(1+\e)\left|J\backslash J^*\right|\log\left(n^{\a}p^{1+\b}\right)}\right\}\\ 
  &\leq\sum_{N=1}^{2q-|J^*|}\sum_{J\subset [p],J\supsetneq J^*,\left|J\backslash J^*\right|=N}\PP\left\{\left\|\left(H_J(\phi^*)^{-\nicefrac{1}{2}}\right)s_J(\phi^*)\right\|_{2}\geq  \sqrt{2\sqrt{1+\e}\left|J\backslash J^*\right|\log\left(n^{\a}p^{1+\b}\right)} \cdot \sqrt[4]{1+\e}\right\}\\
  &\leq\sum_{N=1}^{2q-|J^*|}U_0\cdot{p\choose N}\cdot \exp\left\{-\sqrt{1+\e}\left|J\backslash J^*\right|\log\left(n^{\a}p^{1+\b}\right)\left(1-\sqrt{\frac{{\sqrt{1+\e}\cdot 2q\log\left(n^{\a}p^{1+\b}\right)}}{(\l_1^*)^{3}(\l_3^*)^{-2}n}}\right)\right\}\\
  &\leq\sum_{N=1}^{2q-|J^*|}\exp\left\{-N\sqrt{1+\e}\log\left(n^{\a}p^{1+\b}\right)\left(1-\sqrt{\frac{{\sqrt{1+\e}\cdot 2q\log\left(n^{\a}p^{1+\b}\right)}}{(\l_1^*)^{3}(\l_3^*)^{-2}n}}\right)+N\log(p)+\log(U_0)\right\}\\
  &\leq\sum_{N=1}^{\infty}\exp\left\{-N\sqrt{1+\e}\log\left(n^{\a}p^{\b}\right)\left(1-\sqrt{\frac{{\sqrt{1+\e}\cdot 2q\log\left(n^{\a}p^{1+\b}\right)}}{(\l_1^*)^{3}(\l_3^*)^{-2}n}}-\alpha^{-1}\log_n(U_0)\right)\right\}\;.
 \end{align*}
 We can simplify the expression above, as long as $\e$ is large enough to allow us to remove the vanishing terms inside the parentheses. In fact, for
 $$\e= 3\sqrt{\frac{{4q\log\left(n^{\a}p^{1+\b}\right)}}{(\l_1^*)^{3}(\l_3^*)^{-2}n}}+6\alpha^{-1}\log_n(U_0)\coloneqq C_1\sqrt{\frac{\log\left(n^{\a}p^{1+\b}\right)}{n}}+C_2\frac{1}{\log(n)}\;,$$
we get
   \begin{align*}
  &\PP\left\{\exists J\subset [p],J\supsetneq J^*,|J|\leq 2q,\left\|\left(H_J(\phi^*)^{-\nicefrac{1}{2}}\right)s_J(\phi^*)\right\|_{2}\geq \sqrt{2(1+\e)\left|J\backslash J^*\right|\log\left(n^{\a}p^{1+\b}\right)}\right\}\\ 
 &\leq\sum_{N=1}^{\infty}\exp\left\{-N\sqrt{1+\e}\log\left(n^{\a}p^{\b}\right)\left(1-\sqrt{\frac{{\sqrt{1+\e}\cdot 2q\log\left(n^{\a}p^{1+\b}\right)}}{(\l_1^*)^{3}(\l_3^*)^{-2}n}}-\alpha^{-1}\log_n(U_0)\right)\right\}\\
 &\leq\sum_{N=1}^{\infty}\exp\left\{-N\log\left(n^{\a}p^{\b}\right)-\log(6)\right\}
=\frac{n^{-\a}p^{-\b}}{6\left(1-n^{-\a}p^{-\b}\right)}\leq \frac{1}{3}n^{-\a}p^{-\b}\;,
 \end{align*}
which completes the proof, except that it remains to be shown that
\begin{align*}
&\PP\left\{u^T\left(H_J(\phi^*)^{-\nicefrac{1}{2}}\right)s_J(\phi^*)\geq \sqrt{2\sqrt{1+\e}\left|J\backslash J^*\right|\log\left(n^{\a}p^{1+\b}\right)}\right\}\\
&\leq \exp\left\{-\sqrt{1+\e}\left|J\backslash J^*\right|\log\left(n^{\a}p^{1+\b}\right)\left(1-\sqrt{\frac{{\sqrt{1+\e}\cdot 2q\log\left(n^{\a}p^{1+\b}\right)}}{(\l_1^*)^{3}(\l_3^*)^{-2}n}}\right)\right\}\;.
\end{align*}

The proof of this remaining inequality follows the techniques of \citet{Chen:2011}; we include it here for completeness, since we require a slightly more detailed analysis of the probabilities involved in order to obtain consistency results for the graphical models setting, as in Theorem~\ref{thm:Ising}.

Let $u\in\R^J$ be a unit vector. We now compute an upper bound on the quantity $u^T\left(H_J(\phi^*)^{-\nicefrac{1}{2}}\right)s_J(\phi^*)$ that holds with high probability. 
Since $s_J(\phi^*)=\sum_i X_{iJ}(Y_i-\mu_i)$, we have
$$u^T\left(H_J(\phi^*)^{-\nicefrac{1}{2}}\right)s_J(\phi^*)= \sum_i (Y_i-\mu_i)\cdot X_{iJ}^T\left(H_J(\phi^*)^{-\nicefrac{1}{2}}\right)u\;.$$

Next, for convenience we write $A\coloneqq \sqrt{2\sqrt{1+\e}\left|J\backslash J^*\right|\log\left(n^{\a}p^{1+\b}\right)}$ and $\psi_J=A\cdot \left(H_J(\phi^*)^{-\nicefrac{1}{2}}\right)u$.  Since $\mathrm{Var}\left(s_J(\phi^*)\right)=H_J(\phi^*)=\sum_i X_{iJ}X_{iJ}^T\bb''(X_{i\bul}^T\phi^*)$, we have 
 \begin{align*}
 \sum_i \left(X_{iJ}^T\psi_J\right)^2\cdot \bb''(X_{i\bul}^T\phi^*)
&= A^2\sum_i \left(X_{iJ}^T\left(H_J(\phi^*)^{-\nicefrac{1}{2}}\right)u\right)^2\cdot \bb''(X_{i\bul}^T\phi^*) \\
 &=A^2 u^T\left(H_J(\phi^*)^{-\nicefrac{1}{2}}\right)^T\left(\sum_i X_{iJ}X_{iJ}^T\bb''(X_{i\bul}^T\phi^*)\right)\left(H_J(\phi^*)^{-\nicefrac{1}{2}}\right)u\\
& =A^2\cdot u^T\left(H_J(\phi^*)^{-\nicefrac{1}{2}}\right)^T H_J(\phi^*) \left(H_J(\phi^*)^{-\nicefrac{1}{2}}\right) u
 =A^2\cdot u^Tu = A^2\;.\end{align*}
And,
$$\|\psi_J\|^2_2=A^2\left\|\left(H_J(\phi^*)^{-\nicefrac{1}{2}}\right)u\right\|^2_2
\leq A^2\cdot \left\|H_J(\phi^*)^{-1}\right\|_{\mathrm{sp}}\cdot\|u\|^2_2\leq A^2(\l_1^*n)^{-1}\;.$$
We then have
\begin{align*}
&\PP\left\{u^T\left(H_J(\phi^*)^{-\nicefrac{1}{2}}\right)s_J(\phi^*)\geq A\right\}\\
&=\EE\left[\one{A\sum_i (Y_i-\mu_i)\cdot X_{iJ}^T\left(H_J(\phi^*)^{-\nicefrac{1}{2}}\right)u\geq A^2}\right]\\
&=\EE\left[\one{\sum_i (Y_i-\mu_i)\cdot X_{iJ}^T\psi_J\geq A^2}\right]\\
&\leq \EE\left[\exp\left\{\sum_i (Y_i-\mu_i)\cdot X_{iJ}^T\psi_J-A^2\right\}\right]\\
\displaybreak&= \exp\left\{-A^2-\sum_i \mu_i\cdot X_{iJ}^T\psi_J\right\}\cdot \EE\left[\exp\left\{\sum_i Y_i\cdot X_{iJ}^T\psi_J\right\}\right]\\
&= \exp\left\{-A^2-\sum_i \mu_i\cdot X_{iJ}^T\psi_J\right\}\cdot\prod_i  \EE\left[\exp\left\{Y_i\cdot X_{iJ}^T\psi_J\right\}\right]\\
&= \exp\left\{-A^2-\sum_i \mu_i\cdot X_{iJ}^T\psi_J\right\}\cdot\prod_i
\exp\left\{ \left[\bb\left(X_{i\bul}^T\left(\phi^*+\psi_J\right)\right)-\bb\left(X_{i\bul}^T\phi^*\right)\right]\right\}\\
&= \exp\left\{-A^2-\sum_i \mu_i\cdot X_{iJ}^T\psi_J\right\}\cdot
\exp\left\{\sum_i \left[\bb\left(X_{i\bul}^T\left(\phi^*+\psi_J\right)\right)-\bb\left(X_{i\bul}^T\phi^*\right)\right]\right\}\;,
\end{align*}
where the next-to-last step comes from the properties of exponential families.

By the Taylor series approximation, for some $t\in[0,1]$, 
\begin{align*}
&\sum_i \left[\bb\left(X_{i\bul}^T\left(\phi^*+\psi_J\right)\right)-\bb\left(X_{i\bul}^T\phi^*\right)\right]\\
&=\sum_i \bb'\left(X_{i\bul}^T\phi^*\right)\cdot X_{i\bul}^T\psi_J+ \frac{1}{2}\bb''\left(X_{i\bul}^T\phi^*\right)\cdot (X_{i\bul}^T\psi_J)^2
+\frac{1}{2}(X_{i\bul}^T\phi^*)^2\left(\bb''\left(X_{i\bul}^T(\phi^*+t\psi_J)\right)-\bb''\left(X_{i\bul}^T\phi^*\right)\right)\\
&= \left(\sum_i \mu_i\cdot X_{i\bul}^T\psi_J + \frac{1}{2}\bb''\left(X_{i\bul}^T\phi^*\right)\cdot (X_{i\bul}^T\psi_J)^2\right)
+\frac{1}{2}\psi_J^T\left(\sum_i X_{iJ}X_{iJ}^T\left(\bb''\left(X_{i\bul}^T(\phi^*+t\psi_J)\right)-\bb''\left(X_{i\bul}^T\phi^*\right)\right)\right)\psi_J\\
&= \left(\sum_i \mu_i\cdot X_{i\bul}^T\psi_J\right)+\frac{A^2}{2}
+\frac{1}{2}\psi_J^T\left(H_J(\phi^*+t\psi_J)-H_J(\phi^*)\right)\psi_J
\leq \left(\sum_i \mu_i\cdot X_{i\bul}^T\psi_J \right)+\frac{A^2}{2}
+\frac{1}{2}\|\psi_J\|^3_2\cdot n\l_3^*\\
&\leq \left(\sum_i \mu_i\cdot X_{i\bul}^T\psi_J \right)+\frac{A^2}{2}
+\frac{A^3\l_3^*}{2(\l_1^*)^{1.5}n^{0.5}}\;.
\end{align*}
Continuing from above, we obtain the desired inequality as follows:
\begin{align*}
&\PP\left\{u^T\left(H_J(\phi^*)^{-\nicefrac{1}{2}}\right)s_J(\phi^*)\geq A\right\}\\
&\leq \exp\left\{-A^2-\sum_i \mu_i\cdot X_{iJ}^T\psi_J\right\}\cdot
\exp\left\{\sum_i \left[\bb\left(X_{i\bul}^T\left(\phi^*+\psi_J\right)\right)-\bb\left(X_{i\bul}^T\phi^*\right)\right]\right\}\\
&\leq \exp\left\{-\frac{A^2}{2}+\frac{A^3\l_3^*}{2(\l_1^*)^{1.5}n^{0.5}}\right\}\\
&= \exp\left\{-\sqrt{1+\e}\left|J\backslash J^*\right|\log\left(n^{\a}p^{1+\b}\right)\left(1-\sqrt{\frac{{\sqrt{1+\e}\cdot 2q\log\left(n^{\a}p^{1+\b}\right)}}{(\l_1^*)^{3}(\l_3^*)^{-2}n}}\right)\right\}\;.
\end{align*}

\subsection{Accuracy of MLE for true sparse models: proving~(\ref{eq:phi_dist})}

Assume that~(\ref{eq:HessStar1}), (\ref{eq:HessStar2}) and~(\ref{eq:score}) hold. Fix $J$ with $J\supset J^*$, $|J|\leq 2q$, and fix any $\psi_J$ with $\|\psi_J\|_2\leq 1$. Then
\begin{align*}&\log L_{[n]}(\phi^*+\psi_J)-\log L_{[n]}(\phi^*)=\psi_J^T s_{J}(\phi^*)-\frac{1}{2}\psi_J^T H_J(\phi^*+t\psi_J)\psi_J\\
&\leq \|\psi_J\|_2\cdot\|s_J(\phi^*)\|_2-\|\psi_J\|_2^2\cdot \frac{\l_1^*n}{2}\\
&\leq \|\psi_J\|_2\cdot\left\|\left(H_J(\phi^*)^{-\nicefrac{1}{2}}\right)s_J(\phi^*)\right\|_2\cdot \left\|H_J(\phi^*)\right\|^{\nicefrac{1}{2}}_{\mathrm{sp}}-\|\psi_J\|_2^2\cdot\frac{\l_1^*n}{2}\\
&\leq \|\psi_J\|_2\cdot\sqrt{2(1+\e)\left|J\backslash J^*\right|\log\left(n^{\a}p^{1+\b}\right)}\cdot \sqrt{\l_2^* n}-\|\psi_J\|_2^2\cdot\frac{\l_1^* n}{2}\\
&\leq -\frac{\l_1^*n}{2}\|\psi_J\|_2\left(\|\psi_J\|_2-\sqrt{\frac{\log\left(n^{\a}p^{1+\b}\right)}{n}}\cdot \sqrt{16(1+\e)q\l_2^*(\l_1^*)^{-2}}\right)\\
&= -\frac{\l_1^*n}{2}\|\psi_J\|_2\left(\|\psi_J\|_2-\tau\sqrt{\frac{\log\left(n^{\a}p^{1+\b}\right)}{n}}\right)\;,
\end{align*}
where $\tau=\sqrt{16(1+\e)q\l_2^*(\l_1^*)^{-2}}$. By convexity of the log-likelihood, this means that for all $\psi_J\in\R^J$,
$$\log L_{[n]}(\phi^*+\psi_J)-\log L_{[n]}(\phi^*)\leq -\frac{\l_1^*n}{2}\|\psi_J\|_2\left(\min\{1,\|\psi\|_2\}-\tau\sqrt{\frac{\log\left(n^{\a}p^{1+\b}\right)}{n}}\right)\;,$$
which proves (\ref{eq:phi_dist}). In particular, since $\log L_{[n]}(\phatj)\geq \log L_{[n]}(\phi^*)$, applying the convexity of log-likelihood, we must have (for sufficiently large $n$)
$$\|\phatj-\phi^*\|_2\leq \sqrt{\frac{\log\left(n^{\a}p^{1+\b}\right)}{n}}\cdot \tau\;.$$

\subsection{Compact set containing all sparse MLEs: proving~(\ref{eq:phat}),~(\ref{eq:Hessian}), and~(\ref{eq:Hessian_diff})}

Assume that~(\ref{eq:HessStar1}), (\ref{eq:HessStar2}), (\ref{eq:score}), and~(\ref{eq:phi_dist}) hold. Let
$$R\coloneqq 1+a_3+4(\l_1^*)^{-1}\left(\sqrt{2(1+\e)(1+\a+\b)a_3^2q \l_2^*}+\frac{1}{2}a_3^2\l_3^*\right)\;.$$
We now show that $\|\phatj\|_2\leq R$ for all $|J|\leq 2q$. We will use the fact that, since the zero coefficient vector $\mathbf{0}\in\R^p$ is contained in every model $J$, the coefficient vector $\phatj$ must yield higher likelihood than the vector $\mathbf{0}$. 

 First, we compute a lower bound for $L_{[n]}(\mathbf{0})$:
\begin{align*}\log L_{[n]}(\mathbf{0})-\log L_{[n]}(\phi^*)&=(-\phi^*)^T s_{J^*}(\phi^*)-\frac{1}{2}(-\phi^*)^T H_{J^*}(t\phi^*)(-\phi^*)\\
&=-\left(H_{J^*}(\phi^*)^{\nicefrac{1}{2}}\phi^*\right)^T\left(H_{J^*}(\phi^*)^{-\nicefrac{1}{2}}s_{J^*}(\phi^*)\right)-\frac{1}{2}\phi^*{}^T H_{J^*}(t\phi^*)\phi^*\\
&\geq -\sqrt{\phi^*{}^T H_{J^*}(\phi^*)\phi^*}\left\|H_{J^*}(\phi^*)^{-\nicefrac{1}{2}}s_{J^*}(\phi^*)\right\|_2 -\frac{1}{2}\phi^*{}^T H_{J^*}(t\phi^*)\phi^*\\
&\geq -\sqrt{a_3^2\cdot n\l_2^*}\cdot\sqrt{2(1+\e)\left|J\backslash J^*\right|\log\left(n^{\a}p^{1+\b}\right)}-\frac{1}{2}a_3^2\cdot n\l_2^*\\
&=-n\left(\sqrt{2(1+\e)(1+\a+\b)a_3^2 q \l_2^*}\sqrt{\frac{\log\left(n^{\a}p^{1+\b}\right)}{n}}+\frac{1}{2}a_3^2\l_2^*\right)\\
&\geq -n\left(\sqrt{2(1+\e)(1+\a+\b)a_3^2 q \l_2^*}+\frac{1}{2}a_3^2 \l_2^*\right)\;,
\end{align*}
for sufficiently large $n$, since $\log\left(n^{\a}p^{1+\b}\right)=\mathbf{o}(n)$.

Next, we consider $L_{[n]}(\phatj)$, and find that since $\log L_{[n]}(\phatj)\geq \log L_{[n]}(\mathbf{0})$ by definition, this results in a bound on $\|\phatj\|_2$. Fix any $J$ with $|J|\leq q$. If $\|\phatj-\phi^*\|_2\leq 1$, then $\|\phatj\|_2\leq 1+\|\phi^*\|_2\leq 1+a_3\leq R$. Now consider the case that $\|\phatj-\phi^*\|_2\geq 1$.  Applying (\ref{eq:phi_dist}) to the model $J'\coloneqq J\cup J^*$ with $\psi_{J'}\coloneqq \phatj-\phi^*$, we obtain
\begin{align*}
\log L_{[n]}(\phi^*+(\phatj-\phi^*))-\log L_{[n]}(\phi^*)&\leq-\frac{\l_1^*n}{2}\|\phatj-\phi^*\|_2\left(\min\left\{1,\|\phatj-\phi^*\|_2\right\}-\tau\sqrt{\frac{\log\left(n^{\a}p^{1+\b}\right)}{n}}\right)\\
&=-\frac{\l_1^*n}{2}\|\phatj-\phi^*\|_2\left(1-\tau\sqrt{\frac{\log\left(n^{\a}p^{1+\b}\right)}{n}}\right)\\
&\leq-\frac{\l_1^*n}{4}\|\phatj-\phi^*\|_2\;,
\end{align*}
for sufficiently large $n$, since $\log(p)=\mathbf{o}(n)$. Combining these results, we obtain 
\begin{align*}
-n\left(\sqrt{2(1+\e)(1+\a+\b)a_3^2 q \l_2^*}+\frac{1}{2}a_3^2\l_2^*\right)&\leq \log L_{[n]}(\mathbf{0})-\log L_{[n]}(\phi^*)\\
&\leq \log L_{[n]}(\phatj)-\log L_{[n]}(\phi^*)\\
&\leq -\frac{\l_1^*n}{4} \|\phatj-\phi^*\|_2\;.\end{align*}
Therefore,
$$\|\phatj\|_2\leq \|\phi^*\|_2+\|\phatj-\phi^*\|_2\leq a_3+4(\l_1^*)^{-1}\left(\sqrt{2(1+\e)(1+\a+\b)a_3^2 q \l_2^*}+\frac{1}{2}a_3^2\l_2^*\right)\leq R\;.$$

Finally, define
$\lambda_k=\beta_k(R+1)$ for $k=1,2,3$. As in Section~\ref{appendix:score}, we apply  Lemma~\ref{lem:Hessian} and see that with probability at  least $1-\frac{1}{3}n^{-\a}p^{-\b} -2K^{K+1}pn^{-\nicefrac{K}{2}}$ under (A1) or  with probability at least $1-\frac{1}{3}n^{-\a}p^{-\b} -2K^{K+1}pn^{-\nicefrac{K}{2}}$ under (A2),
\begin{align*}
&\l_1\mathbf{I}_J\preceq\tfrac{1}{n}H_J(\phi_J)\preceq \l_2\mathbf{I}_J\text{ for all $|J|\leq 2q$ and all $\|\phi_J\|_2\leq R+1$,}\\
&\text{and }\tfrac{1}{n}\left(H_J(\phi_J)-H_J(\phi'_J)\right)\preceq \|\phi_J-\phi'_J\|_2\cdot \l_3\mathbf{I}_J\text{ for all $|J|\leq 2q$ and all $\|\phi_J\|_2,\|\phi'_J\|_2\leq R+1$.}
\end{align*}

\section{Proof of Lemma~\ref{lem:Hessian}}\label{appendix:Hessian}

We now prove the bounds on the Hessian.
\begin{replemma}{lem:Hessian}
Fix any radius $r>0$. There exist finite positive constants $c$, $\b_1=\b_1(r)$, $\b_2=\b_2(r)$, and $\b_3=\b_3(r)$ such that
\begin{align*}
&\b_1\mathbf{I}_J\preceq\tfrac{1}{n}H_J(\phi_J)\preceq \b_2\mathbf{I}_J\text{ for all $|J|\leq 2q$ and all $\|\phi_J\|_2\leq r$,}\\
&\text{and }\tfrac{1}{n}\left(H_J(\phi_J)-H_J(\phi'_J)\right)\preceq \|\phi_J-\phi'_J\|_2\cdot \b_3\mathbf{I}_J\text{ for all $|J|\leq 2q$ and all $\|\phi_J\|_2,\|\phi'_J\|_2\leq r$,}
\end{align*}
 with probability at least $1-p^{2q}e^{-cn}$ under (A1) or  with probability at least $1-2K^{K+1}pn^{-\nicefrac{K}{2}}-p^{2q}e^{-cn}$ under (A2).
\end{replemma}
\begin{proof} Under (A1), $\EE\left[\left|X_{1j}\right|^4\right]\leq \AA^4$, while under (A2), $\EE\left[\left|X_{1j}\right|^4\right]\leq \AA_K^{\nicefrac{2}{(3K)}}$. Define $m$ to equal $\AA^4$ or $\AA_K^{\nicefrac{2}{(3K)}}$, as appropriate. By Lemma~\ref{lem:pos_def_H} below, with probability at least $1-{p\choose 2q}e^{- \left(150\cdot \left\lceil80q^2ma_1^{-2}\right\rceil\right)^{-1}n}$, for all $J$ with $|J|=2q$ and for all $\phi$ with $\|\phi\|_2\leq r$,
$$H_J(\phi)\succeq n\mathbf{I}_J\cdot \frac{a_1}{4} \inf\left\{\bb''(\theta):|\theta|\leq 20q^2r\sqrt{m}\left\lceil 80q^2ma_1^{-2}\right\rceil\right\}\;.$$

Now we show an upper bound and bound the difference. By Lemma~\ref{lem:upper_bound_H} below, with probability one under (A1) or with probability at least $1-2K^{K+1}pn^{-\nicefrac{K}{2}}$ under (A2), for all $J$ with $|J|\leq 2q$ and all $\phi_J,\phi'_J$ with $\|\phi_J\|_2,\|\phi'_J\|_2\leq r$,
$$H_J(\phi_J)-H_J(\phi'_J)\preceq \|\phi_J-\phi'_J\|_2\cdot C_1(r)\cdot n\mathbf{I}_J\;.$$
In particular, this implies that
$$H_J(\phi_J)=H_J(\phi_J)-H_J(\mathbf{0}_J)\preceq \|\phi_J-\mathbf{0}_J\|_2\cdot C_1(r)\cdot n\mathbf{I}_J\preceq  rC_1(r)\cdot n\mathbf{I}_J\;.$$
Let $\b_1(r)\coloneqq \frac{a_1}{4} \inf\left\{\bb''(\theta):|\theta|\leq 20q^2r\sqrt{m}\left\lceil 80q^2ma_1^{-2}\right\rceil\right\}$, and $\b_2(r),\b_3(r)\coloneqq C_1(r)$. This proves the claim.
\end{proof}

\subsection{Bounding the change in the Hessian when $x$'s are subgaussian}

\begin{lemma}\label{lem:upper_bound_H}
For any radius $r>0$, there exists finite  $C_1=C_1(r)$ such that for any sample under (A1), or with probability at least $1-2K^{K+1}pn^{-\nicefrac{K}{2}}$ under (A2), for all $J$ with $|J|\leq 2q$, for all $\phi_J,\phi'_J$ with $\|\phi_J\|_2,\|\phi'_J\|\leq r$,
$$\tfrac{1}{n}\left(H_J(\phi_J)-H_J(\phi'_J)\right)\preceq \|\phi_J-\phi'_J\|_2 C_1\mathbf{I}_J\;.$$
\end{lemma}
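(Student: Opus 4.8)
The plan is to reduce the Loewner bound to a single scalar inequality, and then to control that scalar uniformly over all $J$ at the cost of only a factor $p$ in the failure probability. First I would pass from the matrix difference to its largest eigenvalue. Writing the GLM Hessian as $H_J(\phi_J)=\sum_i X_{iJ}X_{iJ}^T\bb''(X_{iJ}^T\phi_J)$, a first-order Taylor expansion of $\bb''$ gives, for any unit vector $u\in\R^J$ and some $\xi_i$ between $X_{iJ}^T\phi_J$ and $X_{iJ}^T\phi'_J$,
\[
u^T\left(H_J(\phi_J)-H_J(\phi'_J)\right)u=\sum_i (X_{iJ}^Tu)^2\,\bb'''(\xi_i)\,X_{iJ}^T(\phi_J-\phi'_J)\;.
\]
Since $\|\phi_J\|_2,\|\phi'_J\|_2\le r$ forces $|\xi_i|\le r\|X_{iJ}\|_2$, using $(X_{iJ}^Tu)^2\le\|X_{iJ}\|_2^2$ and $|X_{iJ}^T(\phi_J-\phi'_J)|\le\|X_{iJ}\|_2\|\phi_J-\phi'_J\|_2$ yields
\[
\lambda_{\max}\!\left(\tfrac1n\left(H_J(\phi_J)-H_J(\phi'_J)\right)\right)\le \|\phi_J-\phi'_J\|_2\cdot S_J,\qquad S_J:=\frac1n\sum_i\|X_{iJ}\|_2^3\sup_{|\theta|\le r\|X_{iJ}\|_2}\left|\bb'''(\theta)\right|\;.
\]
So it suffices to bound $S_J$ by a constant $C_1(r)$ simultaneously for all $|J|\le 2q$.

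Second, I would make this uniform over $J$ by exploiting monotonicity. Since $\|X_{iJ}\|_2\le\sqrt{2q}\max_{j\in J}|X_{ij}|$ and $\sup_{|\theta|\le t}|\bb'''(\theta)|$ is non-decreasing in $t$, each summand of $S_J$ is at most $(2q)^{3/2}\max_{j\in J}g_{ij}$, where $g_{ij}:=|X_{ij}|^3\sup_{|\theta|\le r\sqrt{2q}|X_{ij}|}|\bb'''(\theta)|$ (the maximizing coordinate is the same for $\|X_{iJ}\|_2$ and for $g_{ij}$). Bounding $\max_{j\in J}g_{ij}\le\sum_{j\in J}g_{ij}$ then converts the set-indexed statistic into a sum of coordinatewise averages: $S_J\le(2q)^{3/2}\sum_{j\in J}\bar g_j$ with $\bar g_j:=\frac1n\sum_i g_{ij}$. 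Hence on the event $\{\bar g_j\le C'\text{ for all }j\le p\}$ we obtain $S_J\le(2q)^{3/2}\cdot 2q\cdot C'=:C_1(r)$ for every $J$ with $|J|\le 2q$ at once. Under (A1), $|X_{ij}|\le\AA$ makes $g_{ij}\le\AA^3\sup_{|\theta|\le r\sqrt{2q}\AA}|\bb'''(\theta)|$ a deterministic finite constant (finiteness since $\bb$ is smooth on all of $\R$), so the bound holds for every sample.

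Third, under (A2) I would establish $\bar g_j\le C'$ for all $j$ by a per-coordinate moment bound followed by a union bound. Cauchy--Schwarz gives $\EE[g_{1j}^K]\le\sqrt{\EE[|X_{1j}|^{6K}]}\cdot\sqrt{\EE[\sup_{|\theta|\le r\sqrt{2q}|X_{1j}|}|\bb'''(\theta)|^{2K}]}\le\sqrt{\AA_K\,\BB_K(r\sqrt{2q})}$, which is precisely why (A2) requires $6K$ moments of the covariates and $2K$ moments of the third-derivative supremum; in particular $\mu_j:=\EE[g_{1j}]$ and all lower moments of $g_{1j}$ are bounded uniformly in $j$. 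Taking $C'=2\sup_j\mu_j$, Markov's inequality on the (even) $K$-th central moment gives $\PP(\bar g_j>C')\le(\sup_j\mu_j)^{-K}\EE[(\bar g_j-\mu_j)^K]$; expanding $\EE[(\sum_i(g_{ij}-\mu_j))^K]$ and using that $K$ is even and the centered summands are i.i.d.\ (so only terms in which every index is repeated survive, giving at most $K/2$ distinct indices) shows the leading contribution is $O(n^{K/2})$, whence $\EE[(\bar g_j-\mu_j)^K]=O(n^{-K/2})$ and the per-coordinate probability is at most $2K^{K+1}n^{-K/2}$. A union bound over $j\le p$ then yields total failure probability at most $2K^{K+1}pn^{-K/2}$, as claimed.

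The main obstacle is the second step: getting a bound that is uniform over all subsets $J$ while paying only a factor $p$ rather than a union bound over the roughly $p^{2q}$ choices of $J$. The Lipschitz-in-$\phi$ reduction and the per-coordinate concentration are routine once the moments of $g_{1j}$ are shown finite; the decisive observation is that the offending supremum $\sup_{|\theta|\le r\|X_{iJ}\|_2}|\bb'''(\theta)|$ is monotone in $\|X_{iJ}\|_2$, so it is governed by the single largest coordinate of $J$, which lets $S_J$ collapse to a sum of coordinatewise averages and removes any need to index the concentration argument by $J$.
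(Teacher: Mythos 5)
Your overall route is the same as the paper's: a mean-value/Taylor step reduces the matrix increment to the scalar statistic $\frac1n\sum_i\|X_{iJ}\|_2^3\sup_{|\theta|\le r\|X_{iJ}\|_2}|\bb'''(\theta)|$, the bound $\|X_{iJ}\|_2\le\sqrt{2q}\max_{j\in J}|X_{ij}|$ together with monotonicity of the supremum collapses this to coordinatewise averages (so uniformity over all $|J|\le 2q$ costs only a union bound over $[p]$, not over the $\sim p^{2q}$ subsets), and a $K$-th moment Markov bound per coordinate finishes, with (A1) handled deterministically. The only substantive difference is cosmetic: the paper splits the product $\|X_{iJ}\|_2^3\,|\bb'''|$ by the inequality $ab\le\tfrac12(a^2+b^2)$ and controls $\sum_i|X_{ij}|^6$ and $\sum_i\sup|\bb'''|^2$ separately ($2p$ events), whereas you keep the product intact as $g_{ij}$ and use Cauchy--Schwarz on its $K$-th moment ($p$ events), which would even improve the factor $2p$ to $p$.

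There is, however, one step that fails as written: the concentration step with threshold $C'=2\sup_j\mu_j$. Markov's inequality there gives $\PP\left(\bar g_j>C'\right)\le \EE\left[(\bar g_j-\mu_j)^K\right]/(\sup_k\mu_k)^K$, and the numerator is of order $K^{K+1}n^{-K/2}\,\EE\left[|g_{1j}-\mu_j|^K\right]$; but nothing bounds the ratio $\EE\left[|g_{1j}-\mu_j|^K\right]/(\sup_k\mu_k)^K$ by a universal constant --- for a nonnegative variable the mean can be arbitrarily small relative to the $K$-th central moment (e.g.\ $g_{1j}\in\{0,B\}$ with $\PP(g_{1j}=B)$ tiny), so your claimed per-coordinate bound $2K^{K+1}n^{-K/2}$ does not follow; you only get $C\,n^{-K/2}$ with $C$ depending on these moment ratios, which is a strictly weaker statement than the lemma. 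The fix is one line and is exactly the paper's normalization: threshold relative to the moment bound rather than the mean. Set $M\coloneqq\sqrt{\AA_K\,\BB_K(r\sqrt{2q})}$, so that $\EE\left[g_{1j}^K\right]\le M$ uniformly in $j$ by your Cauchy--Schwarz step, and take $C'=2M^{\nicefrac{1}{K}}$. Jensen gives $\mu_j\le M^{\nicefrac{1}{K}}$, hence $\{\bar g_j>C'\}\subset\{|\bar g_j-\mu_j|>M^{\nicefrac{1}{K}}\}$, and Markov combined with the i.i.d.\ $K$-th moment inequality (inequality (86) of Ravikumar et al., as cited in the paper) yields
\begin{equation*}
\PP\left\{\bar g_j>C'\right\}\le \frac{n^{\nicefrac{K}{2}}\left(\nicefrac{K}{2}\right)^{K+1}2^K\left(\EE\left[g_{1j}^K\right]+\mu_j^K\right)}{n^K M}\le \left(\nicefrac{K}{2}\right)^{K+1}2^{K+1}n^{-\nicefrac{K}{2}}=K^{K+1}n^{-\nicefrac{K}{2}}\;,
\end{equation*}
so the union bound gives total failure probability at most $pK^{K+1}n^{-\nicefrac{K}{2}}\le 2pK^{K+1}n^{-\nicefrac{K}{2}}$. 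With that substitution (and $C_1(r)=(2q)^{\nicefrac{5}{2}}C'$) your argument is complete.
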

\begin{proof}

For some convex combination $\phi''_J=t\phi_J+(1-t)\phi'_J$,
\begin{align*}
\left\|H_J(\phi_J)-H_J(\phi'_J)\right\|_{\mathrm{sp}}
&\leq\left\|H_J(\phi_J)-H_J(\phi'_J)\right\|_F\\
&=\left\|\sum_i X_{iJ}X_{iJ}^T\left(\bb''(X_{i\bul}^T\phi_J)-\bb''(X_{i\bul}^T\phi'_J)\right)\right\|_F\\
&=\left\|\sum_i X_{iJ}X_{iJ}^T\cdot \bb'''(X_{i\bul}^T\phi''_J)\cdot (X_{i\bul}^T(\phi_J-\phi'_J))\right\|_F\\
&\leq\sum_i \left\|X_{iJ}X_{iJ}^T\right\|_F\cdot\left|\bb'''(X_{i\bul}^T\phi''_J)\right|\cdot \left|(X_{i\bul}^T(\phi_J-\phi'_J))\right|\\
&\leq\left\|\phi_J-\phi'_J\right\|_2\cdot\sum_i \left\|X_{iJ}\right\|^3_2\cdot \left|\bb'''(X_{i\bul}^T\phi''_J)\right|\;.
\end{align*}

Under assumption (A1), since $|X_{ij}|\leq \AA$ for all $i,j$,
$$\left\|H_J(\phi_J)-H_J(\phi'_J)\right\|_{\mathrm{sp}}\leq \left\|\phi_J-\phi'_J\right\|_2n\cdot\left( (2q)^{1.5}\AA^3\cdot\sup_{|\theta|\leq \AA\sqrt{2q}\cdot r}\left|\bb'''(\theta)\right|^2\right)\coloneqq  \left\|\phi_J-\phi'_J\right\|_2n\cdot C_1\;.$$

Now we turn to the setting of assumption (A2). 
By inequality (86) of \citet{Ravikumar:2011}, if $W_1,\dots,W_n$ are i.i.d.\ copies of a random variable $W$ with $\EE[|W|^K]\leq M$, then
\begin{align*}
\EE\Big[\Big|\sum_i W_i-\EE[W]\Big|^{K}\Big]&\leq n^{\nicefrac{K}{2}}\left(\nicefrac{K}{2}\right)^{K+1}\EE\left[\left|W-\EE[W]\right|^{K}\right]\\
&\leq  n^{\nicefrac{K}{2}}\left(\nicefrac{K}{2}\right)^{K+1}\cdot 2^{K}\left(\EE[|W|^{K}]+\left|\EE[W]\right|^{K}\right)\\&\leq n^{\nicefrac{K}{2}}K^{K+1} M\;,\end{align*}
and therefore,
\begin{align*}
\PP \Big\{\frac{1}{n}\sum_i W_i> 2M^{\nicefrac{1}{K}} \Big\}
&\leq \PP \Big\{\Big |\sum_i W_i-\EE[W] \Big |>nM^{\nicefrac{1}{K}} \Big\}\\
&= \PP \Big\{\Big |\sum_i W_i-\EE[W] \Big |^K>n^KM \Big\}\\
&\leq \frac{\EE\left[\left|\sum_i W_i-\EE[W]\right|^K\right]}{n^KM}\\
&\leq \frac{n^{\nicefrac{K}{2}}K^{K+1} M}{n^KM}\\&=K^{K+1}n^{-\nicefrac{K}{2}}\;.\end{align*}
We apply this result $2p$ times, to obtain that  with probability at least $1-2p\cdot K^{K+1}n^{-\nicefrac{K}{2}}$, for all $j\in[p]$,
$$\sum_i |X_{ij}|^6\leq 2n\AA_K^{\nicefrac{1}{K}}\;, \text{ and }\sum_i  \sup_{|\theta|\leq r\sqrt{2q}|X_{ij}|}\left|\bb'''(\theta)\right|\leq 2n\BB_K(r\sqrt{2q})^{\nicefrac{1}{K}}\;.$$
Now assume that both of these bounds hold for every $j$.
Then, for each $|J|\leq 2q$,
$$\sum_i \|X_{iJ}\|^6_2\leq (2q)^3\max_{j\in J}\sum_i \left|X_{ij}\right|^6\leq (2q)^3\cdot 2n \AA_K^{\nicefrac{1}{K}}\;.$$
Finally, for each $|J|\leq 2q$, observe that for each $i$, $\left|X_{i\bul}^T\phi''_J\right|\leq r\|X_{iJ}\|_2\leq r\sqrt{2q}\max_{j\in J}|X_{ij}|$, and so
$$\sum_i \left|\bb'''(X_{i\bul}^T\phi''_J)\right|^2\leq \max_{j\in J} \sum_i  \sup_{|\theta|\leq r\sqrt{2q}|X_{ij}|}\left|\bb'''(\theta)\right|\leq 2n\BB_K\left(r\sqrt{2q}\right)^{\nicefrac{1}{K}}\;.$$
Therefore,\begin{align*}
\left\|H_J(\phi_J)-H_J(\phi'_J)\right\|_{\mathrm{sp}}&\leq\left\|\phi_J-\phi'_J\right\|_2\cdot\sum_i \left\|X_{iJ}\right\|^3_2\cdot \left|\bb'''(X_{i\bul}^T\phi''_J)\right|\\
&\leq\left\|\phi_J-\phi'_J\right\|_2\cdot\frac{1}{2}\sum_i \left(\left\|X_{iJ}\right\|^6_2+ \left|\bb'''(X_{i\bul}^T\phi''_J)\right|^2\right)\\
&\leq \left\|\phi_J-\phi'_J\right\|_2n\cdot \left((2q)^3 \AA_K^{\nicefrac{1}{K}}+\BB_K\left(r\sqrt{2q}\right)^{\nicefrac{1}{K}}\right)\\&\coloneqq \left\|\phi_J-\phi'_J\right\|_2n\cdot C_1\;.\qedhere\end{align*}
\end{proof}

\subsection{Positive definite Hessian}
We now show that, under mild assumptions, the Hessian of the negative log-likelihood will be positive definite with its smallest eigenvalue bounded away from zero.

\begin{replemma}{lem:pos_def_H}
Fix $J$ with $|J|=2q$, and radius $R>0$. Assume
 $\lambda_{\min}\left(\EE\left[X_{1J}^{}X_{1J}^T\right]\right)\geq a_1>0$ and
 $\sup_{j\in J} \EE\left[\left|X_{1j}\right|^4\right]\leq m$.
If $n$ is sufficiently large, then with probability at least $1-e^{-\left(150\cdot \left\lceil80q^2ma_1^{-2}\right\rceil\right)^{-1}n}$, for all $\phi=\phi_J$ with $\|\phi\|_2\leq r$, 
$$H_J(\phi)\succeq n\mathbf{I}_J\cdot \frac{a_1}{4} \inf\left\{\bb''(\theta):|\theta|\leq 20q^2r\sqrt{m}\left\lceil 80q^2ma_1^{-2}\right\rceil\right\}\;.$$
\end{replemma}

We first give a brief intuition for the proof. We have $H_J(\phi)=\sum_i X_{iJ}X_{iJ}^T\cdot\bb''(X_{i\bul}^T\phi)$.
Due to the moment condition on the covariates, we know that $\sum_i X_{iJ}X_{iJ}^T$ will be approximately equal to $n\EE\left[X_{1J}^{}X_{1J}^T\right]\succeq n\cdot a_1\mathbf{I}_J$. However, this is not sufficient, because for some $i$, we might have very small values of $\bb''(X_{i\bul}^T\phi)$. Instead, we consider only those $i$ for which $\bb''(X_{i\bul}^T\phi)$ satisfies some lower bound. By considering the sum of $X_{iJ}X_{iJ}^T$ over this subset of the $i$'s, we will obtain the desired result.

\begin{proof}
From the assumptions, for all $j,k\in J$,
$$\mathrm{Var}\left(X_{1j}X_{1k}\right)\leq \EE\left[\left|X_{1j}\right|^2\left|X_{1k}\right|^2\right]\leq \frac{1}{2}\EE\left[\left|X_{1j}\right|^4+\left|X_{1k}\right|^4\right]\leq m\;.$$
 Let $N=\left\lceil 80q^2ma_1^{-2}\right\rceil$, and let $n'=\left\lfloor \frac{n}{2N}\right\rfloor$. Then
$$N\geq \frac{20\sum_{j,k\in J}\mathrm{Var}\left(X_{1j}X_{1k}\right)}{\lambda^2_{\min}\left(\EE\left[X_{1J}^{}X_{1J}^T\right]\right)}\;.$$

For each $i_0=N,2N,3N,\dots,(2n')N$, define matrix $\mathbf{M}^{(i_0)}\in\R^{J\times J}$ as
$$\mathbf{M}^{(i_0)}_{jk}=\frac{1}{N}\sum_{i=i_0-(N-1)}^{i_0}X_{ij}X_{ik}-\EE\left[X_{1j}X_{1k}\right]\;,$$
and define events
\begin{align*}
&E^{(i_0)}=\left\{\|\mathbf{M}^{(i_0)}\|_{\mathrm{sp}}\leq \frac{1}{2}\lambda_{\min}\left(\EE\left[X_{1J}^{}X_{1J}^T\right]\right)\right\}\;,\\
&F^{(i_0)}=\left\{X_{ij}^2\leq 10q\sqrt{m}N \text{ for all } j\in J, i=i_0-(N-1),\dots,i_0\right\}\;.
\end{align*}
Define also positive constant
$$b_0=\inf_{|\theta|\leq 20q^2r\sqrt{m}N}\bb''(\theta)\;.$$

Below we show that, for the fixed choice of $J$, with probability at least $1-e^{-\left(150\cdot \left\lceil80q^2ma_1^{-2}\right\rceil\right)^{-1}n}$,
$$\#\left\{i_0\in \left\{N,2N,3N,\dots,(2n')N\right\} \ : \ E^{(i_0)}\cap F^{(i_0)}\right\}\geq \tfrac{n}{2N}\;.$$

Now suppose that this is true. Take any $i_0$ such that $E^{(i_0)}$ and $F^{(i_0)}$ both occur. Then, by definition of $E^{(i_0)}$,
\begin{align*}
&\frac{1}{N}\sum_{i=i_0-(N-1)}^{i_0}X_{ij}X_{ik}
=\mathbf{M}^{(i_0)}+\EE\left[X_{1J}^{}X_{1J}^T\right]=\mathbf{M}^{(i_0)}+\EE\left[X_{1J}\right]\EE\left[X_{1J}\right]^T+Cov(X_{1J})\\
&\succeq \mathbf{M}^{(i_0)}+Cov(X_{1J})\succeq \left(\lambda_{\min}(Cov(X_{1J}))-\|\mathbf{M}^{(i_0)}\|_{\mathrm{sp}}\right)\mathbf{I}_J
\succeq \frac{1}{2}\lambda_{\min}(Cov(X_{1J}))\mathbf{I}_J=\frac{a_1}{2}\mathbf{I}_J\;.
\end{align*}
And, by definition of $F^{(i_0)}$, for all $\phi$ with $\mathit{Support}(\phi)=J$ and $\|\phi\|_2\leq r$,
$$\bb''(X_{i\bul}^T\phi)\geq b_0 \text{ for all } i=i_0-(N-1),\dots,i_0\;.$$
Therefore, for all $\phi$ with $\mathit{Support}(\phi)=J$ and $\|\phi\|_2\leq r$,
\begin{align*}
H_J(\phi)=\sum_{i=1}^nX_{iJ}X_{iJ}^T\bb''(X_{i\bul}^T\phi)
&=\sum_{i_0=N,\dots,(2n')N}\sum_{i=i_0-(N-1)}^{i_0}X_{iJ}X_{iJ}^T\bb''(X_{i\bul}^T\phi)\\
&\succeq\sum_{i_0:E^{(i_0)},F^{(i_0)}}\sum_{i=i_0-(N-1)}^{i_0}X_{iJ}X_{iJ}^T\bb''(X_{i\bul}^T\phi)\\
&\succeq b_0\sum_{i_0:E^{(i_0)},F^{(i_0)}}\sum_{i=i_0-(N-1)}^{i_0}X_{iJ}X_{iJ}^T\\
&\succeq\sum_{i_0:E^{(i_0)},F^{(i_0)}} \frac{b_0a_1N}{2}\mathbf{I}_J\\
&\succeq \frac{n}{2N}\cdot \frac{b_0a_1N}{2}\mathbf{I}_J= \frac{b_0a_1}{4}\cdot n\mathbf{I}_J\;.
\end{align*}
\medskip

It remains to be shown that
$$\#\left\{i_0\in \left\{N,2N,3N,\dots,(2n')N\right\} \ : \ E^{(i_0)}\cap F^{(i_0)}\right\}\geq \tfrac{n}{2N}\;.$$
We will do this by showing that (for each $i_0$) $\PP\left\{E^{(i_0)}\right\}\geq 0.8$ and $\PP\left\{E^{(i_0)}\right\}\geq 0.8$.

Fix any  $i_0\in\{1,\dots,(2n')\}$. First, we treat the event $E^{(i_0)}$. By the definition of $N$, we have
\begin{align*}
\EE\left[\sum_{j,k\in J}\left(\frac{1}{N}\sum_{i=i_0-(N-1)}^{i_0}X_{ij}X_{ik}-\EE\left[X_{1j}X_{1k}\right]\right)^2\right]
&=\sum_{j,k\in J}\mathrm{Var}\left(\frac{1}{N}\sum_{i=i_0-(N-1)}^{i_0}X_{ij}X_{ik}\right)\\
&=\frac{1}{N}\sum_{j,k\in J}\mathrm{Var}\left(X_{1j}X_{1k}\right)\\
&\leq \frac{1}{20}\lambda_{\min}^2\left(\EE\left[X_{1J}^{}X_{1J}^T\right]\right)\;.
\end{align*}
Next, we define matrix $\mathbf{M}^{(i_0)}\in\R^{J\times J}$ as
$\mathbf{M}^{(i_0)}_{jk}=\frac{1}{N}\sum_{i=i_0-(N-1)}^{i_0}X_{ij}X_{ik}-\EE\left[X_{1j}X_{1k}\right]$.
We have, by Markov's inequality, since $\|\mathbf{M}^{(i_0)}\|_{\mathrm{sp}}\leq \|\mathbf{M}^{(i_0)}\|_F$,
\begin{align*}
 &\PP\left\{\|\mathbf{M}^{(i_0)}\|_{\mathrm{sp}}> \frac{1}{2}\lambda_{\min}\left(\EE\left[X_{1J}^{}X_{1J}^T\right]\right)\right\}\\
 &\leq \PP\left\{\|\mathbf{M}^{(i_0)}\|^2_F> \frac{1}{4}\lambda^2_{\min}\left(\EE\left[X_{1J}^{}X_{1J}^T\right]\right)\right\}\\
&=\PP\left\{\sum_{j,k\in J}\left(\frac{1}{N}\sum_{i=i_0-(N-1)}^{i_0}X_{ij}X_{ik}-\EE\left[X_{1j}X_{1k}\right]\right)^2> \frac{1}{4}\lambda^2_{\min}\left(\EE\left[X_{1J}^{}X_{1J}^T\right]\right)\right\}\leq \frac{1}{5}\;.
\end{align*}
So, $\PP\left\{E^{(i_0)}\right\}\geq 0.8$. 

Next we consider $F^{(i_0)}$. For all $j$, by Markov's inequality,
$$\PP\left\{\left|X_{1j}\right|^2>10q\sqrt{m}N\right\}\leq \frac{\EE\left[\left|X_{1j}\right|^2\right]}{10q\sqrt{m}N}\leq \frac{\sqrt{\EE\left[\left|X_{1j}\right|^4\right]}}{10q\sqrt{m}N}\leq (10qN)^{-1}\;$$
Then 
\begin{align*}
&\PP\left\{\left(F^{(i_0)}\right)^c\right\}= \PP\left\{\exists i\in\{i_0-(N-1),\dots,i_0\},j\in J, \text{ s.t. }X_{ij}^2>10q\sqrt{m}N\right\}\\
&\leq \sum_{i=i_0-(N-1)}^{i_0}\sum_{j\in J}\PP\left\{X_{ij}^2>10q\sqrt{m}N\right\}\leq 2qN\cdot (10qN)^{-1}=0.2\;.\end{align*}

Finally, for each $i_0=N,2N,3N,\dots,(2n')N$,
$$\PP\left\{E^{(i_0)}\cap F^{(i_0)}\right\}\geq 1-\PP\left\{\left(E^{(i_0)}\right)^c\right\}-\PP\left\{\left(F^{(i_0)}\right)^c\right\}\geq 0.6\;.$$
By the Chernoff bound, for sufficiently large $n$ (so that the relative difference between $\frac{n}{2N}$ and $n'=\left\lfloor\frac{n}{2N}\right\rfloor$ is sufficiently small),
\begin{align*}
\PP\left\{\#\{i_0:E^{(i_0)}\cap F^{(i_0)}\}<\frac{n}{2N}\right\}
&\leq \PP\left\{\mathrm{Binomial}\left(2n',0.6\right)<0.6\cdot\frac{n}{N}\cdot \left(1-\frac{1}{6}\right)\right\}\\
&\leq \PP\left\{\mathrm{Binomial}\left(2n',0.6\right)<0.6(2n')\cdot \left(1-\frac{1}{6.5}\right)\right\}\\
&\leq \exp\left\{-0.6(2n')\cdot \frac{\left(\frac{1}{6.5}\right)^2}{2}\right\}\\
&\leq  \exp\left\{-n\cdot (150N)^{-1}\right\}\;.
\end{align*}
So, for a fixed $J$ with $|J|=2q$,  with probability at least $1-e^{-\left(150\cdot \left\lceil80q^2ma_1^{-2}\right\rceil\right)^{-1}n}$,
$$\#\{i_0:E^{(i_0)}\cap F^{(i_0)}\}\geq 0.5\frac{n}{N}\;.\qedhere$$
\end{proof}

\end{document}